\titleformat{\paragraph}[runin]
  {\normalfont\normalsize\bfseries}{\theparagraph}{1em}{}
\titleformat{\subparagraph}[runin]
  {\normalfont\normalsize\bfseries}{\thesubparagraph}{1em}{}
\newtheorem{theorem}{Theorem}[section]
\newtheorem{corollary}[theorem]{Corollary}
\newtheorem{lemma}[theorem]{Lemma}
\newtheorem{proposition}[theorem]{Proposition}
\renewenvironment{proof}[1][Proof]{\noindent\textbf{#1.} }{\ \rule{0.5em}{0.5em}}
\theoremstyle{definition}
\newtheorem{definition}{Definition}[section]
\newtheorem{remark}[definition]{Remark}
\newcommand{\R}{\mathbb{R}}
\newcommand{\Z}{\mathbb{Z}}
\def\defn#1{{\bf\itshape #1}}
\pgfplotsset{compat=1.13}
\definecolor{darktangerine}{rgb}{1.0, 0.66, 0.07}
\definecolor{iris}{rgb}{0.35, 0.31, 0.81}
\definecolor{asparagus}{rgb}{0.53, 0.66, 0.42}
\tikzset{hidden/.style = {thick, dashed}}
\begin{document}
\title{Braids of the N-body problem by cabling a body in a central configuration}
\author{Marine Fontaine and Carlos Garc\'{\i}a-Azpeitia}
\date{}
\maketitle

\begin{center}
\begin{minipage}[]{14cm}
	\small \textbf{Abstract.} We prove the existence of periodic solutions of the $N=(n+1)$-body problem starting with $n$ bodies whose reduced motion is close to a non-degenerate central configuration and replacing one of them by the center of mass of a pair of bodies rotating uniformly. When the motion takes place in the standard
Euclidean plane, these solutions are a special type of braid solutions obtained numerically by C. Moore. The proof uses blow-up techniques to separate the problem into the $n$-body problem, the Kepler problem, and a coupling which is small if the distance of the pair is small. The formulation is variational and the result is obtained by applying a Lyapunov-Schmidt reduction and by using the equivariant Lyusternik-Schnirelmann category.

\vspace{0.2cm}

\vspace{0.2cm}
\noindent \textbf{Keywords.} $N$-body problem, periodic solutions, perturbation theory.
\end{minipage} 
\end{center}

\section{Introduction}

The discovery of braids and choreographies are linked since the appearance of
the original work \cite{Mo93} which contains the first choreography solution
differing from the classical Lagrange circular one. In this choreography,
three bodies follow one another along the now famous figure-eight orbit. The
result was obtained numerically by finding minimisers of the classical Euler
functional with a topological constraint associated with a braid. Later on,
the first rigorous mathematical proof of the existence of the figure-eight
orbit was obtained in \cite{ChMo00} by minimising the Euler functional over
paths that connect a collinear and an isosceles triangle configuration.
However, the name \emph{choreography} was adopted after the numerical work
\cite{Si00} to describe $n$ masses that follow the same path. The study of
choreographies has attracted much attention in recent years, while the study
of braids has been relatively less explored. The purpose of our paper is to
obtain new results on the existence of braids by cabling of central
configurations (Figure \ref{figure solution}).

\begin{figure}[t]
\captionsetup{width=.9\linewidth}
\par
\begin{center}%
\begin{tabular}
[c]{cc}%
\includegraphics[height=6cm]{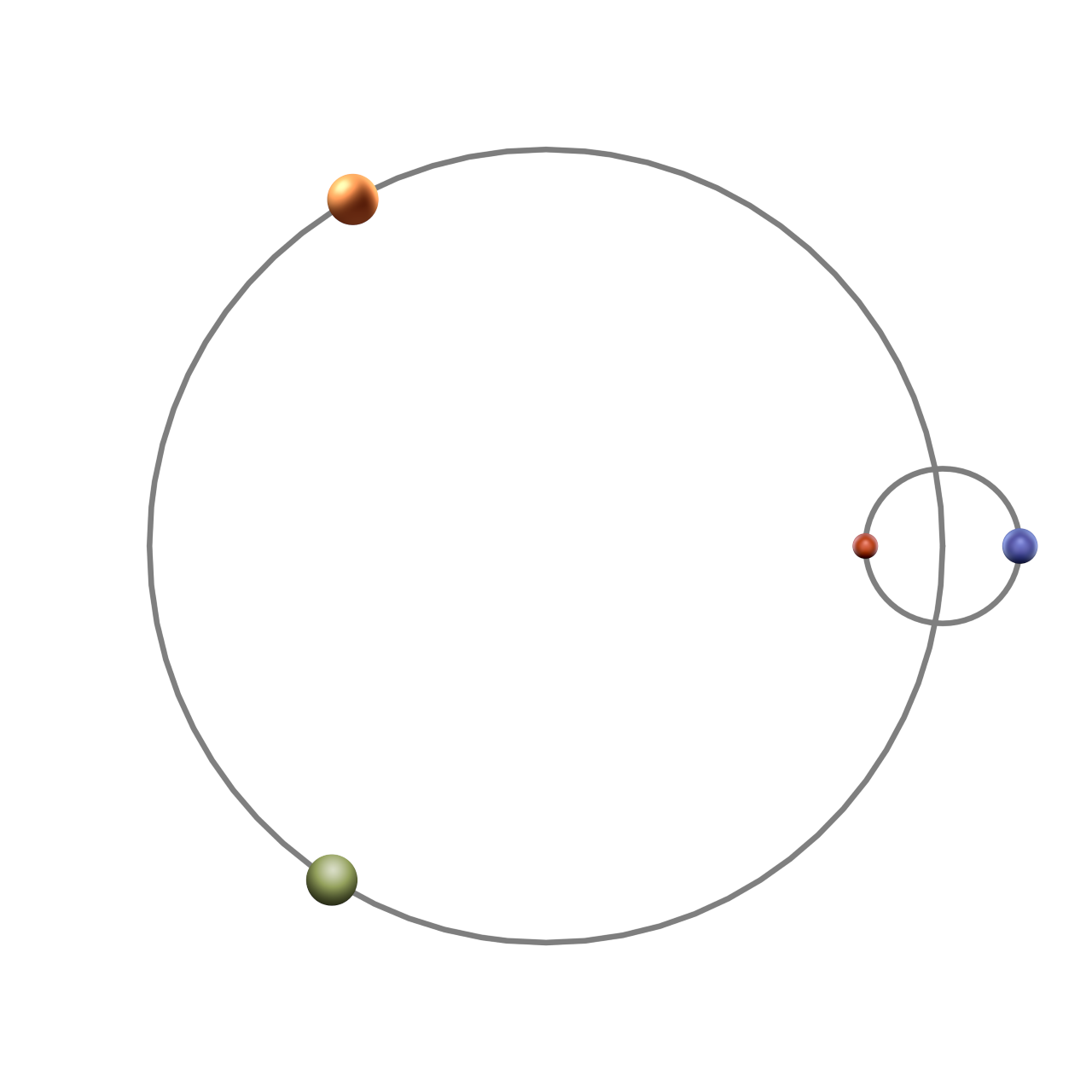} &
\includegraphics[height=6cm]{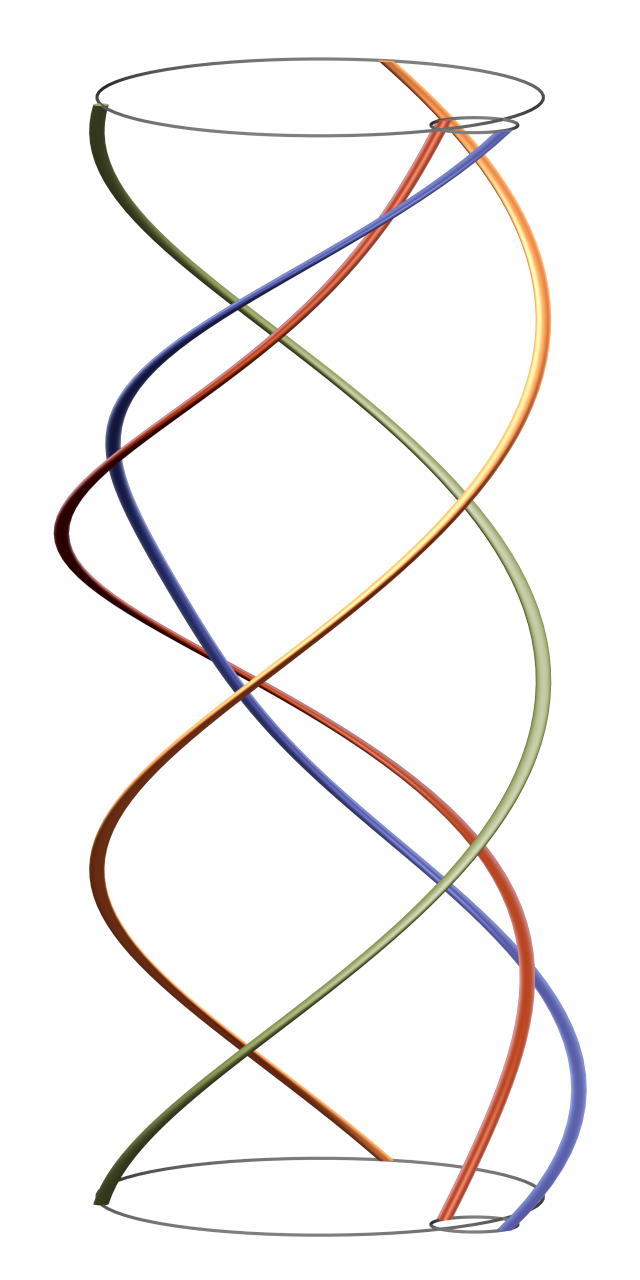}
\end{tabular}
\end{center}
\caption{ The left picture illustrates the orbit of a solution for $4$ bodies
in the plane ($d=1$). The right picture shows the same solution as a braid
solution for $4$ steady vortex filaments. The pair of bodies (red and blue)
wind around their center of mass two times while the other bodies (green and
yellow) and the center of mass of the pair wind around the origin one time. }%
\label{figure solution}%
\end{figure}

Concretely, we investigate the motion of $n$ bodies interacting under a
general homogeneous potential. The motion takes place in an even dimensional
Euclidean space $E$ equipped with a compatible complex structure $\mathcal{J}%
$. Denote by $Q_{\ell}(t)\in E$ the position of the $\ell$th body at time $t$
and let $M_{\ell}>0$ be its mass. Newton's equations are given by
\begin{equation}
M_{\ell}\ddot{Q}_{\ell}=-\sum_{k\neq\ell}M_{\ell}M_{k}\frac{Q_{\ell}-Q_{k}%
}{\left\Vert Q_{\ell}-Q_{k}\right\Vert ^{\alpha+1}}\text{,}\quad\ell=1,\dots,n
\label{body}%
\end{equation}
where $\alpha\geq1$. The case $\alpha=2$ corresponds to the problem of $n$
bodies moving under the influence of the gravitation. A central configuration
$a\in E^{n}$ is a configuration which gives rise to a solution of the form
$Q(t)=\exp(t\mathcal{J})a$. We construct braids of the $N=n+1$-body problem
starting with a central configuration $a$ of $n$ bodies. Without loss of
generality we may assume that $M_{1}=1$. The main idea is to replace one body
$Q_{1}$ by the center of mass of a pair of bodies $q_{0},q_{1}$ rotating
uniformly, with masses $m_{0},m_{1}>0$ such that $m_{0}+m_{1}=1$. We assume
that the central configuration $a$ is non-degenerate (definition
\ref{non degenerate cc}). The non-degeneracy of the Lagrange triangular
configuration and the Maxwell configuration (consisting of a central body and
$n$-bodies of equal masses attached to the vertices of a regular polygon)
follows a consequence of the stability analysis in
\cite{Mo92,MeSc93,Ro00,GaIz13} except for a finite number of mass parameters.

Specifically, our \textbf{main results} (Theorems \ref{main result} and
\ref{main result copy(1)}) state that, when the central configuration
$a=(a_{1},\dots,a_{n})$ is non-degenerate, there exists $\varepsilon_{0}>0$
such that, for all $\varepsilon\in(0,\varepsilon_{0})$, Newton's equations of
the $N=n+1$-body problem admit at least two solutions $q(t)=(q_{0}%
(t),\dots,q_{n}(t))$ such that%
\begin{align}
q_{0}(t) &  =\exp(t\mathcal{J})u_{1}(\nu t)-m_{1}\varepsilon\exp
(t\omega\mathcal{J})u_{0}(\nu t)\label{Nbody}\\
q_{1}(t) &  =\exp(t\mathcal{J})u_{1}(\nu t)+m_{0}\varepsilon\exp
(t\omega\mathcal{J})u_{0}(\nu t)\nonumber\\
q_{\ell}(t) &  =\exp(t\mathcal{J})u_{\ell}(\nu t),\qquad\ell=2,...,n,\nonumber
\end{align}
where the components $u_{\ell}=a_{\ell}+\mathcal{O}(\varepsilon)$ are $2\pi
$-periodic paths in $E$, $a_{0}\in E$ is a vector of unit length,
$\mathcal{O}(\varepsilon)$ is $2\pi$-periodic of order $\varepsilon$ with
respect to a Sobolev norm, and $\nu$ and $\omega$ are functions of
$\varepsilon$ through the relations $\omega=\pm\varepsilon^{-(\alpha+1)/2}$
and $\nu=\omega-1$. The sign of the frequency $\omega$ represents whether the
binary pair has a prograde or retrograde rotation. That is, \emph{prograde}
($\omega>0$) refers to the case that the pair rotates in the same direction as
the relative equilibrium, while \emph{retrograde} ($\omega<0$) refers to the
case that the pair rotates in the opposite direction. These solutions are
quasi-periodic if $\omega\notin\mathbb{Q}$, and periodic if $\omega
\in\mathbb{Q}$.

When $E$ is the plane and the frequency $\omega=\pm p/q$ is rational, there
is, for any fixed integer $q\in\mathbb{Z}\setminus\{0\}$, some $p_{0}>0$ such
that, for each $p>p_{0}$ , the components $q_{\ell}(t)$ of \eqref{Nbody} are
$2\pi q$-periodic. In these solutions $n-1$ bodies (close to $a_{\ell}$ for
$\ell=2,...,n$) and the center of mass of the pair $q_{0},q_{1}$ (close to
$a_{1}$) wind around the origin $q$ times, while the bodies $q_{0},q_{1}$ wind
around their center of mass $p$ times (see Corollary \ref{main braids} and
Figure \ref{figure solution}). These solutions are called \textbf{\itshape
braid solutions} in \cite{Mo93} and the process of replacing a body by a pair
is called \defn{cabling}. In the braid formalism this means replacing a strand
of a braid by another braid. For example, in Figure \ref{figure cabling}, the
rigid motion obtained by rotating the central configuration of three equal
masses located at the vertices of an equilateral triangle corresponds to the
braid $b_{1}$, and this motion is $2\pi$-periodic. Replacing one of the bodies
by the center of mass of two bodies rotating around their center of mass two
times after a complete period of $2\pi$ amounts to perform the cabling of the
braid $b_{1}$ with the braid of two strands $b_{2}$. The result is a new braid
$b_{1}\odot b_{2}$ with four strands.

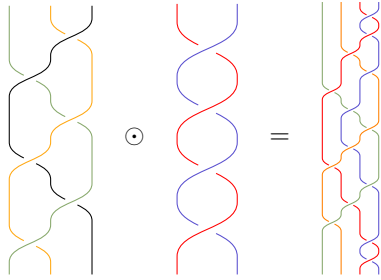
\begin{figure}[t]
\captionsetup{width=.9\linewidth}
\par
\begin{center}%
\begin{tabular}
[c]{ccccc}%
\begin{tikzpicture}[scale=0.55] \braid[style strands={1}{asparagus}, style strands={2}{darktangerine}, style strands={3}{black}] s_2^{-1} s_1^{-1}s_2^{-1} s_1^{-1}s_2^{-1} s_1^{-1}; \end{tikzpicture} &
\raisebox{50pt}{$\odot$} &
\begin{tikzpicture}[scale=0.8] \braid[style strands={1}{red}, style strands={2}{iris}, style strands={3}{green}] s_1^{-1}s_1^{-1}s_1^{-1}s_1^{-1}; \end{tikzpicture} &
\raisebox{50pt}{$=$} &
\begin{tikzpicture}[scale=0.25] \braid[style strands={1}{asparagus}, style strands={2}{orange}, style strands={3}{red}, style strands={4}{iris}] s_3^{-1}s_3^{-1}s_2^{-1}s_3^{-1}s_1^{-1}s_2^{-1}s_3^{-1}s_2^{-1}s_1^{-1}s_3^{-1}s_2^{-1}s_1^{-1}s_3^{-1}s_3^{-1}; \end{tikzpicture}\\
&  &  &  &
\end{tabular}
\end{center}
\caption{The picture illustrates the solution of Figure 1 as a braid. The
black strand in the braid $b_{1}$ on the left side is replaced by the braid
$b_{2}$ to form a new braid. The cabling operation is denoted by $b_{1}\odot
b_{2}$}%
\label{figure cabling}%
\end{figure}

For the case of the gravitational potential $\alpha=2$, the result for the $3
$-body problem ($N=2+1$) has been obtained separately by Moulton \cite{4} and
Siegel \cite{8}. They establish the existence of periodic solutions of the
$3$-body problem by combining two circular motions of the $2$-body problem.
This problem, which includes Hill's moon problem as a special case, enjoys a
large literature and has been treated from various point of views in the
original works \cite{2} by Hill and \cite{3} by Hopf. The case $N=3+1$ has
been studied in \cite{Ch}. The methods used in \cite{4,8} and \cite{Ch} to
prove the existence of solutions are quite different from ours.

Our method starts by writing the Euler-Lagrange equations with respect to the
Euler functional $\mathcal{A}$ of the $N$-body problem, with $N=n+1$. By
changing the variables in the configuration space, the Euler functional splits
into two terms $\mathcal{A=A}_{0}+\mathcal{H}$, where $\mathcal{A}_{0}$ is the
uncoupled Euler functional of the $n$-body problem and the Kepler problem. The
part $\mathcal{H}$ represents the interaction of the pair with the $n$-body
problem. Using the parameter $\varepsilon$, representing the radius of the
circular orbit of the Kepler problem, the coupling term $\mathcal{H=O}%
(\varepsilon)$ is small in order of $\varepsilon$, and the functional
$\mathcal{A}_{0}$ explodes as $\varepsilon\rightarrow0$.

If $\dim(E)=2d$, the functional $\mathcal{A}_{0}$ is invariant under the group $U(d)^{2}$
acting diagonally on the Kepler component $u_{0}\in E$ and the $n$ bodies
component $u\in E^{n}$, while the coupling term $\mathcal{H}$ is invariant
only by the action of the diagonal subgroup $\widetilde{U(d)}$ that rotates
the $N$-body problem. Let $x_{a}=(a_{0},a)$ where $a_{0}$ represents the
orientation of the circular orbit of the Kepler problem with respect to the
central configuration $a$. The $U(d)^{2}$-orbit of $x_{a}$ consists of
critical points of the unperturbed functional $\mathcal{A}_{0}$. In the
gravitational case $\alpha=2$, even if the central configuration $a$ is
non-degenerate, the group orbit of $x_{a}$ is degenerate due to the existence
of elliptic orbits. A similar problem arises when $E$ has at least dimension
four, due to resonances of the circular orbit of the Kepler problem with extra
dimensions. To deal with this issue, we need an extra assumption on the
symmetries of the central configuration $a$. Thus the functional $\mathcal{A}$
is invariant under the action of a discrete group $\Gamma$ and we can restrict
the study of critical points to the fixed point space of $\Gamma$. The
advantage is that, in the fixed point space of $\Gamma$, the problem of
resonances can be avoided.

The symmetry group of $\mathcal{A}_{0}$ will thus be taken to be a subgroup
$G=G_{1}\times G_{2}\subset U(d)\times U(d)\ $ such that it leaves the fixed
point space of $\Gamma$ invariant; similarly for the symmetry group
$H\subset\widetilde{U(d)}$ of the coupling term $\mathcal{H}$. Then the orbit
$G(x_{a})$ is non-degenerate in the space of periodic paths fixed by $\Gamma$
when $a$ is a non-degenerate central configuration. The core of the proof
(section $3$ and $4$) relies on several Lyapunov-Schmidt reductions in a
neighbourhood of $G(x_{a})$ such that one can solve the normal components to
the orbit $G_{2}(a)$. In this manner, finding critical points of $\mathcal{A}$
in a neighbourhood of $G(x_{a})$ is equivalent to finding critical $H_{a_{0}}%
$-orbits of the regular functional $\Psi_{\varepsilon}^{\prime}:G_{2}%
(a)\rightarrow\mathbb{R}$ defined on the compact manifold $G_{2}(a)$. The
delicate part of the proof consists in finding uniform estimates in
$\varepsilon$ because the functional $\mathcal{A}$ explodes when
$\varepsilon\rightarrow0$. The main theorem is obtained by computing the
$H_{a_{0}}$-equivariant Lyusternik-Schnirelmann category of the compact
manifold $G_{2}(a)$, which gives a lower bound for the number of $H_{a_{0}}%
$-orbits of critical points of $\Psi_{\varepsilon}^{\prime}$ along the lines
of \cite{Fo}.

Besides our interest in gravitational potentials ($\alpha=2$), we are
interested in the case $\alpha=1$ corresponding to solutions of steady
near-parallel vortex filaments in fluids. The equations for $\alpha=1$ govern
the interaction of steady vortex filaments in fluids (Euler equation)
\cite{Ne01}, Bose-Einstein condensates (Gross-Pitaevskii equation)
\cite{JerrardSmets16} and superconductors (Ginzburg-Landau equation)
\cite{ContrerasJerrard16}. Specifically, the positions of the steady
near-parallel vortex filaments are determined in space by%
\[
\left(  q_{j}(s),s\right)  \in\mathbb{C\times R\simeq R}^{3}\text{.}%
\]
Therefore, the solutions that we construct correspond to $N=n+1$ vortex
filaments forming helices, where one of the vortices is replaced by a pair of
vortices forming another helix (Figure \ref{figure solution}).

The existence of braids has been investigated previously under the assumption
that the force is strong (case $\alpha\geq3$) in \cite{Go1,Mon1} and
references therein. In the case of strong forces, the Euler functional blows
up at any orbit belonging to the boundary of a braid class because it contains
collisions. This allows to prove the existence of minimisers for most braid
classes by the direct method of calculus of variation for \emph{tied} braids
(which excludes the lack of coercitivity caused by the possibility that groups
of bodies escape to infinity). Similar results hold for the existence of
choreographic classes under the assumption of strong forces. In \cite{Mo} and
references therein the symmetry groups of choreographic classes have been
classified. A short exposition of different methods used to prove the
existence of choreographies can be found in \cite{Ga2019} and references therein.

However, the relevant cases from the physical point of view are the $N$-body
problem ($\alpha=2$) and the $N$-vortex filament problem ($\alpha=1$). The
difficulty to obtain minimisers on braid classes is that the minimiser of the
Euler functional may have collisions. In \cite{FeTe04} a method was developed
to obtain choreographies of the $N$-body problem ($\alpha=2$) as minimisers.
But finding braids of the $N$-body problem ($\alpha=2$) as minimisers is a
more difficult task. Furthermore, finding braids of the $N$-vortex filament
problem ($\alpha=1$) is more difficult than the $N$-body case ($\alpha=2$). In
this paper we propose a new method based on blow up methods (similar to
\cite{Ba16,Ba17}) to approach these problems. The blow-up method described in
this manuscript is part of a series of applications, namely (a) replacing one
body in a central configuration by $k$ bodies, (b) replacing each body in a
central configuration by $k_{j}$ bodies.

In section $2$ we set the problem of finding solutions of the $N$-body problem
arising as critical points of the Euler functional defined on a Sobolev space
and we discuss the symmetries of the problem. In section $3$ we perform a
Lyapunov-Schmidt reduction to a finite dimensional problem by using a
decomposition of paths in Fourier series. In section $4$ we perform a second
Lyapunov-Schmidt reduction to solve the normal components to the group orbit
and we obtain a lower bound for the critical points by using
Lyusternik-Schnirelmann methods. In section $5$ we discuss the existence of
braids (Theorem \ref{main result} and Corollary \ref{main braids}) by cabling
central configurations. We also discuss the solutions in higher dimension
(Theorem \ref{main result copy(1)}).

\paragraph{Acknowledgements.}

We acknowledge the assistance of Ramiro Chavez Tovar with the preparation of
the figures. We acknowledge the anonymous referees for their useful comments
which greatly improved the presentation of this article. M. Fontaine is
supported by the FWO-EoS project G0H4518N. C. Garc\'{\i}a-Azpeitia is
supported by PAPIIT-UNAM grant IN115019.

\section{Problem setting}

Let $E$ be a real Euclidean space with inner product $\langle\cdot
,\cdot\rangle$. Denote by $q:=(q_{0},q_{1},\dots,q_{n})\in E^{N}$ a
configuration of $N=n+1$ bodies in $E$ with masses $m_{0},\dots,m_{n}>0$. We
work only with configurations whose center of mass is fixed at the origin,
which amounts to say that the configuration space has been reduced by
translations. Define the kinetic energy and the potential function
\[
K=\frac{1}{2}\sum_{j=0}^{n}m_{j}\Vert\dot{q}_{j}\Vert^{2}\quad\mbox{and}\quad
U=\sum_{0\leq j<k\leq n}m_{j}m_{k}\phi_{\alpha}(\Vert q_{j}-q_{k}\Vert),
\]
where $\Vert\dot{q}_{j}\Vert^{2}=\langle\dot{q}_{j},\dot{q}_{j}\rangle$ and
$\phi_{\alpha}$ is a function such that $\phi_{\alpha}^{\prime}(r)=-r^{-\alpha
}$. The Newtonian potential corresponds to $\phi_{2}(r)=1/r $ and the vortex
filament potential corresponds to $\phi_{1}(r)=-\ln(r)$. The system of
equations of motion of the $N$-body problem reads
\[
m_{\ell}\ddot{q}_{\ell}=\nabla_{q_{\ell}}U=-\sum_{k\neq\ell}m_{\ell}m_{k}%
\frac{q_{\ell}-q_{k}}{\Vert q_{\ell}-q_{k}\Vert^{\alpha+1}},\qquad\ell
=0,\dots,n.
\]
Let $L=K+U$ be the Lagrangian of the system. The \defn{Euler functional}
\[
\mathcal{A}(q)=\int_{0}^{T}L((q(t),\dot{q}(t))dt
\]
is taken over the Sobolev space $H^{1}([0,T],E^{N})$ of paths
$q:[0,T]\rightarrow E^{N}$ such that $q$ and its first derivative $\dot{q}$
are square integrable in the sense of distributions.

\subsection{Jacobi-like coordinates}

Define fictional mass parameters $M_{0}=m_{0}m_{1}$, $M_{1}=m_{0}+m_{1}$ and
$M_{\ell}=m_{\ell}$ otherwise. After a rescaling we suppose that $M_{1}=1$.
Define new variables in the configuration space namely, $Q_{0}=q_{1}-q_{0}$,
$Q_{1}=m_{0}q_{0}+m_{1}q_{1}$, and $Q_{\ell}=q_{\ell}$ otherwise. Setting
$\mu_{0}=m_{1}$ and $\mu_{1}=-m_{0}$ we can write $q_{j}=Q_{1}-\mu_{j}Q_{0} $
for $j=0,1$. Observe that the center of mass of the configuration
\[
Q=(Q_{1},\dots,Q_{n}),
\]
with respect to the fictional masses $M_{1}, \dots, M_{n}$, remains at the origin.

\begin{proposition}
In the new coordinates $(Q_{0},Q)$, the kinetic energy and the potential
energy become
\[
K=\frac{1}{2}\sum_{j=0}^{n}M_{j}\Vert\dot{Q}_{j}\Vert^{2}\quad\mbox{and}\quad
U=M_{0}\phi_{\alpha}(\Vert Q_{0}\Vert)+\sum_{1\leq j<k\leq n}M_{j}M_{k}%
\phi_{\alpha}(\Vert Q_{j}-Q_{k}\Vert)+h(Q_{0},Q)
\]
with%
\begin{equation}
h(Q_{0},Q)=\sum_{k=2}^{n}\sum_{j=0,1}m_{k}m_{j}\left(  \phi_{\alpha}(\Vert
Q_{1}-\mu_{j}Q_{0}-Q_{k}\Vert)-\phi_{\alpha}(\Vert Q_{1}-Q_{k}\Vert)\right)
\text{.} \label{h in coordinates (Q_0,Q)}%
\end{equation}

\end{proposition}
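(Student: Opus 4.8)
The plan is to treat the two identities separately, since both follow by substituting $q_{j}=Q_{1}-\mu_{j}Q_{0}$ for $j=0,1$ (and $q_{\ell}=Q_{\ell}$, $m_{\ell}=M_{\ell}$ for $\ell\geq 2$) into the given expressions and regrouping. The whole computation is elementary, so the work is organizational rather than conceptual.

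For the kinetic energy I would first observe that the bodies $\ell=2,\dots,n$ contribute unchanged, since there $q_{\ell}=Q_{\ell}$ and $m_{\ell}=M_{\ell}$. It then remains to show $m_{0}\Vert\dot{q}_{0}\Vert^{2}+m_{1}\Vert\dot{q}_{1}\Vert^{2}=M_{0}\Vert\dot{Q}_{0}\Vert^{2}+M_{1}\Vert\dot{Q}_{1}\Vert^{2}$. Differentiating the relation $q_{j}=Q_{1}-\mu_{j}Q_{0}$ gives $\dot{q}_{0}=\dot{Q}_{1}-m_{1}\dot{Q}_{0}$ and $\dot{q}_{1}=\dot{Q}_{1}+m_{0}\dot{Q}_{0}$. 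Expanding the two squared norms, the cross terms carry the coefficient $-m_{0}\mu_{0}-m_{1}\mu_{1}=-m_{0}m_{1}+m_{1}m_{0}=0$ and cancel; the $\Vert\dot{Q}_{1}\Vert^{2}$ terms combine with coefficient $m_{0}+m_{1}=M_{1}$, and the $\Vert\dot{Q}_{0}\Vert^{2}$ terms combine with coefficient $m_{0}m_{1}^{2}+m_{1}m_{0}^{2}=m_{0}m_{1}(m_{0}+m_{1})=M_{0}$, where the last equality uses the normalization $M_{1}=m_{0}+m_{1}=1$. This yields the claimed diagonal form.

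For the potential I would split the double sum $U=\sum_{0\leq j<k\leq n}m_{j}m_{k}\phi_{\alpha}(\Vert q_{j}-q_{k}\Vert)$ into three groups: the single pair $(j,k)=(0,1)$, the pairs with $2\leq j<k\leq n$, and the mixed pairs with $j\in\{0,1\}$, $k\geq 2$. The first group gives $m_{0}m_{1}\phi_{\alpha}(\Vert q_{1}-q_{0}\Vert)=M_{0}\phi_{\alpha}(\Vert Q_{0}\Vert)$; the second is already $\sum_{2\leq j<k\leq n}M_{j}M_{k}\phi_{\alpha}(\Vert Q_{j}-Q_{k}\Vert)$. For the mixed group, substituting $q_{j}-q_{k}=Q_{1}-\mu_{j}Q_{0}-Q_{k}$ gives $\sum_{k=2}^{n}\sum_{j=0,1}m_{j}m_{k}\phi_{\alpha}(\Vert Q_{1}-\mu_{j}Q_{0}-Q_{k}\Vert)$. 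To recover the clean sum $\sum_{1\leq j<k\leq n}M_{j}M_{k}\phi_{\alpha}(\Vert Q_{j}-Q_{k}\Vert)$, I would isolate its $j=1$ terms, namely $\sum_{k=2}^{n}M_{1}M_{k}\phi_{\alpha}(\Vert Q_{1}-Q_{k}\Vert)=\sum_{k=2}^{n}m_{k}\phi_{\alpha}(\Vert Q_{1}-Q_{k}\Vert)$, and rewrite the weight as $m_{k}=(m_{0}+m_{1})m_{k}=\sum_{j=0,1}m_{j}m_{k}$, again using $m_{0}+m_{1}=1$. Subtracting this reference sum from the mixed group leaves precisely the expression $h(Q_{0},Q)$ of \eqref{h in coordinates (Q_0,Q)}.

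I expect no genuine obstacle: the only points requiring care are tracking which ordered pairs fall into which group and the repeated use of the normalization $m_{0}+m_{1}=M_{1}=1$, which is what cancels the kinetic cross term and lets the coefficient $M_{1}M_{k}=m_{k}$ be split as a weighted sum over $j=0,1$ so that the reference term is absorbed into $h$.
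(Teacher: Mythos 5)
Your proposal is correct and follows essentially the same route as the paper's proof: expanding the kinetic energy so the cross terms cancel and the coefficient $m_{0}m_{1}^{2}+m_{1}m_{0}^{2}=M_{0}$ emerges via $m_{0}+m_{1}=1$, and splitting the potential into the pair $(0,1)$, the pairs $2\leq j<k\leq n$, and the mixed pairs, then adding and subtracting the $j=1$ reference sum with the weight split $M_{1}M_{k}=\sum_{j=0,1}m_{j}m_{k}$. The only cosmetic difference is that you verify the cross-term cancellation explicitly in the $\mu_{j}$ notation, which the paper leaves implicit.
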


\begin{proof}
Using that $m_{0}+m_{1}=1$, $q_{0}=Q_{1}-m_{1}Q_{0}$ and $q_{1}=Q_{1}%
+m_{0}Q_{0}$, we have
\[
\sum_{j=0,1}m_{j}\Vert\dot{q}_{j}\Vert^{2}=\Vert\dot{Q}_{1}\Vert^{2}+\left(
m_{0}m_{1}^{2}+m_{0}^{2}m_{1}\right)  \Vert\dot{Q}_{0}\Vert^{2}=M_{1}\Vert
\dot{Q}_{1}\Vert^{2}+M_{0}\Vert\dot{Q}_{0}\Vert^{2}.
\]
Then $K=\frac{1}{2}\sum_{j=0}^{n}M_{j}\Vert\dot{Q}_{j}\Vert^{2}$. For the
potential energy we have%
\begin{align*}
U  &  =\sum_{j<k}m_{j}m_{k}\phi_{\alpha}(\left\Vert q_{j}-q_{k}\right\Vert )\\
&  =m_{0}m_{1}\phi_{\alpha}(\left\Vert q_{0}-q_{1}\right\Vert )+\sum_{k=2}%
^{n}\sum_{j=0,1}m_{k}m_{j}\phi_{\alpha}(\left\Vert q_{j}-q_{k}\right\Vert
)+\sum_{2\leq j<k\leq n}m_{j}m_{k}\phi_{\alpha}(\left\Vert q_{j}%
-q_{k}\right\Vert )\\
&  =M_{0}\phi_{\alpha}(\left\Vert Q_{0}\right\Vert )+\sum_{1\leq j<k\leq
n}M_{j}M_{k}\phi_{\alpha}(\left\Vert Q_{j}-Q_{k}\right\Vert )+h(Q_{0},Q),
\end{align*}
where%
\[
h(Q_{0},Q)=\sum_{k=2}^{n}\sum_{j=0,1}m_{k}m_{j}\phi_{\alpha}(\left\Vert
q_{j}-Q_{k}\right\Vert )-\sum_{k=2}^{n}M_{1}M_{k}\phi_{\alpha}(\left\Vert
Q_{1}-Q_{k}\right\Vert )\text{.}%
\]
Since $M_{1}=m_{0}+m_{1}=1$, and $q_{k}=Q_{k}$ and $m_{k}=M_{k}$ for $k\geq2
$, we obtain%
\[
h(Q_{0},Q)=\sum_{k=2}^{n}\sum_{j=0,1}m_{k}m_{j}\left(  \phi_{\alpha
}(\left\Vert q_{j}-Q_{k}\right\Vert )-\phi_{\alpha}(\left\Vert Q_{1}%
-Q_{k}\right\Vert )\right)  .
\]
The result for $h$ follows from the fact that $q_{j}=Q_{1}-\mu_{j}Q_{0}$ for
$j=0,1$.
\end{proof}

The Euler functional splits into two terms
\begin{equation}
\mathcal{A}(Q_{0},Q)=\mathcal{A}_{0}(Q_{0},Q)+\mathcal{H}(Q_{0},Q).
\label{euler functional}%
\end{equation}
They are explicitly given by
\[
\mathcal{A}_{0}(Q_{0},Q)=\int_{0}^{T}\frac{1}{2}\sum_{j=0}^{n}M_{j}\Vert
\dot{Q}_{j}(t)\Vert^{2}+M_{0}\phi_{\alpha}(\Vert Q_{0}(t)\Vert)+\sum_{1\leq
j<k\leq n}M_{j}M_{k}\phi_{\alpha}(\Vert Q_{j}(t)-Q_{k}(t)\Vert)\,dt
\]
and $\mathcal{H}(Q_{0},Q)=\int_{0}^{T}h(Q_{0}(t),Q(t))\,dt$ with $h$ as in
\eqref{h in coordinates (Q_0,Q)}. Notice that $h(Q_{0},Q)$ is an analytic
function in a neighbourhood of $Q_{0}=0$ with $h(Q_{0},Q)=\mathcal{O}%
(\left\Vert Q_{0}\right\Vert )$. Furthermore $h$ is invariant under linear
isometries
\begin{equation}
h(gQ_{0},gQ)=h(Q_{0},Q) \label{invariance of h}%
\end{equation}
where $g\in O(E)$ and $gQ=(gQ_{1},\dots,gQ_{n})$.

\subsection{Rotating-like coordinates}

Since we already reduced the space by translations, a relative equilibrium of
the $n$-body problem is now a solution of the Newton's equations which is an
equilibrium after reducing the configuration space by the group of linear
isometries $O(E)$ acting diagonally on $E^{n}$. That is, the motion is of the
form $Q(t)=\exp(t\Lambda)a$ for a fixed configuration $a\in E^{n}$ and a
skew-symmetric matrix $\Lambda$. Since $\Lambda$ is non-degenerate on the
space of motion (see \cite{dist}), we may suppose from the beginning that $E$
is even dimensional and is endowed with a compatible almost complex structure.
We set $\dim(E)=2d$ and pick a basis such that the complex structure is block
diagonal%
\[
\mathcal{J}:=J\oplus\dots\oplus J\text{,}%
\]
where $J$ is the standard symplectic matrix on $\mathbb{R}^{2}$. We define
rotating-like coordinates
\[
Q_{j}(t)=\exp(t\mathcal{J})v_{j}(t).
\]
In the coordinates $v_{j}$, the two terms of the Euler functional
\eqref{euler functional} become
\[
\mathcal{A}_{0}(v_{0},v)=\int_{0}^{T}\frac{1}{2}\sum_{j=0}^{n}M_{j}%
\Vert\left(  \partial_{t}+\mathcal{J}\right)  v_{j}(t)\Vert^{2}+\,M_{0}%
\phi_{\alpha}(\Vert v_{0}(t)\Vert)+\sum_{1\leq j<k\leq n}M_{j}M_{k}%
\phi_{\alpha}(\Vert v_{j}(t)-v_{k}(t)\Vert)\,dt
\]
and $\mathcal{H}(v_{0},v)=\int_{0}^{T}h(v_{0}(t),v(t))\,dt$ remains unchanged
because of its invariance under linear isometries \eqref{invariance of h}. The
Euler-Lagrange equations for $\mathcal{A}_{0}$ are
\begin{align}
\frac{\delta\mathcal{A}_{0}}{\delta v_{0}}  &  =-M_{0}\left(  \partial
_{t}+\mathcal{J}\right)  ^{2}v_{0}-M_{0}\frac{v_{0}}{\left\Vert v_{0}%
\right\Vert ^{\alpha+1}}=0\label{v}\\
\frac{\delta\mathcal{A}_{0}}{\delta v_{\ell}}  &  =-M_{\ell}\left(
\partial_{t}+\mathcal{J}\right)  ^{2}v_{\ell}\,-\sum_{k=1(k\neq\ell)}%
^{n}M_{\ell}M_{k}\frac{v_{\ell}-v_{k}}{\left\Vert v_{\ell}-v_{k}\right\Vert
^{\alpha+1}}=0\text{.} \label{u}%
\end{align}

Equation \eqref{v} is the Kepler problem in rotating coordinates. Equations
\eqref{u} are Newton's equations for $n$ bodies with masses $M_{1},\dots
,M_{n}$ in rotating coordinates. A \textbf{\itshape central configuration}
$a=(a_{1},\dots,a_{n})\in E^{n}$ satisfies the equations%

\[
a_{\ell}=\sum_{k\neq\ell}M_{k}\frac{a_{\ell}-a_{k}}{\left\Vert a_{\ell}%
-a_{k}\right\Vert ^{\alpha+1}}.
\]
Therefore, $a$ is an equilibrium of equations \eqref{u}, and the motion
$Q(t)=\exp(t\mathcal{J})a$ is a relative equilibrium. Central configurations
can also be defined as critical points of the \defn{amended potential} of the
$n$-body problem
\begin{equation}
\label{amended potential V}V(v)=\frac{1}{2}\sum_{j=1}^{n}M_{j}\left\Vert
v_{j}\right\Vert ^{2}+\sum_{1\leq k<j\leq n}M_{j}M_{k}\phi_{\alpha}(\left\Vert
v_{j}-v_{k}\right\Vert ).
\end{equation}
Then $a\in E^{n}$ is a central configuration if and only if $\nabla V(a)=0$.

\subsection{Time and space scaling}

Equation (\ref{v}) is the Kepler problem for homogeneous potentials in
rotating coordinates. This equation has solutions corresponding to circular
orbits. We consider a special type of circular orbits of the form
\[
v_{0}(t)=\varepsilon\exp(\left(  \omega-1\right)  \mathcal{J}t)a_{0},
\]
where%
\[
a_{0}\in E\quad\mbox{is of unit length and}\quad\omega=\pm\varepsilon
^{-(\alpha+1)/2}.
\]
The case $\omega>0$ corresponds to a prograde rotation of the pair and
$\omega<0$ to a retrograde rotation.

We introduce a change of coordinates which is particularly useful to continue
the circular solution of (\ref{v}) and the equilibrium of \eqref{u}. This
change of coordinates is defined by
\begin{align*}
v_{0}(t)  &  =\varepsilon\exp(\left(  \omega-1\right)  \mathcal{J}t)u_{0}(\nu
t)\\
v_{\ell}(t)  &  =u_{\ell}(\nu t),\qquad\ell=1,...,n
\end{align*}
where $\nu\in\R$ is a frequency. We shall now introduce a new time parameter
$s=\nu t$ and write
\[
x(s)=(u_{0}(s),u(s)).
\]
In the proposition below, we express the Euler functional $\mathcal{A}%
(v_{0},v)$ in terms of the new coordinates $(u_{0},u)$. This functional is
referred to as a \textit{normalised} functional because it involves a scaling
in time of the old functional by a factor $\nu$. By making an abuse of
notation it is still denoted $\mathcal{A}(x)=\mathcal{A}_{0}(x)+\mathcal{H}(x)$.
Note that the old and the new functionals have the same critical points.
Moreover, for any central configuration $a\in E^{n}$ and any unit length
vector $a_{0}\in E$, the constant path
\begin{equation}
x_{a}(s)=(a_{0},a)
\end{equation}
is a critical point of the unperturbed functional $\mathcal{A}_{0}(x)$.

\begin{proposition}
Suppose $\alpha\geq1$. In the coordinates $x(s)=(u_{0}(s),u(s))$, the
normalised Euler functional $\mathcal{A}(x)=\mathcal{A}_{0}(x)+\mathcal{H}(x)$
is given by the two terms
\begin{align}
\mathcal{A}_{0}(x)  &  =\varepsilon^{1-\alpha}M_{0}\int_{0}^{2\pi}\frac{1}%
{2}\Vert\left(  \frac{\nu}{\omega}\partial_{s}+\mathcal{J}\right)
u_{0}(s)\Vert^{2}+\phi_{\alpha}(\Vert u_{0}(s)\Vert
)~ds\nonumber\label{h in term of u}\\
&  +\int_{0}^{2\pi}\frac{1}{2}\sum_{j=1}^{n}M_{j}\Vert\left(  \nu\partial
_{s}+\mathcal{J}\right)  u_{j}(s)\Vert^{2}+\sum_{1\leq j<k\leq n}M_{j}%
M_{k}\phi_{\alpha}(\Vert u_{j}(s)-u_{k}(s)\Vert)~ds\nonumber\\
\mathcal{H}(x)  &  =\int_{0}^{2\pi}h\left(  \varepsilon\exp\left(
\frac{\omega-1}{\nu}s\,\mathcal{J}\right)  u_{0}(s),u(s)\right)  ~ds~.
\end{align}
Explicitly the integrand $h$ is
\[
h(u_{0}, u)=\sum_{k=1}^{n}\sum_{j=0,1}M_{k}m_{j}\left(  \phi_{\alpha}(\Vert
u_{1}(s)-\mu_{j}\varepsilon\exp\left(  \frac{\omega-1}{\nu}s\,\mathcal{J}%
\right)  u_{0}(s)-u_{k}(s)\Vert)-\phi_{\alpha}(\Vert\left(  u_{1}%
(s)-u_{k}(s)\Vert\right)  \right)  .
\]

\end{proposition}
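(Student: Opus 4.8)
The plan is to verify the two identities by direct substitution of the space--time change of coordinates into the rotating-coordinate expressions for $\mathcal{A}_0$ and $\mathcal{H}$ recorded in the previous subsection, followed by the time rescaling $s=\nu t$. First I would treat the Kepler block. Differentiating $v_0(t)=\varepsilon\exp((\omega-1)\mathcal{J}t)u_0(\nu t)$ and using that $\mathcal{J}$ commutes with $\exp((\omega-1)\mathcal{J}t)$, the product rule gives
\[
(\partial_t+\mathcal{J})v_0=\varepsilon\omega\,\exp((\omega-1)\mathcal{J}t)\Big(\tfrac{\nu}{\omega}\,u_0'(\nu t)+\mathcal{J}u_0(\nu t)\Big).
\]
Since $\exp((\omega-1)\mathcal{J}t)$ is a linear isometry, taking norms removes the rotation factor and yields $\|(\partial_t+\mathcal{J})v_0\|^2=\varepsilon^2\omega^2\|(\tfrac{\nu}{\omega}\partial_s+\mathcal{J})u_0\|^2$ evaluated at $s=\nu t$. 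The key algebraic input is the defining relation $\omega=\pm\varepsilon^{-(\alpha+1)/2}$, which gives $\varepsilon^2\omega^2=\varepsilon^{1-\alpha}$ and hence produces the common prefactor on the kinetic Kepler term.

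For the remaining bodies $\ell=1,\dots,n$ the substitution $v_\ell(t)=u_\ell(\nu t)$ gives $(\partial_t+\mathcal{J})v_\ell=(\nu\partial_s+\mathcal{J})u_\ell$ directly, while the interaction distances are unchanged, $\|v_j-v_k\|=\|u_j(s)-u_k(s)\|$. For the potential terms I would use the isometry invariance of the norm together with the homogeneity of $\phi_\alpha$: from $\phi_\alpha'(r)=-r^{-\alpha}$ one has $\phi_\alpha(\varepsilon r)=\varepsilon^{1-\alpha}\phi_\alpha(r)$ when $\alpha>1$, so that $M_0\phi_\alpha(\|v_0\|)=M_0\phi_\alpha(\varepsilon\|u_0\|)=\varepsilon^{1-\alpha}M_0\phi_\alpha(\|u_0\|)$, matching the prefactor coming from the kinetic term. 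The one point requiring care is the borderline case $\alpha=1$, where $\phi_1(\varepsilon r)=\phi_1(r)-\ln\varepsilon$ and $\varepsilon^{1-\alpha}=1$; here the discrepancy is the path-independent constant $-\ln\varepsilon$, which I would absorb into the additive constant in the definition of $\phi_1$ since it does not affect the functional's critical points.

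It then remains to perform the change of time variable $s=\nu t$, so that $dt=ds/\nu$ and the interval $[0,T]$ becomes $[0,2\pi]$, and to multiply the resulting functional by the factor $\nu$ (this is the normalisation alluded to in the text, and being a nonzero constant multiple it preserves the set of critical points). Multiplying by $\nu$ cancels the Jacobian $1/\nu$ and produces exactly the stated expression for $\mathcal{A}_0(x)$, with the Kepler block carrying the factor $\varepsilon^{1-\alpha}M_0$ on both its kinetic and potential parts.

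For the coupling term I would invoke the isometry invariance \eqref{invariance of h} of $h$, which keeps $\mathcal{H}$ unchanged when passing to the rotating coordinates $v_j$; substituting $v_0(t)=\varepsilon\exp((\omega-1)\mathcal{J}t)u_0(\nu t)$ and $v_\ell=u_\ell$, and noting that after the time change $(\omega-1)t=\tfrac{\omega-1}{\nu}s$, turns the rotation into $\exp(\tfrac{\omega-1}{\nu}s\,\mathcal{J})$. After the same rescaling and multiplication by $\nu$, $\mathcal{H}(x)$ acquires the claimed integral form, and inserting the explicit expression for $h$ from Proposition 2.1 (with $Q_1=u_1$, $Q_k=u_k$, and $Q_0=\varepsilon\exp(\tfrac{\omega-1}{\nu}s\,\mathcal{J})u_0$) yields the displayed integrand. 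The computation is essentially bookkeeping; the only genuinely delicate step is the bundling of the $\varepsilon^2\omega^2$ kinetic factor with the $\varepsilon^{1-\alpha}$ potential factor through $\omega=\pm\varepsilon^{-(\alpha+1)/2}$, together with the clean handling of the logarithmic case $\alpha=1$.
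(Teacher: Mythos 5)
Your proposal is correct and follows essentially the same route as the paper's own proof: substitute the coordinate change, use the isometry invariance of the norms and of $h$, exploit homogeneity of $\phi_{\alpha}$ (with the additive constant $-\ln\varepsilon$ handled separately for $\alpha=1$), combine $\varepsilon^{2}\omega^{2}=\varepsilon^{1-\alpha}$ via condition $\mathbf{(A)}$, and rescale time by $\nu$. The only cosmetic difference is that you carry out the product-rule computation of $(\partial_{t}+\mathcal{J})v_{0}$ explicitly and treat the kinetic prefactor uniformly in $\alpha$, whereas the paper states the resulting identity directly and splits the cases $\alpha>1$ and $\alpha=1$.
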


\begin{proof}
When $\alpha>1$ the potential $\phi_{\alpha}$ is homogeneous of degree
$1-\alpha$, then
\[
\phi_{\alpha}(\Vert v_{0}(t)\Vert)=\varepsilon^{1-\alpha}\phi_{\alpha}(\Vert
u_{0}(s)\Vert).
\]
Moreover
\[
\left\Vert \left(  \partial_{t}+\mathcal{J}\right)  v_{0}(t)\right\Vert
^{2}=\left\Vert \varepsilon\left(  \nu\partial_{s}+\omega\mathcal{J}\right)
u_{0}(s)\right\Vert ^{2}=\varepsilon^{1-\alpha}\left\Vert \left(  \frac{\nu
}{\omega}\partial_{s}+\mathcal{J}\right)  u_{0}(s)\right\Vert ^{2}%
\]
and the result follows by rescaling $\mathcal{A}$ by $\nu$.

The case $\alpha=1$ is similar, but now $\phi_{\alpha}(\Vert v_{0}%
(t)\Vert)=\phi_{\alpha}(\Vert u_{0}(s)\Vert)-\ln(\varepsilon)$ and
\[
\left\Vert \left(  \partial_{t}+\mathcal{J}\right)  v_{0}(t)\right\Vert
^{2}=\left\Vert \left(  \frac{\nu}{\omega}\partial_{s}+\mathcal{J}\right)
u_{0}(s)\right\Vert ^{2}.
\]
The result for $\alpha=1$ follows by rescaling $\mathcal{A}$ by $\nu$ and
adding the constant $-2\pi M_{0}\ln(\varepsilon)$.

Finally the term $h$ is given by changing the coordinates in
\eqref{h in coordinates (Q_0,Q)}. Since the term is invariant by rotations we
may replace each term $Q_{k}$ by $v_{k}$. In terms of the coordinates
$(u_{0},u)$ it becomes
\[
h=\sum_{k=1}^{n}\sum_{j=0,1}M_{k}m_{j}\left(  \phi_{\alpha}(\Vert u_{1}(\nu
t)-\mu_{j}\varepsilon\exp\left(  (\omega-1)t\mathcal{J}\right)  u_{0}(\nu
t)-u_{k}(\nu t)\Vert)-\phi_{\alpha}(\Vert\left(  u_{1}(\nu t)-u_{k}(\nu
t)\Vert\right)  \right)  .
\]
By changing the time parameter as $s=\nu t$, we obtain the expression written
in the statement.
\end{proof}

\subsection{Gradient formulation}

Let $\mathbb{S}^{1}=\R/2\pi\Z$ and consider the subset of $2\pi$-periodic
paths,
\[
X:=H^{1}(\mathbb{S}^{1},E^{N})\subset H^{1}([0,2\pi],E^{N}).
\]
The space $X$ is a real Hilbert space with inner product
\[
(x_{1},x_{2})_{X}=(x_{1},x_{2})_{L^{2}}+(\dot{x}_{1},\dot{x}_{2})_{L^{2}}%
=\int_{0}^{2\pi}\langle x_{1}(s),x_{2}(s)\rangle+\langle\dot{x}_{1}(s),\dot
{x}_{2}(s)\rangle ds.
\]
The topological dual $X^{\prime}$ is identified with the the Sobolev space of
distributions $X^{-1}$ defined by%
\[
X^{s}:=H^{s}(\mathbb{S}^{1},E^{N})=\left\{  (\hat{x}_{\ell})_{\ell
\in\mathbb{Z}}\mid\sum_{\ell\in\mathbb{Z}}(\ell^{2}+1)^{s}\Vert\hat{x}_{\ell
}\Vert^{2}<\infty\right\}  ,
\]
where $(\hat{x}_{\ell})$ is the sequence of Fourier coefficients in
$(E_{\mathbb{C}})^{N}=(E\oplus iE)^{N}$ of $x$ satisfying $\hat{x}_{\ell
}=\overline{\hat{x}}_{-\ell}$. In particular, an element $x\in X^{s}$ for
$s\geq0$ can be written as a Fourier series for the function $x(s)=\sum
_{\ell\in\mathbb{Z}}\hat{x}_{\ell}e^{i\ell s}$. On the other hand, an element
$y\in X^{-s}$ for $s>0$ is a distribution that acts on a test function
$x(s)=\sum_{\ell\in\mathbb{Z}}\hat{x}_{\ell}e^{i\ell s}\in X^{s}$ by the
formula $(y,x)=\sum_{\ell\in\mathbb{Z}}\hat{y}_{\ell}\cdot\hat{x}_{\ell}.$

For a given open collision-less subset $\Omega\subset X$ we denote by
$d\mathcal{A}:\Omega\subset X\rightarrow X^{\prime}$ the differential of
$\mathcal{A}$. Using the identification between the dual space $X^{\prime}$
and $X^{-1}$ we define the operator of first variation $\delta\mathcal{A}%
:\Omega\subset X\rightarrow X^{-1}$ satisfying $d\mathcal{A}(x)(y)=(\delta
\mathcal{A}(x),y)$. On the other hand, by the Riesz representation theorem,
the gradient operator $\nabla\mathcal{A}:\Omega\subset X\rightarrow X$ is
uniquely defined by $d\mathcal{A}(x)(y)=(\nabla\mathcal{A}(x),y)_{X}$. Using
an integration by parts and the fact that the paths are periodic, we obtain
that
\[
(\delta\mathcal{A}(x),y)=d\mathcal{A}(x)(y)=(\nabla\mathcal{A}(x),y)_{X}%
=((-\partial_{s}^{2}+1)\nabla\mathcal{A}(x),y)
\]
where $(-\partial_{s}^{2}+1)^{-1}:X^{-1}\rightarrow X$ is the Riesz map. Thus
we conclude that
\[
\nabla\mathcal{A}=(-\partial_{s}^{2}+1)^{-1}\delta\mathcal{A}:\Omega\subset
X\rightarrow X\text{.}%
\]

For $x\in X$ the Euler-Lagrange equations of the unperturbed functional
$\mathcal{A}_{0}$ in gradient formulation are
\begin{align}
\nabla_{u_{0}}\mathcal{A}_{0}(x)  &  =\left(  -\partial_{s}^{2}+1\right)
^{-1}\varepsilon^{1-\alpha}M_{0}\left(  -\left(  \frac{\nu}{\omega}%
\partial_{s}+\mathcal{J}\right)  ^{2}u_{0}-\frac{u_{0}}{\left\Vert
u_{0}\right\Vert ^{\alpha+1}}\right)  =0\label{gradient A_0 u}\\
\nabla_{u_{\ell}}\mathcal{A}_{0}(x)  &  =\left(  -\partial_{s}^{2}+1\right)
^{-1}M_{\ell}\left(  -\left(  \nu\partial_{s}+\mathcal{J}\right)  ^{2}u_{\ell
}-\sum_{k\neq\ell}M_{k}\frac{u_{\ell}-u_{k}}{\left\Vert u_{\ell}%
-u_{k}\right\Vert ^{\alpha+1}}\right)  =0\text{.}%
\end{align}
Note that the operators $\left(  \frac{\nu}{\omega}\partial_{s}+\mathcal{J}%
\right)  ^{2}$ and $\left(  \nu\partial_{s}+\mathcal{J}\right)  ^{2}$ are
defined from $X$ to $X^{-1}$ whereas $\left(  -\partial_{s}^{2}+1\right)
^{-1}$ is defined from $X^{-1}$ to $X$. Furthermore, it is possible to
consider the composition of these operators as operators from $X$ to $X$
without passing by the dual space $X^{-1}$. For instance, given $x\in X$,%
\begin{equation}
\left(  -\partial_{s}^{2}+1\right)  ^{-1}\left(  \nu\partial_{s}%
+\mathcal{J}\right)  ^{2}x=\sum_{\ell\in\mathbb{Z}}\frac{1}{\ell^{2}+1}\left(
i\ell\nu\mathcal{I}+\mathcal{J}\right)  ^{2}\hat{x}_{\ell}e^{i\ell s}\text{.}
\label{OpFou}%
\end{equation}

The above equations admit the solution path $x_{a}\in X$ given by
\begin{equation}
x_{a}(s)=(a_{0},a),\quad\forall s\in\mathbb{S}^{1}. \label{x_a}%
\end{equation}
We want to prove that there are critical solutions $x(s)=(u_{0}(s),u(s))$
close to $x_{a}$ that persist as critical solutions for the perturbed
functional $\mathcal{A}(x)=\mathcal{A}_{0}(x)+\mathcal{H(}x)$.

To ensure that $\exp\left(  \frac{\omega-1}{\nu}\mathcal{J}s\right)  $ is
$2\pi$-periodic, and $\mathcal{H}$ is well defined in $X$, we need to impose
the condition $\omega=1+m\nu$ for some $m\in\Z$. In particular, we imposed the
following conditions on the set of parameters:

\begin{description}
\item[(A)] $\omega=\pm\varepsilon^{-(\alpha+1)/2}$.

\item[(B)] $\omega=1+\nu$.
\end{description}

We will prove that $\nabla\mathcal{H}(x)=\mathcal{O}_{X}(\varepsilon)$ in the
space $X$. Condition \textbf{(A)} implies that $x_{a}$ is a critical point of
$\mathcal{A}_{0}(x)$ and condition \textbf{(B)} that $\mathcal{H}(x)$ is well
defined in the space of $2\pi$-periodic paths $X$. The critical solutions of
$\mathcal{A}(x)$ provide solutions of the $N$-body problem. We prove the
existence of a continuum of solutions when $\varepsilon\rightarrow0$.
Conditions \textbf{(A)}-\textbf{(B)} determine $\omega$ and $\nu$ as functions
of $\varepsilon$ such that $\omega,\nu\rightarrow+\infty$ when $\varepsilon
\rightarrow0$ for the prograde rotation, and $\omega,\nu\rightarrow-\infty$
for the retrograde rotation. In principle, we do not need to assume that the
parameter $\omega$ is rational. Braids are particular solutions such that
$d=1$ and $\omega\in\mathbb{Q}$.

\subsection{Discrete and continuous symmetries}

\label{section: symmetries}

Since $U(d)$ is the centraliser of $\mathcal{J}$ in $O(E)$, the unperturbed
functional $\mathcal{A}_{0}$ is invariant with respect to the product group
$U(d)\times U(d)$. The first factor acting on the component $u_{0}$, and the
second factor acting diagonally on the $n$ last components $u\in E^{n}$. The
action of this group extends on $X$ by rotating non simultaneously the Kepler
orbit and the central configuration; that is,
\[
(g_{1},g_{2})(u_{0},u)=(g_{1}u_{0},g_{2}u),\qquad(g_{1},g_{2})\in U(d)\times
U(d)
\]
where $g_{2}u=(g_{2}u_{1},\dots,g_{2}u_{n})$. Observe that the coupling term
$\mathcal{H}$ in the functional breaks the symmetry of $\mathcal{A}_{0}$ and
the perturbed functional $\mathcal{A=A}_{0}+\mathcal{H}$ is only invariant
with respect to the diagonal subgroup
\[
\widetilde{U(d)}=\{(g_{1},g_{1})\in U(d)\times U(d)\mid g_{1}\in U(d)\},
\]
acting by rotating the $N=n+1$ bodies with respect to the origin. We now
distinguish the three following cases:

\begin{itemize}
\item[\textbf{(C1)}] $E$ is the plane $(d=1)$ and $\alpha\neq2$.

\item[\textbf{(C2)}] $E$ is the plane $(d=1)$ and $\alpha=2$ (Newtonian case).

\item[\textbf{(C3)}] $E$ is of higher dimension $(d\geq2)$ and $\alpha\geq1$.
\end{itemize}

Those cases need to be treated separately in Lemma \ref{Estimate hessian A_0}
in order to perform a reduction of dimension. Indeed, the reduction relies on
the invertibility of a regularised hessian operator at the critical point on
some slice in $X$. The invertibility fails in cases $\mathbf{(C2)}$ and
$\mathbf{(C3)}$. In case $\mathbf{(C2)}$ this is due to the appearance of
resonances given by elliptic orbits, and in case $\mathbf{(C3)}$ this is due
to the presence of resonances in higher dimension. To deal with this issue, we
make use of an extra discrete symmetry subgroup $\Gamma$ of the perturbed
functional $\mathcal{A}$. The problem of resonances can be avoided when
working on the fixed point space $X^{\Gamma}$ instead of $X$. This is allowed
by the principle of symmetric criticality of Palais \cite{Palaisa}. In this
case $x_{a}$ needs to be chosen such that $x_{a}$ $\in X^{\Gamma}$ and,
similarly, the symmetry group $G$ of $\mathcal{A}_{0}$ must be chosen so that
it leaves $X^{\Gamma}$ invariant. Note that there may be other solutions
outside of this fixed point space. We discuss below which discrete symmetry is
relevant for each case and which symmetry group $G$ must be taken. The
discrete symmetry also restricts the type of central configurations we can
braid, at least in the case $\mathbf{(C2)}$ and $\mathbf{(C3)}$.

\paragraph{(C1)}

No restriction is needed in this case, there are no resonances. We may take
$\Gamma$ to be the trivial group, $G=U(1)\times U(1)$ and $H=\widetilde{U(1)}%
$. We then study the critical points of $\mathcal{A}$ in $X^{\Gamma}=X $.

\paragraph{(C2)}

The bodies are now moving in the plane under the influence of the Newtonian
gravitational force. We can braid the central body of symmetric central
configurations which include symmetric configurations at the origin, the
Maxwell configuration and nested polygonal configurations with a center (see
section \ref{examples}). For each case, we can find a discrete symmetry group
$\Gamma$ that allows to deal with the resonances.

Let $S_{n}$ be the permutation group of $n$ letters and consider the discrete
subgroup $\Gamma<\mathbb{Z}_{m}\times S_{n}$ generated by a non-trivial
element $(\theta,\sigma)$ such that
\[
\theta=2\pi/m\in\mathbb{Z}_{m},\qquad\sigma^{m}=(1)\in S_{n},\qquad
\sigma(1)=1.
\]
This group acts on $X$ as follows: for $x\in X$ we have
\[
(\theta,\sigma)x(s)=(u_{0}(s+\theta),\exp(-\theta\mathcal{J})u_{\sigma
(1)}(s+\theta),\dots,\exp(-\theta\mathcal{J})u_{\sigma(n)}(s+\theta)).
\]

\textbf{(C2a)} The first assumption on the central configuration is that the
masses satisfy
\begin{equation}
M_{\ell}=M_{\sigma(\ell)}.
\end{equation}
The functional $\mathcal{A}_{0}$ is $\Gamma$-invariant because, in its
expression, the variables $u_{0}(s)$ and $u_{\ell}(s)$ are uncoupled.
Furthermore, in the next proposition we show that the coupling term
$\mathcal{H}$ is $\Gamma$-invariant. Thus the functional $\mathcal{A}$ is
$\Gamma$-invariant and we can restrict the study of its critical points to the
fixed point set $X^{\Gamma}$.

\begin{proposition}
Under condition \textbf{(C2a) }the action functional $\mathcal{H}(x)$ is
$\Gamma$-invariant
\end{proposition}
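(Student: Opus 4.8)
The plan is to establish the stronger pointwise identity $h(\tilde u_0,\tilde u)(s)=h(u_0,u)(s+\theta)$ for the integrand, where $\tilde x=(\theta,\sigma)x$, and then deduce the $\Gamma$-invariance of $\mathcal H$ by integrating and invoking $2\pi$-periodicity. Writing out the action, the transformed components are $\tilde u_0(s)=u_0(s+\theta)$ and $\tilde u_\ell(s)=\exp(-\theta\mathcal J)u_{\sigma(\ell)}(s+\theta)$ for $\ell=1,\dots,n$; in particular $\tilde u_1(s)=\exp(-\theta\mathcal J)u_1(s+\theta)$ since $\sigma(1)=1$. I would substitute these into the explicit integrand $h$ and track the two norms appearing in each summand.

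For the rotation factor I would invoke condition \textbf{(B)}, which forces $\tfrac{\omega-1}{\nu}=1$, so the integrand carries $\exp(s\mathcal J)$ in front of $u_0$. As $\mathcal J$ commutes with itself, $\exp(s\mathcal J)\tilde u_0(s)=\exp(s\mathcal J)u_0(s+\theta)=\exp(-\theta\mathcal J)\exp((s+\theta)\mathcal J)u_0(s+\theta)$, so the argument of the first potential becomes
\[
\tilde u_1(s)-\mu_j\varepsilon\exp(s\mathcal J)\tilde u_0(s)-\tilde u_k(s)=\exp(-\theta\mathcal J)\bigl(u_1(s+\theta)-\mu_j\varepsilon\exp((s+\theta)\mathcal J)u_0(s+\theta)-u_{\sigma(k)}(s+\theta)\bigr),
\]
and likewise $\tilde u_1(s)-\tilde u_k(s)=\exp(-\theta\mathcal J)(u_1(s+\theta)-u_{\sigma(k)}(s+\theta))$. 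Because $\exp(-\theta\mathcal J)\in U(d)\subset O(E)$ is a linear isometry, it drops out of both norms, leaving each summand equal to the corresponding summand of $h(u_0,u)(s+\theta)$ but with the mass $M_k$ and the index $k$ replaced by $M_k$ and $\sigma(k)$.

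It then remains to reconcile the indices. Here I would use assumption \textbf{(C2a)}, $M_k=M_{\sigma(k)}$, to replace $M_k$ by $M_{\sigma(k)}$ inside the sum and re-index by the bijection $k\mapsto\sigma(k)$ of $\{1,\dots,n\}$; as $k$ runs over all indices so does $\sigma(k)$, and the summand matches exactly the $k$th summand of $h(u_0,u)$ evaluated at time $s+\theta$. This yields $h(\tilde u_0,\tilde u)(s)=h(u_0,u)(s+\theta)$, whence $\mathcal H(\tilde x)=\int_0^{2\pi}h(u_0,u)(s+\theta)\,ds=\mathcal H(x)$ by periodicity. The only delicate point is the bookkeeping of the rotation factors: one must check that the single factor $\exp(-\theta\mathcal J)$ can be pulled out of every body position uniformly and commuted past $\exp(s\mathcal J)$, which is precisely where condition \textbf{(B)} (giving exponent $1$, equivalently $m=1$ in the periodicity condition) and the relation $\sigma(1)=1$ are essential — the body $u_1$ singled out in $h$ must be fixed by $\sigma$, while $u_0$ must transform by a pure time shift without rotation.
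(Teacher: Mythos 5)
Your proof is correct and takes essentially the same route as the paper's: both write out the transformed integrand, remove the common rotation $\exp(-\theta\mathcal{J})$ by isometry invariance of the norms, use $\sigma(1)=1$ together with the mass condition \textbf{(C2a)} to re-index the sum over $k$, and conclude from the pointwise identity $h\bigl((\theta,\sigma)x\bigr)(s)=h(x)(s+\theta)$ by integrating and invoking $2\pi$-periodicity. If anything, you are more explicit than the paper about the one place where \textbf{(C2a)} is actually needed (the re-indexing), which the paper performs without comment.
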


\begin{proof}
Consider the term $h$ given in \eqref{h in term of u} with $\left(
\omega-1\right)  /\nu=1$. Given $(\theta,\sigma)\in\Gamma$, we first write
explicitly $h\left(  (\theta,\sigma)(\varepsilon\exp\left(  s\mathcal{J}%
\right)  u_{0}(s),u(s))\right)  $. Using \eqref{h in term of u} this gives
\begin{align*}
&  \sum_{k=2}^{n}\sum_{j=0,1}M_{k}m_{j}\phi_{\alpha}(\Vert\exp(-\theta
\mathcal{J})u_{\sigma(1)}(s+\theta)-\mu_{j}\varepsilon\exp(s\mathcal{J}%
)u_{0}(s+\theta)-\exp(-\theta\mathcal{J})u_{\sigma(k)}(s+\theta)\Vert)\\
&  -\sum_{k=2}^{n}\sum_{j=0,1}M_{k}m_{j}\phi_{\alpha}(\Vert\exp(-\theta
\mathcal{J})u_{\sigma(1)}(s+\theta)-\exp(-\theta\mathcal{J})u_{\sigma
(k)}(s+\theta)\Vert).
\end{align*}
One can now use the $SO(2)$-invariance of the norms and $\sigma(1)=1$ to
rewrite this as
\begin{align*}
&  \sum_{k=2}^{n}\sum_{j=0,1}M_{k}m_{j}\phi_{\alpha}(\Vert u_{1}(s+\theta
)-\mu_{j}\varepsilon\exp((s+\theta)\mathcal{J})u_{0}(s+\theta)-u_{\sigma
(k)}(s+\theta)\Vert)\\
&  -\sum_{k=2}^{n}\sum_{j=0,1}M_{k}m_{j}\phi_{\alpha}(\Vert u_{1}%
(s+\theta)-u_{\sigma(k)}(s+\theta)\Vert).
\end{align*}
In particular, changing the indices in the summation with respect to $k$
implies that
\[
h\left(  (\theta,\sigma)(\varepsilon\exp\left(  s\mathcal{J}\right)
u_{0}(s),u(s))\right)  =h\left(  \varepsilon\exp\left(  (s+\theta
)\mathcal{J}\right)  u_{0}(s+\theta),u(s+\theta)\right)  .
\]
Since $\mathcal{H}$ is defined in the space of $2\pi$-periodic functions, we
get
\begin{align*}
\mathcal{H}((\theta,\sigma)x) &  =\int_{0}^{2\pi}h\left(  \varepsilon
\exp\left(  (s+\theta)\mathcal{J}\right)  u_{0}(s+\theta),u(s+\theta)\right)
~ds\\
&  =\int_{\theta}^{2\pi+\theta}h\left(  \varepsilon\exp(s^{\prime}%
\mathcal{J})u_{0}(s^{\prime}),u(s^{\prime})\right)  ~ds^{\prime}%
=\mathcal{H}(x)\text{.}%
\end{align*}

\end{proof}

\textbf{(C2b)} The second assumption (to ensure that $x_{a}\in X^{\Gamma}$) is
that the central configuration $a\in E^{n}$ satisfies the property
\begin{equation}
a_{\ell}=\exp(-\theta\mathcal{J})a_{\sigma(\ell)}.
\end{equation}
Since $\sigma^{m}=(1)$ and $\theta=2\pi/m$, conditions \textbf{(C2a)-(C2b)} imply that
the central configuration $a$ is symmetric by $2\pi/m$-rotations in the plane
and that $a_{1}=0$.

Symmetric configurations that satisfy this condition are discussed in section
\ref{examples}. In this case the group action of $U(1)\times U(1)$ on $X$
commutes with the action of $\Gamma$, then we can take $G=U(1)\times U(1)$ and
$H=\widetilde{U(1)}$.

\paragraph{(C3)}

We now consider the higher dimensional case; that is when the space of motion
$E$ is at least four dimensional. Let $\Gamma$ be the finite subgroup
isomorphic to $\mathbb{Z}_{2}$ whose generator $\zeta$ acts on $X$ as
follows:
\[
\zeta x(s)=(-\mathcal{R}u_{0}(s+\pi),\mathcal{R}u(s+\pi))\text{,}%
\]
where
\[
\mathcal{R}=-I_{2}\oplus I_{2}\oplus...\oplus I_{2}\in\mbox{End}(E)\text{.}%
\]

The functional $\mathcal{A}_{0}$ is $\Gamma$-invariant because $\mathcal{R}$
commutes with $\mathcal{J}$. Similarly, the functional $\mathcal{H}$ is
invariant because
\begin{align*}
\mathcal{H}(\zeta x)  &  =\int_{0}^{2\pi}h\left(  -\varepsilon\exp\left(
\left(  s-\pi\right)  \mathcal{J}\right)  \mathcal{R}u_{0}(s),\mathcal{R}%
u_{1}(s),...,\mathcal{R}u_{n}(s)\right)  ~ds\\
&  =\int_{0}^{2\pi}h\left(  \mathcal{R}\varepsilon\exp\left(  s\mathcal{J}%
\right)  u_{0}(s),\mathcal{R}u_{1}(s),...,\mathcal{R}u_{n}(s)\right)
~ds=\mathcal{H}(x)\text{.}%
\end{align*}
Therefore, the functional $\mathcal{A}$ is $\Gamma$-invariant and we can
restrict the study of critical points to the fixed point space $X^{\Gamma}$.
In this case we choose the symmetry group $G$ to be the maximal subgroup of
$U(d)\times U(d)$ acting on $X^{\Gamma}$. The groups are thus of the form
$G=G_{1}\times G_{2}$ and $H=\widetilde{G_{1}}$, where each $G_{i}$ is the
centraliser of $\mathcal{R}$ in $U(d)$; that is
\[
G_{i}=U(1)\times U(d-1).
\]

Note that $x_{a}=(a_{0},a)\in X^{\Gamma}$ if and only if $-\mathcal{R}%
a_{0}=a_{0}$ and $\mathcal{R}a_{j}=a_{j}$ for $j=1,..,n$. Therefore $x_{a}$
must be taken such that $a_{0}$ lies in the plane
\[
\Pi=\{(x,y,0,...,0)\}\subset E,
\]
and the central configuration $a$ consists of points lying in the orthogonal
complement $\Pi^{\perp}\subset E$. The choice of symmetry group $G$ ensures
that $G(x_{a})\subset X^{\Gamma}$. In dimension four $(d=2)$ the Kepler orbit
is located in a plane and the central configuration lies in an orthogonal plane.

Choosing the symmetry group $G$ and the path $x_{a}$ accordingly to one of the
assumptions $\mathbf{(C1)}$ or $\mathbf{(C2),(C3)}$, the equations
\eqref{gradient A_0 u} vanish along the orbit $G(x_{a})$ and the real question
to answer is whether some orbits of solutions along the orbit persist in the
space $X^{\Gamma}$ when considering the perturbation term $\mathcal{H}%
=\mathcal{O}(\varepsilon)$ for small $\varepsilon$. For this purpose, we
suppose that the collision-less neighbourhood $\Omega$ is of the form
\[
\Omega=\{x\in X\mid\exists g\in U(d)\times U(d),~~\left\Vert x-gx_{a}%
\right\Vert _{X}<\rho\}
\]
for some $\rho>0$. Then in further applications, one shall take an open subset
$\Omega^{\Gamma}\subset\Omega\cap X^{\Gamma}$ which is a $\rho$-neighbourhood
of radius $\rho$ around the group orbit $G(x_{a})$,

\begin{proposition}
The functional $\mathcal{A=A}_{0}+\mathcal{H}$ is well defined in
$\Omega\subset X$.
\end{proposition}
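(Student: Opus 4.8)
The goal is to show that $\mathcal{A}=\mathcal{A}_0+\mathcal{H}$ is well defined on the collision-less neighbourhood $\Omega\subset X$. The plan is to verify that, for every $x\in\Omega$, each of the integrals defining $\mathcal{A}_0(x)$ and $\mathcal{H}(x)$ converges to a finite real number. Since $\Omega$ is a $\rho$-tube around the $U(d)\times U(d)$-orbit of $x_a=(a_0,a)$, the key point is that $\rho$ can be chosen small enough that no configuration in $\Omega$ contains a collision, so that every distance appearing in the arguments of $\phi_\alpha$ stays bounded away from zero along the orbit.

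First I would treat $\mathcal{A}_0$. The kinetic part $\tfrac12\sum_j M_j\|(\nu\partial_s+\mathcal{J})u_j\|^2$ (and the analogous $u_0$-term with $\nu/\omega$) is finite for any $x\in X=H^1(\mathbb{S}^1,E^N)$, since $H^1$-paths have square-integrable derivatives and $\mathcal{J}$ is a bounded operator. For the potential part I would use the Sobolev embedding $H^1(\mathbb{S}^1)\hookrightarrow C^0(\mathbb{S}^1)$, which is available in dimension one: every $x\in X$ is continuous, so the paths $u_j(s)$ and the differences $u_j(s)-u_k(s)$ and $\|u_0(s)\|$ are continuous functions of $s$. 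The central configuration $a$ has all $a_j$ distinct and $a_0$ of unit length, so there is a positive lower bound on the relevant distances at $x_a$; because the orbit $g x_a$ is the image of a compact group acting by isometries, this lower bound is uniform over the whole orbit. Shrinking $\rho$ (using the $C^0$-control coming from the $X$-norm) then guarantees that for $x\in\Omega$ the quantities $\|u_0(s)\|$ and $\|u_j(s)-u_k(s)\|$ stay in a compact subinterval of $(0,\infty)$ on which $\phi_\alpha$ is continuous; hence each integrand is a bounded continuous function of $s$, and the integral over $[0,2\pi]$ is finite.

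Next I would handle the coupling $\mathcal{H}(x)=\int_0^{2\pi}h\,ds$. Here I would invoke the earlier observation that $h$ is analytic in a neighbourhood of $Q_0=0$ with $h=\mathcal{O}(\|Q_0\|)$, together with the explicit formula for $h$ in the normalised coordinates. The arguments of $\phi_\alpha$ in $h$ are of the form $\|u_1(s)-\mu_j\varepsilon\exp(\tfrac{\omega-1}{\nu}s\,\mathcal{J})u_0(s)-u_k(s)\|$; since $\|u_0(s)\|$ is bounded (by $C^0$-control) and $\varepsilon$ is small, these differences are $C^0$-close to the nonvanishing differences $\|u_1(s)-u_k(s)\|$ that already appear in $\mathcal{A}_0$. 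Thus, after possibly shrinking $\rho$ and $\varepsilon_0$, these arguments are also bounded away from zero, so $h(u_0(s),u(s))$ is a bounded continuous function of $s$ and $\mathcal{H}(x)$ is finite.

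The only genuinely delicate point is the uniformity of the distance lower bound across the full orbit and across the tube of radius $\rho$; I expect this to be the main obstacle, since a priori the $H^1$-norm controls only $C^0$-closeness and one must confirm that the isometric group action does not shrink inter-body distances. This is resolved by the compactness of $U(d)\times U(d)$ and the fact that the action is by isometries, so distances are invariant along the orbit and the single lower bound at $x_a$ propagates uniformly; the $C^0\!\hookleftarrow\! H^1$ embedding then converts the $X$-bound $\|x-gx_a\|_X<\rho$ into a uniform pointwise bound, completing the argument.
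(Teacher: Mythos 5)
Your proof is correct and follows essentially the same route as the paper: the Sobolev embedding $H^1(\mathbb{S}^1)\hookrightarrow C^0(\mathbb{S}^1)$ converts the $X$-neighbourhood $\Omega$ into a pointwise neighbourhood $\widetilde{\Omega}$ of the compact orbit $\{gx_a\}$, on which the potential terms and the coupling $h$ are collision-free (hence analytic and bounded) for $\rho$ small, so all integrands are bounded and the functional is finite. Your additional detail---that the isometric action of $U(d)\times U(d)$ preserves the mutual distances, making the lower bound at $x_a$ propagate uniformly along the orbit---is exactly the justification implicit in the paper's assertion that $U$ and $h$ are pointwise analytic on $\widetilde{\Omega}$.
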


\begin{proof}
Since $\left\Vert x\right\Vert _{C^{0}}\leq\gamma\left\Vert x\right\Vert _{X}$
by Sobolev embedding, the paths $x\in\Omega$ do not leave the pointwise
neighbourhood of the orbit
\[
\widetilde{\Omega}=\{y\in E^{N}\mid\exists g\in U(d)\times U(d),~~\left\Vert
y-gx_{a}\right\Vert _{E^{N}}<\gamma\rho\}.
\]
The potential energy\ $U$ and the nonlinear term $h$ are pointwise analytic
functions defined in $\widetilde{\Omega}$ if $\rho$ is small enough. Since
paths in $\Omega$ do not leave $\widetilde{\Omega}$, i.e. $x\in\Omega$ implies
$x(s)\in\widetilde{\Omega}$ for all $s\in\mathbb{S}^{1}$, the Euler functional
$\mathcal{A}$ is well defined in the region $\Omega\subset X$ if $\rho$ is
small enough.
\end{proof}

Hereafter, we use the Banach algebra property of $X$ and the analyticity of
$\mathcal{A}$ to obtain functional estimates of its derivatives. In
particular, we have the following estimate:

\begin{lemma}
\label{lemma: bound for gradient of H} There is a constant $N_{2}>0$ such that
the compact operator $\nabla\mathcal{H}:X\to X$ satisfies
\[
\left\Vert \nabla\mathcal{H}(x)\right\Vert _{X}\leq N_{2}\varepsilon\text{.}%
\]
uniformly for $x\in\Omega$ and small $\varepsilon$.
\end{lemma}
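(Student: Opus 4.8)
The plan is to reduce this Hilbert-space estimate to a uniform \emph{pointwise} bound on the Euclidean gradient of the integrand $h$, exploiting the gradient formula $\nabla\mathcal{H}=(-\partial_s^2+1)^{-1}\delta\mathcal{H}$ from the gradient-formulation subsection. Since the Riesz map $(-\partial_s^2+1)^{-1}\colon X^{-1}\to X$ is bounded, it suffices to bound $\|\delta\mathcal{H}(x)\|_{X^{-1}}$ uniformly on $\Omega$, and because $L^2=X^0$ embeds continuously in $X^{-1}$ it is in fact enough to bound $\delta\mathcal{H}(x)$ in $L^2$. First I would compute the first variation $d\mathcal{H}(x)(y)=\int_0^{2\pi}\langle\nabla h(x(s),s),y(s)\rangle\,ds$, so that $\delta\mathcal{H}(x)(s)$ is exactly the pointwise gradient of $h$ evaluated along $x(s)$. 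Condition \textbf{(B)} gives $(\omega-1)/\nu=1$, so the rotation occurring in $h$ is $\exp(s\mathcal{J})$, which is \emph{independent} of $\varepsilon$; this is what makes every subsequent estimate uniform in $\varepsilon$.

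The core of the argument is that \emph{every} component of this pointwise gradient is $\mathcal{O}(\varepsilon)$. Write $w_0:=\varepsilon\exp(s\mathcal{J})u_0$, so that $h$ depends on $u_0$ only through $w_0$, and recall from the expression for $h$ that $h(0,u)=0$ for all $u$ (the two $\phi_\alpha$-terms cancel when $w_0=0$). For the $u_0$-component the factor $\varepsilon$ is immediate: the chain rule gives $\nabla_{u_0}h=\varepsilon\exp(-s\mathcal{J})\nabla_{w_0}h$, and $\nabla_{w_0}h$ is bounded because $h$ is analytic on the bounded, collision-less pointwise neighbourhood $\widetilde{\Omega}$ in which every path $x\in\Omega$ remains (by $\|x\|_{C^0}\le\gamma\|x\|_X$, as in the previous proposition). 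For the components $\nabla_{u_k}h$ with $k\ge1$ the factor $\varepsilon$ is the delicate point: since $h$ vanishes identically on $\{w_0=0\}$, the derivatives tangent to that subspace vanish there too, so a first-order Taylor expansion in $w_0$, with remainder controlled by the bounded second derivatives of $h$ on $\widetilde{\Omega}$, yields $\nabla_{u_k}h=\mathcal{O}(\|w_0\|)$. As $\|w_0\|=\varepsilon\|u_0(s)\|\le C\varepsilon$ on $\Omega$, this gives $\nabla_{u_k}h=\mathcal{O}(\varepsilon)$ as well.

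Combining the two cases produces a uniform pointwise bound $\|\nabla h(x(s),s)\|_{E^N}\le N\varepsilon$ for all $s\in\mathbb{S}^1$ and $x\in\Omega$, whence $\|\delta\mathcal{H}(x)\|_{L^2}\le\sqrt{2\pi}\,N\varepsilon$ and, applying the bounded Riesz map, $\|\nabla\mathcal{H}(x)\|_X\le N_2\varepsilon$. Compactness of $\nabla\mathcal{H}\colon X\to X$ follows from the same factorisation: $\delta\mathcal{H}$ maps $X$ boundedly into $L^2$ (using the Banach-algebra property of $X=H^1(\mathbb{S}^1,E^N)$ and the analyticity of $\nabla h$ on $\widetilde{\Omega}$ to control the composition), while $(-\partial_s^2+1)^{-1}$ carries $L^2$ into $H^2$, and the embedding $H^2\hookrightarrow H^1=X$ on the compact circle is compact by Rellich. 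I expect the main obstacle to be precisely the second case: one must see that the small factor for the $n$-body derivatives arises not from an explicit prefactor but from the vanishing of $h$, hence of its tangential first derivatives, on $\{w_0=0\}$, and the Taylor remainder must be controlled uniformly over $\widetilde{\Omega}$ and uniformly in $\varepsilon$ — which is exactly where the $\varepsilon$-independence of $\exp(s\mathcal{J})$ secured by condition \textbf{(B)} is essential.
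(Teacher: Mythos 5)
Your proof is correct, and the mechanism by which you extract the factor $\varepsilon$ is exactly the paper's: for the $u_0$-derivative the chain rule produces the explicit prefactor $\varepsilon$ (the paper's formula for $\delta\mathcal{H}/\delta u_{0}$, with the bounded analytic kernel \eqref{c1}), and for the $u_k$-derivatives, $k\geq 1$, the smallness comes from the cancellation of the two $\phi_{\alpha}$-terms at $w_{0}=0$ --- the paper states that the difference \eqref{c2} is real analytic with vanishing constant term in its Taylor expansion in $\varepsilon$, which is precisely your observation that $h\equiv 0$ on $\{w_{0}=0\}$ forces the tangential first derivatives to vanish there, quantified by a Taylor expansion in $w_{0}$. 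Where you genuinely differ is the functional-analytic packaging. The paper estimates $\delta\mathcal{H}(x)$ in $X=H^{1}$ itself, using the Banach algebra property of $X$ together with the embedding $X\subset C^{0}$ to control the composition of the kernels \eqref{c1}--\eqref{c2} with paths in $\Omega$, and then applies the Riesz map; you instead estimate $\delta\mathcal{H}(x)$ pointwise, hence in $L^{2}$, and use $\Vert\nabla\mathcal{H}(x)\Vert_{X}\leq\Vert\delta\mathcal{H}(x)\Vert_{X^{-1}}\leq\Vert\delta\mathcal{H}(x)\Vert_{L^{2}}$ together with the smoothing property $(-\partial_{s}^{2}+1)^{-1}:L^{2}\rightarrow H^{2}$ and the Rellich embedding $H^{2}\hookrightarrow H^{1}$ for compactness. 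Your route is more elementary for this particular lemma, since sup-norm bounds on $\nabla h$ over $\widetilde{\Omega}$ suffice and the Banach algebra property is not needed for the estimate itself; the paper's $H^{1}$ packaging, on the other hand, is the one that Remark \ref{hessian} bootstraps to all higher derivatives of $\nabla\mathcal{H}$ (e.g.\ the order-$\varepsilon$ operator bound on $\nabla^{2}\mathcal{H}$), which is needed later in the Lyapunov--Schmidt reductions. Both arguments share the same harmless glossed-over points (boundedness of the derivatives of $h$ up to the boundary of $\widetilde{\Omega}$, and the segment joining $w_{0}=0$ to $w_{0}$ remaining collision-free, both fine for $\rho$ and $\varepsilon$ small), so I see no gap.
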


\begin{proof}
After setting $\frac{\omega-1}{\nu}=1$, the
integrand term in $\mathcal{H}(x)=\int_{0}^{2\pi}h(\varepsilon\exp
(s\mathcal{J})u_{0}(s),u(s))ds$ is
\[
h(\varepsilon\exp(s\mathcal{J})u_{0}(s),u(s))=\sum_{k=2}^{n}\sum_{j=0,1}%
M_{k}m_{j}\left(  \phi_{\alpha}(\Vert u_{1}(s)-\mu_{j}\varepsilon\exp\left(
\mathcal{J}s\right)  u_{0}(s)-u_{k}(s)\Vert)-\phi_{\alpha}(\Vert\left(
u_{1}(s)-u_{k}(s)\Vert\right)  \right)  
\]
by using \eqref{h in term of u}. A straightforward calculation yields
\[
\frac{\delta\mathcal{H}}{\delta u_{0}}=\varepsilon\sum_{k=2}^{n}\sum
_{j=0,1}M_{k}m_{j}\mu_{j}\exp(\mathcal{J}s)^{t}\left(  \frac{u_{1}%
(s)-u_{k}(s)-\mu_{j}\varepsilon\exp(\mathcal{J}s)u_{0}(s)}{\Vert
u_{1}(s)-u_{k}(s)-\mu_{j}\varepsilon\exp\left(  \mathcal{J}s\right)
u_{0}(s)\Vert^{\alpha+1}}\right)  .
\]
Notice that the term
\begin{equation}
\frac{u_{1}-u_{k}-\mu_{j}\varepsilon u_{0}}{\Vert u_{1}-u_{k}-\mu
_{j}\varepsilon u_{0}\Vert^{\alpha+1}}\label{c1}%
\end{equation}
is real analytic for $x=(u_{0},...,u_{n})\in\widetilde{\Omega}$ and small
$\varepsilon$, i.e. it satisfies%
\[
\left\Vert \frac{u_{1}-u_{k}-\mu_{j}\varepsilon u_{0}}{\Vert u_{1}-u_{k}%
-\mu_{j}\varepsilon u_{0}\Vert^{\alpha+1}}\right\Vert _{E}\leq C_{E},\qquad
x\in\widetilde{\Omega}\text{.}%
\]
Notice that $\left(  \exp(\mathcal{J}s)u_{0}(s),u_{1}(s),...,u_{n}(s)\right)
\in\widetilde{\Omega}$ pointwise for any $x\in\Omega$ by the embedding
$X\subset C^{0}$. By the Banach algebra property of $X$, we conclude that%
\[
\left\Vert \frac{u_{1}(s)-u_{k}(s)-\mu_{j}\varepsilon\exp(\mathcal{J}%
s)u_{0}(s)}{\Vert u_{1}(s)-u_{k}(s)-\mu_{j}\varepsilon\exp\left(
\mathcal{J}s\right)  u_{0}(s)\Vert^{\alpha+1}}\right\Vert _{X}\leq
C_{X},\qquad x\in\Omega\text{.}%
\]
Therefore, we have that $\nabla_{u_{0}}\mathcal{H}(x)=\left(  -\partial
_{s}^{2}+1\right)  ^{-1}\frac{\delta\mathcal{H}}{\delta u_{0}}$ is a compact
operator of order $\varepsilon$. That is, $\left\Vert \nabla_{u_{0}%
}\mathcal{H}(x)\right\Vert _{X}\leq N_{2}\varepsilon$ with the constant
$N_{2}$ independent of $x\in\Omega.$

Similarly, one obtains
\[
\frac{\delta\mathcal{H}}{\delta u_{1}}=-\sum_{k=2}^{n}\sum_{j=0,1}M_{k}%
m_{j}\left(  \frac{u_{1}(s)-\mu_{j}\varepsilon\exp\left(  \mathcal{J}s\right)
u_{0}(s)-u_{k}(s)}{\Vert u_{1}(s)-\mu_{j}\varepsilon\exp\left(  \mathcal{J}%
s\right)  u_{0}(s)-u_{k}(s)\Vert^{\alpha+1}}-\frac{u_{1}(s)-u_{k}(s)}{\Vert
u_{1}(s)-u_{k}(s)\Vert^{\alpha+1}}\right)  .
\]
Notice that the function%
\begin{equation}
\frac{u_{1}-\mu_{j}\varepsilon u_{0}-u_{k}}{\Vert u_{1}-\mu_{j}\varepsilon
u_{0}-u_{k}\Vert^{\alpha+1}}-\frac{u_{1}-u_{k}}{\Vert u_{1}-u_{k}\Vert
^{\alpha+1}}=\mathcal{O}_{E}(\varepsilon)\label{c2}%
\end{equation}
is real analytic for $x=(u_{0},...,u_{n})\in\widetilde{\Omega}$ and its Taylor
expansion with respect to $\varepsilon$ has vanishing constant term. We
conclude by a similar argument that $\left\Vert \nabla_{u_{1}}\mathcal{H}%
(x)\right\Vert _{X}\leq N_{2}\varepsilon$ with the constant $N_{2}$
independent of $x\in\Omega$. The result follows by noticing that $\frac
{\delta\mathcal{H}}{\delta u_{k}}=-\frac{\delta\mathcal{H}}{\delta u_{1}}$ for
$k=2,...,n$.
\end{proof}

\begin{remark}
\label{hessian} The functions (\ref{c1}) and (\ref{c2}) are real analytic for
$x\in\widetilde{\Omega}$ and small $\varepsilon$. By the Banach algebra
property of $X$, all the successive derivatives of $\nabla\mathcal{H}%
(x)=\left(  -\partial_{s}^{2}+1\right)  ^{-1}\delta\mathcal{H}(x)$ are bounded
operators with operator norms of order $\varepsilon$ for all $x\in\Omega$. In
particular, the operator norm of $\nabla^{2}\mathcal{H}(x):X\rightarrow X$ is
of order $\varepsilon$, $\left\Vert \nabla^{2}\mathcal{H}(x)\right\Vert
\leq C\varepsilon$ for $x\in\Omega$. Actually, the operator $\mathcal{A}$
and its components $\mathcal{A}_{0}$ and $\mathcal{H}$ are analytic
functionals in the domain $\Omega\subset X$ in the sense of definition 2.3.1
in \cite{Be77}.
\end{remark}

\section{Lyapunov-Schmidt reduction}

As before, we take the standard parametrisation $\mathbb{S}^{1}=\mathbb{R}%
/2\pi\mathbb{Z}$ and we identify
\[
X=H^{1}(\mathbb{S}^{1},E^{N})=\left\{  x\in L^{2}(\mathbb{S}^{1},E^{N}%
)\mid\sum_{\ell\in\mathbb{Z}}(\ell^{2}+1)\Vert\hat{x}_{\ell}\Vert^{2}%
<\infty\right\}  ,
\]
where $(\hat{x}_{\ell})$ is the sequence of Fourier coefficients in
$(E_{\mathbb{C}})^{N}=(E\oplus iE)^{N}$ satisfying $\hat{x}_{\ell}%
=\overline{\hat{x}}_{-\ell}$. Write an element $x\in X$ as a Fourier series
$x=\sum_{\ell\in\mathbb{Z}}\hat{x}_{\ell}e_{\ell}$ where $e_{\ell}%
:\mathbb{S}^{1}\rightarrow\mathbb{C}$ is given by $e_{\ell}(s)=e^{i\ell s}$.
Then we can write $X=X_{0}\oplus W$, where $X_{0}$ is the subspace of constant
loops and $W$ is the subspace of loops in $X$ having zero mean. Thus any
element $x\in X$ decomposes uniquely as $x=\xi+\eta$, where%
\[
\xi=\hat{x}_{0},\qquad\eta=\sum_{\ell\neq0}\hat{x}_{\ell}e_{\ell}.
\]
Denote by $P:X\rightarrow X_{0}$ the canonical projection onto $X_{0}$, then
$Px=\xi$ and $(I-P)x=\eta$, where $I$ denotes the identity on $X$. The system
of equations $\nabla\mathcal{A}(\xi+\eta)=0$ splits into
\begin{align*}
\nabla_{\xi}\mathcal{A}(\xi+\eta)  &  =P\nabla\mathcal{A}(\xi+\eta)=0\in
X_{0}~,\\
\nabla_{\eta}\mathcal{A}(\xi+\eta)  &  =(I-P)\nabla\mathcal{A}(\xi+\eta)=0\in
W.
\end{align*}

Reducing the system to finite dimension by mean of the Lyapunov-Schmidt
reduction requires to solve the equation $\nabla_{\eta}\mathcal{A}(\xi
+\eta)=0$. For this purpose, we define $F_{\varepsilon}:\Omega\subset
X\rightarrow W$ as the operator
\[
F_{\varepsilon}(\xi,\eta):=\mathcal{D}_{\varepsilon}\nabla_{\eta}%
\mathcal{A}(\xi+\eta),
\]
where $\mathcal{D}_{\varepsilon}\in\mbox{End}(E^{N})$ is the block diagonal
matrix
\begin{equation}
\mathcal{D}_{\varepsilon}=\varepsilon^{\alpha-1}\mathcal{I}\oplus
\varepsilon^{\alpha+1}\mathcal{I}\oplus\dots\oplus\varepsilon^{\alpha
+1}\mathcal{I},
\end{equation}
where $\mathcal{I}$ denotes the identity on $E$. Solving the second equation
is equivalent to solving $F_{\varepsilon}(\xi,\eta)=0$ for $\varepsilon\neq0$
because $\mathcal{D}_{\varepsilon}$ is an isomorphism. While $\nabla_{\eta
}\mathcal{A}(\xi+\eta)$ explodes as $\varepsilon\rightarrow0$, the function
$F_{\varepsilon}(\xi,\eta)$ is continuous at $\varepsilon=0$ because
$\lim_{\varepsilon\rightarrow0}\left(  \nu/\omega\right)  ^{2}=1$. Therefore,%
\[
F_{0}(\xi,\eta)=\lim_{\varepsilon\rightarrow0}\left(  \mathcal{D}%
_{\varepsilon}\nabla_{\eta}\mathcal{A}_{0}(\xi+\eta)\right)
\]
is well defined. Furthermore, $F_{0}(gx_{a},0)=0$ for all $g\in G$. Solving
$F_{\varepsilon}(\xi,\eta)=0$ requires the functional derivative
$\partial_{\eta}F_{0}[(gx_{a},0)]$ to be invertible on $W$. Although this is
true when working under condition $\mathbf{(C1)}$, the operator is not
invertible on the whole space $W$ under condition $\mathbf{(C2)-(C3)}$ (see
the lemma below). However, in those bad cases, the operator is invertible on
$W^{\Gamma}$.

We use the notations $X_{0}^{\Gamma}$ and $W^{\Gamma}$ to denote the
projections of $X^{\Gamma}=(X_{0}\oplus W)^{\Gamma}$ on the first and second
factor, respectively.

\begin{lemma}
\label{Estimate hessian A_0} Assume conditions $\mathbf{(A)}-\mathbf{(B)}$.
Under assumption $\mathbf{(C1)}$, the operator $\partial_{\eta}F_{0}%
[(gx_{a},0)]$ is invertible on $W$ for all $g\in G$, i.e. there is a constant
$c>0$ such that
\[
\Vert\partial_{\eta}F_{0}[(gx_{a},0)]^{-1}\eta\Vert\leq c\Vert\eta\Vert
\quad\mbox{for every}\quad\eta\in W,~g\in G.
\]
Under assumptions $\mathbf{(C2)}$ or $\mathbf{(C3)}$, the same result holds
when the operator $\partial_{\eta}F_{0}[(gx_{a},0)]$ is restricted to the
fixed point space $W^{\Gamma}$, with $\Gamma$ and $G$ chosen accordingly to
those assumptions.
\end{lemma}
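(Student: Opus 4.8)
The plan is to show that the limit operator $\partial_{\eta}F_{0}[(gx_{a},0)]$ is block diagonal — across the $N$ bodies and across the nonzero Fourier modes of $W$ — and then to read off invertibility from an explicit finite-dimensional symbol on each mode. First I would record the shape of $F_{0}$. Because $\mathcal{A}_{0}$ decouples the Kepler variable $u_{0}$ from the $n$-body variables $u=(u_{1},\dots,u_{n})$, and because $\mathcal{D}_{\varepsilon}$ multiplies each equation $\nabla_{u_{\ell}}\mathcal{A}_{0}$ ($\ell\geq1$) by $\varepsilon^{\alpha+1}$, the bounded $n$-body potential is annihilated as $\varepsilon\to0$. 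Using $\nu/\omega\to1$, $\varepsilon^{\alpha+1}\nu^{2}\to1$ and $\varepsilon^{\alpha+1}\nu\to0$ (conditions \textbf{(A)}--\textbf{(B)}), the blocks of $F_{0}$ in the variables $u_{\ell}$, $\ell\geq1$, collapse to the linear operators $(-\partial_{s}^{2}+1)^{-1}M_{\ell}(-\partial_{s}^{2})$, whereas the $u_{0}$-block keeps the full linearised Kepler operator. Hence $\partial_{\eta}F_{0}[(gx_{a},0)]$ is block diagonal in $(u_{0};u_{1},\dots,u_{n})$ and is diagonalised by the modes $e^{iks}$, $k\neq0$.

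Next I would dispose of the $n$-body blocks and isolate the Kepler block. On a mode $k\neq0$ the $u_{\ell}$-block ($\ell\geq1$) is multiplication by $M_{\ell}k^{2}/(k^{2}+1)\geq M_{\ell}/2$, so it is invertible with a uniform bound and plays no role in the symmetry discussion. For the Kepler block I would differentiate $-(\partial_{s}+\mathcal{J})^{2}u_{0}-u_{0}\Vert u_{0}\Vert^{-(\alpha+1)}$ at the unit vector $g_{1}a_{0}$; using $\mathcal{J}^{2}=-\mathcal{I}$, the block on mode $k$ equals $\tfrac{M_{0}}{k^{2}+1}L_{k}$ with
\[
L_{k}=k^{2}\mathcal{I}-2ik\mathcal{J}+(\alpha+1)\,(g_{1}a_{0})(g_{1}a_{0})^{t}.
\]
Since $i\mathcal{J}$ is Hermitian, $L_{k}$ is Hermitian, so its singular values are the moduli of its (real) eigenvalues. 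By $U(d)$-equivariance I may conjugate so that $g_{1}a_{0}$ is the first coordinate vector; then $L_{k}$ splits over the $d$ coordinate planes of $\mathcal{J}$, with $\det L_{k}=k^{2}(k^{2}+\alpha-3)$ on the plane carrying $a_{0}$ and $\det L_{k}=k^{2}(k^{2}-4)$ on each of the other $d-1$ planes.

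The three cases now follow from these determinants together with the Fourier content of $X^{\Gamma}$. In \textbf{(C1)} only the first factor is present and $k^{2}(k^{2}+\alpha-3)\neq0$ for all $k\neq0$ (as $\alpha\neq2$), so the operator is invertible on all of $W$. In \textbf{(C2)} the first factor vanishes exactly at $k=\pm1$; but the generator $(\theta,\sigma)$ acts on the $u_{0}$-coefficients by $\hat{u}_{k}\mapsto e^{ik\theta}\hat{u}_{k}$ with $\theta=2\pi/m$, so $W^{\Gamma}$ keeps only modes $k\in m\mathbb{Z}$, and $m\geq2$ removes $k=\pm1$, leaving $k^{2}(k^{2}-1)\neq0$. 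In \textbf{(C3)} the extra planes resonate at $k=\pm2$ (and, if $\alpha=2$, the first plane at $k=\pm1$); here $\zeta$ acts on mode $k$ by $\hat{u}_{k}\mapsto-(-1)^{k}\mathcal{R}\hat{u}_{k}$, which forces the coordinates of the first plane (where $\mathcal{R}=-I_{2}$) to survive only for even $k$ and the remaining coordinates only for odd $k$. The even resonance $k=\pm2$ is thereby deleted from the extra planes and the odd resonance $k=\pm1$ from the first plane, so every symbol surviving on $W^{\Gamma}$ is nonsingular.

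Finally I would upgrade pointwise invertibility to the asserted uniform lower bound. For the finitely many small surviving modes the Hermitian symbols $L_{k}$ have determinant bounded away from $0$; for large $|k|$ their eigenvalues are $k^{2}\pm2k+O(1)$, all positive, so the smallest exceeds $c_{0}(k^{2}+1)$ for a fixed $c_{0}>0$. Weighting by the $H^{1}$ factors, comparing $\Vert\tfrac{M_{0}}{k^{2}+1}L_{k}\hat{\eta}_{k}\Vert$ against $\Vert\hat{\eta}_{k}\Vert$ in the $(k^{2}+1)$-weighted norm yields $\Vert\partial_{\eta}F_{0}[(gx_{a},0)]\eta\Vert_{X}\geq c'\Vert\eta\Vert_{X}$ on $W$ (resp. on $W^{\Gamma}$), which is equivalent to the bound on the inverse. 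Uniformity in $g\in G$ is free: conjugating $a_{0}$ by $g_{1}$ merely conjugates each $L_{k}$, and since $G=G_{1}\times G_{2}$ preserves $X^{\Gamma}$ the same modes survive. The main obstacle is the bookkeeping of case \textbf{(C3)} — verifying that the $\mathbb{Z}_{2}$-action splits the modes by parity precisely so as to kill the $k=\pm2$ resonance on the extra planes (and the $k=\pm1$ resonance on the first plane) while preserving the $G$-equivariance used for uniformity; once that is in place the estimates are routine.
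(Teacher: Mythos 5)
Your proposal is correct and takes essentially the same approach as the paper's proof: both block-diagonalise the scaled limit operator over the nonzero Fourier modes, reduce the $n$-body block to $\frac{k^{2}}{k^{2}+1}\mathcal{M}$ and the Kepler block to the $2\times2$ symbols (you locate the resonances via the determinants $k^{2}(k^{2}+\alpha-3)$ and $k^{2}(k^{2}-4)$, the paper via the explicit eigenvalues $\lambda_{1,\ell}^{\pm},\lambda_{2,\ell}^{\pm}$), use the $\Gamma$-fixed-point conditions on the modes to excise the resonant frequencies in cases $\mathbf{(C2)}$--$\mathbf{(C3)}$, and obtain uniformity in $g\in G$ by the same conjugation/equivariance argument.
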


\begin{proof}
We first write the Hessian of $\mathcal{A}_{0}$ at $x_{a}$ as the block
diagonal matrix
\[
\nabla^{2}\mathcal{A}_{0}[x_{a}]=\nabla_{u_{0}}^{2}\mathcal{A}_{0}%
[x_{a}]\oplus\nabla_{u}^{2}\mathcal{A}_{0}[x_{a}].
\]
A straightforward calculation yields
\[
\nabla_{u_{0}}^{2}\mathcal{A}_{0}[x_{a}]=\left(  -\partial_{s}^{2}+1\right)
^{-1}M_{0}\varepsilon^{1-\alpha}\left(  -(\nu/\omega)^{2}\mathcal{I}%
\partial_{s}^{2}-2(\nu/\omega)\mathcal{J}\partial_{s}+(\alpha+1)a_{0}a_{0}%
^{t}\right)  ,
\]
where $a_{0}^{t}$ denotes the transpose of $a_{0}$. Similarly,
\[
\nabla_{u}^{2}\mathcal{A}_{0}[x_{a}]=\left(  -\partial_{s}^{2}+1\right)
^{-1}\left(  -\nu^{2}\mathcal{M}\partial_{s}^{2}-2\nu\mathcal{M}%
\mathcal{J}_{n}\partial_{s}+\nabla^{2}V[a]\right)  ,
\]
where $\mathcal{M}=M_{1}\mathcal{I}\oplus\dots\oplus M_{n}\mathcal{I}$ and
$\mathcal{J}_{n}=\mathcal{J}\oplus\dots\oplus\mathcal{J}$ are block diagonal
matrices, both with $n$ blocks of size $2d$.

Let $\eta=\sum_{\ell\neq0}\hat{x}_{\ell}e_{\ell}\in W$ and write
\[
\partial_{\eta}F_{0}[(x_{a},0)]\eta=\sum_{\ell\neq0}\hat{T}_{\ell}\hat
{x}_{\ell}e_{\ell}%
\]
where the matrix $\hat{T}_{\ell}$ is block diagonal of the form
\begin{equation}
\hat{T}_{\ell}=\hat{T}_{\ell,u_{0}}\oplus\hat{T}_{\ell,u}. \label{G_0 blocks}%
\end{equation}
Since the coefficients $\hat{x}_{\ell}$ do not depend on $s$ we get
\[
\partial_{s}\eta=\sum_{\ell\neq0}i\ell\hat{x}_{\ell}e_{\ell}\quad
\mbox{and}\quad\partial_{s}^{2}\eta=-\sum_{\ell\neq0}\ell^{2}\hat{x}_{\ell
}e_{\ell}.
\]
Since $\lim_{\varepsilon\rightarrow0}\left(  \nu/\omega\right)  =1$, the first
block in \eqref{G_0 blocks} is given by
\[
\hat{T}_{\ell,u_{0}}=\frac{M_{0}}{\ell^{2}+1}\left(  (\ell^{2}\mathcal{I}%
-2i\ell\mathcal{J}+(\alpha+1)a_{0}a_{0}^{t}\right)  .
\]
Without loss of generality, suppose $a_{0}=(1,0,\dots,0)\in E$. Hence the
block $\hat{T}_{\ell,u_{0}}$ is diagonal of the form%
\begin{equation}
\hat{T}_{\ell,u_{0}}=\frac{M_{0}}{\ell^{2}+1}\left(  \left(
\begin{array}
[c]{cc}%
\ell^{2}+(\alpha+1) & -2i\ell\\
2i\ell & \ell^{2}%
\end{array}
\right)  \bigoplus_{d-1}\left(
\begin{array}
[c]{cc}%
\ell^{2} & -2i\ell\\
2i\ell & \ell^{2}%
\end{array}
\right)  \right)  . \label{block}%
\end{equation}
The matrix $\hat{T}_{\ell,u_{0}}$ has eigenvalues%
\begin{equation}
\lambda_{1,\ell}^{\pm}=\frac{M_{0}}{\ell^{2}+1}\left(  \ell^{2}+\frac
{\alpha+1}{2}\pm\frac{1}{2}\sqrt{16\ell^{2}+(\alpha+1)^{2}}\right)  ,
\label{eigval1}%
\end{equation}
which appear with multiplicity one, and%
\[
\lambda_{2,\ell}^{\pm}=\frac{M_{0}}{\ell^{2}+1}\ell\left(  \ell\pm2\right)
\]
which appear with multiplicity $d-1$. We now study the invertibility for each
assumption $\mathbf{(C1)}$, $\mathbf{(C2)}$ and $\mathbf{(C3)}$.

\textbf{(C1) }Since we are working on the plane, the matrix $\hat{T}%
_{\ell,u_{0}}$ has only the two eigenvalues \eqref{eigval1}. Since $\ell\neq0$
and $\alpha\neq2$, these eigenvalues never vanish. This proves invertibility.

\textbf{(C2) }In Fourier components, $x$ is fixed by $\Gamma<\mathbb{Z}%
_{m}\times S_{n}$ if and only if $x(s)=(\theta,\sigma)x(s)$. This enforces
$u_{0}$ to be $2\pi/m$-periodic. Therefore, the Fourier expansion of $u_{0}$
is fixed by $\Gamma$ only if
\begin{equation}
u_{0,\ell}=0\text{ for }\ell\neq0,\pm m,\pm2m,... \label{zero condition1}%
\end{equation}
Since the eigenvalues of the matrix $\hat{T}_{\ell,u_{0}}$ are not singular
for $\alpha=2$ as long as $\ell\neq\pm1$, then the operator $\partial_{\eta
}F_{0}[(x_{a},0)]$ restricted to $W^{\Gamma}$ is invertible.

\textbf{(C3)} In Fourier components, $x$ is fixed by $\Gamma$ if an only if
\[
\sum_{\ell\in\mathbb{Z}}(u_{0,\ell},u_{\ell})e^{i\ell s}=x(s)=\zeta
x(s)=\sum_{\ell\in\mathbb{Z}}(-\mathcal{R}u_{0,\ell},\mathcal{R}u_{\ell
})e^{i\left(  \ell s+\pi\ell\right)  }.
\]
Set%
\[
u_{0,\ell}=u_{0,\ell}^{1}\oplus u_{0,\ell}^{2}.
\]
This implies that $\hat{x}_{\ell}=(u_{0,\ell},u_{\ell})\in E^{N}$ is fixed by
$\Gamma$ only if
\begin{align}
u_{0,\ell}^{1}  &  =0\text{ for }\ell\neq0,\pm2,\pm
4,...\label{zero condition2}\\
u_{0,\ell}^{2}  &  =0\text{ for }\ell\neq\pm1,\pm3,\pm5,...\nonumber
\end{align}
Since the eigenvalues of the matrix $\hat{T}_{\ell,u_{0}}$ for the component
$u_{0,\ell}^{1}$ are non zero as long as $\ell\neq\pm1$ (in the case
$\alpha=2$) and for the component $u_{0,\ell}^{2}$ if $\ell\neq\pm2$, then the
operator $\partial_{\eta}F_{0}[(x_{a},0)]$ restricted to $W^{\Gamma}$ is invertible.

The limits of the eigenvalues of $\hat{T}_{\ell,u_{0}}$ tends to $M_{0}$ when
$\ell\rightarrow\infty$. Since $\lim_{\varepsilon\rightarrow0}\left(
\varepsilon^{\alpha+1}\nu^{2}\right)  =1$, the second block in
\eqref{G_0 blocks} is
\[
\hat{T}_{\ell,u}=\frac{\ell^{2}}{\ell^{2}+1}\mathcal{M}\text{.}%
\]
Therefore, there is a constant $c>0$ (depending only on the masses) such that
any eigenvalue $\lambda$ of $\hat{T}_{\ell}$ satisfies $\left\vert
\lambda\right\vert \geq c^{-1}$. We conclude that the matrix $\hat{T}_{\ell}$
in \eqref{G_0 blocks} is invertible and we write%
\[
\partial_{\eta}F_{0}[(x_{a},0)]^{-1}\eta=\sum_{\ell\neq0}\hat{T}_{\ell}%
^{-1}\hat{x}_{\ell}e_{\ell},\quad\eta\in W^{\Gamma}.
\]
It follows that
\begin{equation}
\Vert\partial_{\eta}F_{0}[(x_{a},0)]^{-1}\eta\Vert\leq c\Vert\eta
\Vert.\nonumber
\end{equation}
Note that the Hessian $\nabla^{2}\mathcal{A}_{0}[gx_{a}]$ is conjugated to
$\nabla^{2}\mathcal{A}_{0}[x_{a}]$ because $\nabla\mathcal{A}_{0}$ is
$G$-equivariant. Hence $\partial_{\eta}F_{0}[(gx_{a},0)]$ and $\partial_{\eta
}F_{0}[(x_{a},0)]$ are conjugated. Therefore, the estimate for $\partial
_{\eta}F_{0}[(gx_{a},0)]$ holds independently of $g$ because the group $G$
acts by isometries.
\end{proof}

\begin{remark}
In the plane ($d=1$) and for the Newton gravitational force ($\alpha=2$), the
operator $\partial_{\eta}F_{0}[(gx_{a},0)]\ $is not invertible because
$\sqrt{3-\alpha}=1$ and $\lambda_{1,1}^{-}=0$, which is a consequence of the
fact that circular orbits of the Kepler problem with gravitational potential
are never isolated due to the existence of elliptic orbits. In the case of
more dimensions ($d>1$), the operator $\partial_{\eta}F_{0}[(gx_{a},0)]\ $is
never invertible in $W$ due to resonances of the circular orbit of the
generalized Kepler problem with its rotations in more dimensions. In both
cases, the operators are invertible only when we restrict the operator to
$W^{\Gamma}$.
\end{remark}

\begin{theorem}
[Lyapunov-Schmidt reduction]\label{thm:Lyapunov-Schmidt reduction} Assume
conditions $\mathbf{(A)}-\mathbf{(B)}$. Under one of the assumptions
$\mathbf{(C1)}-\mathbf{(C3)}$, there is $\varepsilon_{0}>0$ such that, for
every $\varepsilon\in(0,\varepsilon_{0})$, there is an open neighbourhood
$\mathcal{V}\subset X_{0}^{\Gamma}$ of the orbit $G(x_{a})$ and a smooth
$H$-equivariant mapping $\varphi_{\varepsilon}:\mathcal{V}\rightarrow
W^{\Gamma}$ such that solving $\nabla\mathcal{A}(\xi+\eta)=0$ for $\xi
\in\mathcal{V}$ is equivalent to solve the finite dimensional system of
equations $\nabla\Psi_{\varepsilon}(\xi)=0$ for $\xi\in\mathcal{V}$, where
\[
\Psi_{\varepsilon}(\xi)=\mathcal{A}(\xi+\varphi_{\varepsilon}(\xi))
\]
is the reduced functional. The fact that, for $\varepsilon\in(0,\varepsilon
_{0})$, the operator $F_{\varepsilon}(\xi,\eta)$ is analytic implies that the
implicit function $\varphi_{\varepsilon}(\xi)$ is also analytic.
\end{theorem}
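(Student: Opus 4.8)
The plan is to realise the reduction as a single implicit function theorem for the regularised map $F_\varepsilon$, carried out uniformly in $\varepsilon$ along the compact orbit $G(x_a)$. By the principle of symmetric criticality I first restrict the whole analysis to the fixed point space $X^\Gamma = X_0^\Gamma \oplus W^\Gamma$, so that it suffices to solve $F_\varepsilon(\xi,\eta) = \mathcal{D}_\varepsilon \nabla_\eta \mathcal{A}(\xi+\eta) = 0$ for $\eta = \varphi_\varepsilon(\xi) \in W^\Gamma$ with $\xi \in X_0^\Gamma$ near the orbit; since $\mathcal{D}_\varepsilon$ is an isomorphism, this is equivalent to the split equation $(I-P)\nabla\mathcal{A}=0$. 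The essential difficulty is that $\nabla_\eta\mathcal{A}$ blows up as $\varepsilon\to 0$, so a naive application of the theorem with $\varepsilon$-dependent constants would yield a neighbourhood collapsing to a point. This is precisely what $\mathcal{D}_\varepsilon$ and the preceding lemma are designed to cure: $F_\varepsilon$ extends continuously to $\varepsilon=0$ with $F_0(gx_a,0)=0$, and Lemma~\ref{Estimate hessian A_0} supplies a bound $\Vert \partial_\eta F_0[(gx_a,0)]^{-1}\Vert \le c$ that is uniform both in $\varepsilon$ (being the $\varepsilon=0$ limit) and in $g\in G$ (because $G$ acts by isometries).

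Since I expect only continuity of $F_\varepsilon$ in $\varepsilon$ at $0$, and not differentiability, I would avoid a parametrised implicit function theorem in $\varepsilon$ and instead run a quantitative contraction argument. Fixing the invertible operator $L_g = \partial_\eta F_0[(gx_a,0)]$, consider the Newton-type map $T_\varepsilon(\xi,\eta)=\eta - L_g^{-1}F_\varepsilon(\xi,\eta)$ on a ball $B_\rho\subset W^\Gamma$. Using Lemma~\ref{lemma: bound for gradient of H} and Remark~\ref{hessian} (the $\mathcal{O}(\varepsilon)$ bounds on $\nabla\mathcal{H}$ and $\nabla^2\mathcal{H}$) together with the continuity at $\varepsilon=0$ of the $\mathcal{A}_0$-part of $\partial_\eta F_\varepsilon$, I would show that for $\varepsilon<\varepsilon_0$ and $\xi$ in a fixed neighbourhood of $G(x_a)$ the residual $\Vert F_\varepsilon(\xi,0)\Vert$ is small and $\Vert \partial_\eta F_\varepsilon - L_g\Vert \le (2c)^{-1}$, so that $T_\varepsilon(\xi,\cdot)$ is a contraction of constant $\le 1/2$ mapping $B_\rho$ into itself. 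The Banach fixed point theorem then produces a unique $\varphi_\varepsilon(\xi)\in B_\rho\cap W^\Gamma$ with $F_\varepsilon(\xi,\varphi_\varepsilon(\xi))=0$, depending continuously on $(\varepsilon,\xi)$. Because $G(x_a)$ is compact and all constants are $g$-uniform, finitely many such balls cover the orbit, and the local solutions agree on overlaps by uniqueness, yielding a single $\varphi_\varepsilon$ on an open neighbourhood $\mathcal{V}\subset X_0^\Gamma$ of $G(x_a)$.

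Equivariance and analyticity then follow formally. Since $\mathcal{A}$ is $H$-invariant and $\mathcal{D}_\varepsilon$ commutes with the isometric $H$-action, $F_\varepsilon$ is $H$-equivariant; if $\eta$ solves $F_\varepsilon(\xi,\eta)=0$ then $h\eta$ solves $F_\varepsilon(h\xi,h\eta)=0$, and uniqueness of the fixed point forces $\varphi_\varepsilon(h\xi)=h\varphi_\varepsilon(\xi)$, so $\varphi_\varepsilon$ is $H$-equivariant. For each fixed $\varepsilon\in(0,\varepsilon_0)$ the map $F_\varepsilon(\xi,\cdot)$ is analytic and, by the contraction estimate, $\partial_\eta F_\varepsilon$ is invertible at the solution point; the analytic implicit function theorem applied there shows $\varphi_\varepsilon$ is analytic in $\xi$.

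It remains to identify the critical points. I would set $\Psi_\varepsilon(\xi)=\mathcal{A}(\xi+\varphi_\varepsilon(\xi))$ and differentiate by the chain rule, obtaining $\nabla\Psi_\varepsilon(\xi)=\nabla_\xi\mathcal{A}(\xi+\varphi_\varepsilon(\xi))+(D\varphi_\varepsilon(\xi))^{\ast}\nabla_\eta\mathcal{A}(\xi+\varphi_\varepsilon(\xi))$. Since $\varphi_\varepsilon(\xi)$ solves $\nabla_\eta\mathcal{A}(\xi+\varphi_\varepsilon(\xi))=0$ by construction, the second term vanishes and $\nabla\Psi_\varepsilon(\xi)=P\nabla\mathcal{A}(\xi+\varphi_\varepsilon(\xi))$. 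Hence $\nabla\Psi_\varepsilon(\xi)=0$ is equivalent to $P\nabla\mathcal{A}=0$, which combined with the automatically satisfied $(I-P)\nabla\mathcal{A}=0$ gives $\nabla\mathcal{A}(\xi+\varphi_\varepsilon(\xi))=0$; conversely, any zero of $\nabla\mathcal{A}$ whose $\xi$-component lies in $\mathcal{V}$ has $\eta$-component equal to $\varphi_\varepsilon(\xi)$ by uniqueness. The main obstacle throughout is the uniformity in $\varepsilon$, and the regularisation $\mathcal{D}_\varepsilon$ together with the $g$- and $\varepsilon$-uniform lower bound on $\partial_\eta F_0$ from Lemma~\ref{Estimate hessian A_0} is exactly what keeps both the contraction constant and the radius $\rho$ bounded away from the degeneracy as $\varepsilon\to 0$.
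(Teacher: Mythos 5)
Your proposal is correct and is essentially the paper's own argument: the same regularised map $F_\varepsilon=\mathcal{D}_\varepsilon\nabla_\eta\mathcal{A}$, the same key input (the $g$-uniform bound on $\partial_\eta F_0[(gx_a,0)]^{-1}$ from Lemma \ref{Estimate hessian A_0}), local solvability near each point $gx_a$, patching over the compact orbit with uniqueness forcing agreement on overlaps and $H$-equivariance, and the chain-rule identity $\nabla\Psi_\varepsilon(\xi)=P\nabla\mathcal{A}(\xi+\varphi_\varepsilon(\xi))$. The one genuine difference is that where the paper simply invokes the implicit function theorem with $(\varepsilon,\xi)$ as parameters at $(0,gx_a,0)$, you unroll its proof into an explicit contraction iteration $T_\varepsilon(\xi,\eta)=\eta-L_g^{-1}F_\varepsilon(\xi,\eta)$; this buys a proof that visibly needs only continuity of $F_\varepsilon$ in $\varepsilon$ (not differentiability in the parameter) and makes the uniformity of the contraction constant and of the ball radius in $g$ and $\varepsilon$ explicit, whereas the paper's appeal to the IFT implicitly relies on the parametrised version of that theorem, which indeed requires only continuity in parameters, so both are valid. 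Your separate application of the analytic implicit function theorem at the solution point for each fixed $\varepsilon>0$ (where $\partial_\eta F_\varepsilon$ is invertible by the Neumann-series perturbation of $L_g$) correctly recovers the analyticity claim that the paper states without further elaboration.
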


\begin{proof}
Lemma \ref{Estimate hessian A_0} ensures that, for every $g\in G$, the
operator $\partial_{\eta}F_{0}[(gx_{a},0)]$ restricted to $W^{\Gamma}$ has
bounded inverse. The implicit function theorem assures the existence of open
neighbourhoods $\mathcal{I}^{g}\subset\mathbb{R}$ of $0$ and $\mathcal{V}%
^{g}\subset X_{0}^{\Gamma}$ of $gx_{a}$ such that, for every $\varepsilon
\in\mathcal{I}^{g}$, there is a unique smooth mapping $\varphi_{\varepsilon
}^{g}:\mathcal{V}^{g}\rightarrow W^{\Gamma}$ such that the solutions of
\[
F_{\varepsilon}(\xi,\varphi_{\varepsilon}^{g}(\xi))=0\text{,}\quad\xi
\in\mathcal{V}^{g}%
\]
lie on $\eta=\varphi_{\varepsilon}^{g}(\xi)$. Since this argument is valid for
every $g\in G$, we can repeat this procedure until we obtain a cover of the
orbit $G(x_{a})$ by open sets $\mathcal{V}^{g}\subset X_{0}^{\Gamma}$ from
which we can extract a finite cover $\{\mathcal{V}^{g_{i}}\}_{i=1}^{n}$, by
compactness of the group orbit. We define open sets $\mathcal{V}=\cup
_{i=1}^{n}\mathcal{V}^{g_{i}}$ and $\mathcal{I}=\cap_{i=1}^{n}\mathcal{I}%
^{g_{i}}$. We take $\varepsilon_{0}$ small enough such that $(0,\varepsilon
_{0})\subset\mathcal{I}$. Hence for $\varepsilon\in(0,\varepsilon_{0})$ there
is a smooth mapping $\varphi_{\varepsilon}:\mathcal{V}\rightarrow W^{\Gamma}$,
defined by $\varphi_{\varepsilon}(\xi)=\varphi_{\varepsilon}^{g_{i}}(\xi)$
whenever $\xi\in\mathcal{V}^{g_{i}}$, such that the solutions of
\begin{equation}
F_{\varepsilon}(\xi,\varphi_{\varepsilon}(\xi))=0\text{,}\quad\xi
\in\mathcal{V} \label{pf 3.2: F_e equation}%
\end{equation}
lie on $\eta=\varphi_{\varepsilon}(\xi)$. Since $F_{\varepsilon}$ is
$H$-equivariant, both functions $(\xi,g^{-1}\varphi_{\varepsilon}(g\xi))$ and
$(\xi,\varphi_{\varepsilon}(\xi))$ are solutions of
\eqref{pf 3.2: F_e equation} for any $g\in H$. By uniqueness of solutions,
$\varphi_{\varepsilon}$ is $H$-equivariant. Note that we may have to take
$\mathcal{V}$ smaller such that if $\xi\in\mathcal{V}$ then $\xi
+\varphi_{\varepsilon}(\xi)$ lies in the open set $\Omega^{\Gamma}$, which is
the open neighbourhood in $X^{\Gamma}$ of $G(x_{a})$ we started with.

For fixed $\varepsilon\in(0,\varepsilon_{0})$ define the reduced functional
$\Psi_{\varepsilon}:\mathcal{V}\subset X_{0}^{\Gamma}\rightarrow\mathbb{R}$ by $\Psi_{\varepsilon}
(\xi):=\mathcal{A}(\xi+\varphi_{\varepsilon}(\xi))$. Then
\[
\nabla\Psi_{\varepsilon}(\xi)=P\nabla\mathcal{A}(\xi+\varphi_{\varepsilon}(\xi
))+\nabla_{\eta}\mathcal{A}(\xi+\varphi_{\varepsilon}(\xi))D_{\xi}%
\varphi_{\varepsilon}(\xi)=P\nabla\mathcal{A}(\xi+\varphi_{\varepsilon}%
(\xi)).
\]
Hence $\nabla\mathcal{A}(\xi+\eta)=0$ with $\xi\in\mathcal{V}$ if and only if
$\eta=\varphi_{\varepsilon}(\xi)$ and $\nabla\Psi_{\varepsilon}(\xi)=0$.
\end{proof}

\subsection{Estimate for the reduced functional}

Fix $\varepsilon\in(0,\varepsilon_{0})$ and write the reduced functional
$\Psi_{\varepsilon}:\mathcal{V}\rightarrow\mathbb{R}$ as $\Psi_{\varepsilon}(\xi)=\mathcal{A}_{0}%
(\xi)+\mathcal{N}(\xi)$, where
\[
\mathcal{N}(\xi)=\mathcal{A}_{0}(\xi+\varphi_{\varepsilon}(\xi))-\mathcal{A}%
_{0}(\xi)+\mathcal{H}(\xi+\varphi_{\varepsilon}(\xi)).
\]
The terms $\mathcal{A}_{0}(\xi)$ and $\mathcal{A}_{0}(\xi+\varphi
_{\varepsilon}(\xi))$ blow up as $\varepsilon\rightarrow0$ for $\alpha>1$. The
core of the main theorem resides in obtaining uniform estimates for
$\nabla\mathcal{N}(\xi)$. While the matrix $\mathcal{D}_{\varepsilon}$ scales
correctly the equation $\nabla_{\eta}\mathcal{A}(\xi+\eta)=0$, we need to
define another matrix that scales correctly the equation $\nabla_{\xi
}\mathcal{A}(\xi+\eta)=0$. Let
\begin{equation}
\mathcal{C}_{\varepsilon}:=\varepsilon^{\alpha-1}\mathcal{I}\oplus
\mathcal{I}\oplus\dots\oplus\mathcal{I}.
\end{equation}

\begin{lemma}
\label{lemma: estimate phi} Assume conditions $\mathbf{(A)}-\mathbf{(B)}$.
Under one of the assumptions $\mathbf{(C1)}-\mathbf{(C3)}$, there is a
constant $N_{1}>0$, independent of the parameter $\varepsilon\in
(0,\varepsilon_{0})$, such that
\[
\Vert\varphi_{\varepsilon}(\xi)\Vert\leq N_{1}(\varepsilon+\Vert\xi
-gx_{a}\Vert^{2})\quad\text{for every}\quad\xi\in\mathcal{V},~g\in G,
\]
where we may have to take a smaller neighborhood $\mathcal{V}$ of $G(x_{a})$.
\end{lemma}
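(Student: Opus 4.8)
The plan is to read $\varphi_\varepsilon(\xi)$ off the implicit equation $F_\varepsilon(\xi,\varphi_\varepsilon(\xi))=0$ and to estimate its size from two ingredients: a bound on the ``source'' $F_\varepsilon(\xi,0)$, and a uniform inverse for the $\eta$-linearization of $F_\varepsilon$ near the orbit $G(x_a)$. The decisive observation is that the source is already of order $\varepsilon$. Indeed, the functional $\mathcal{A}_0$ is autonomous, so its gradient at a constant loop $\xi\in X_0^\Gamma$ is again a constant loop; hence $(I-P)\nabla\mathcal{A}_0(\xi)=0$ and
\[
F_\varepsilon(\xi,0)=\mathcal{D}_\varepsilon(I-P)\nabla\mathcal{A}(\xi)=\mathcal{D}_\varepsilon(I-P)\nabla\mathcal{H}(\xi).
\]
Since $\alpha\ge 1$ we have $\|\mathcal{D}_\varepsilon\|\le 1$ for small $\varepsilon$, so Lemma \ref{lemma: bound for gradient of H} gives $\|F_\varepsilon(\xi,0)\|\le N_2\varepsilon$ uniformly in $\xi\in\mathcal{V}$.

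Next I would establish uniform invertibility of $\partial_\eta F_\varepsilon(\xi,\eta)$ on $W^\Gamma$ in a neighbourhood of the orbit. By Lemma \ref{Estimate hessian A_0}, $\partial_\eta F_0[(gx_a,0)]$ is invertible on $W^\Gamma$ with $\|\partial_\eta F_0[(gx_a,0)]^{-1}\|\le c$ uniformly in $g\in G$, the uniformity coming from $G$ acting by isometries. The map $(\varepsilon,\xi,\eta)\mapsto\partial_\eta F_\varepsilon(\xi,\eta)$ is continuous up to $\varepsilon=0$: the unperturbed part is continuous there because $\nu/\omega\to 1$ and $\varepsilon^{\alpha+1}\nu^2\to 1$ by conditions $\mathbf{(A)}$--$\mathbf{(B)}$, and its dependence on $(\xi,\eta)$ enters only through the potential, which stays uniformly Lipschitz after the scaling $\mathcal{D}_\varepsilon$; the perturbed part $\mathcal{D}_\varepsilon(I-P)\nabla^2\mathcal{H}(\xi+\eta)(I-P)$ is of order $\varepsilon$ by Remark \ref{hessian}. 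Choosing for each $\xi$ a nearest orbit point $g_\xi x_a$ and writing
\[
\partial_\eta F_\varepsilon(\xi,\eta)=\partial_\eta F_0[(g_\xi x_a,0)]+B,\qquad \|B\|\le C\bigl(\varepsilon+\|\xi-g_\xi x_a\|+\|\eta\|\bigr),
\]
a Neumann series shows that, after shrinking $\varepsilon_0$ and $\mathcal{V}$ and restricting to small $\|\eta\|$, the operator $\partial_\eta F_\varepsilon(\xi,\eta)$ is invertible on $W^\Gamma$ with $\|\partial_\eta F_\varepsilon(\xi,\eta)^{-1}\|\le 2c$.

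To close the estimate I would use the fundamental theorem of calculus in $\eta$. Because $F_0(\xi,0)=0$ for every constant $\xi$, the uniqueness in Theorem \ref{thm:Lyapunov-Schmidt reduction} forces $\varphi_0\equiv 0$ on $\mathcal{V}$; hence, after possibly shrinking $\mathcal{V}$ and $\varepsilon_0$, the implicit function is uniformly small, so the whole segment $t\mapsto t\,\varphi_\varepsilon(\xi)$ stays in the region where the previous step applies. Setting $L_\xi:=\int_0^1\partial_\eta F_\varepsilon(\xi,t\varphi_\varepsilon(\xi))\,dt$, the identity $0=F_\varepsilon(\xi,\varphi_\varepsilon(\xi))=F_\varepsilon(\xi,0)+L_\xi\varphi_\varepsilon(\xi)$ together with $\|L_\xi^{-1}\|\le 2c$ yields
\[
\|\varphi_\varepsilon(\xi)\|\le 2c\,\|F_\varepsilon(\xi,0)\|\le 2cN_2\,\varepsilon.
\]
Taking $N_1:=2cN_2$ gives $\|\varphi_\varepsilon(\xi)\|\le N_1\varepsilon\le N_1(\varepsilon+\|\xi-gx_a\|^2)$ for \emph{every} $g\in G$ (the left side is independent of $g$, and the right side is $\ge N_1\varepsilon$), which is the claim and is in fact slightly stronger.

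The main obstacle is to keep every estimate uniform in $\varepsilon$ even though $\mathcal{A}_0$ itself blows up as $\varepsilon\to 0$. This is exactly why one works with the rescaled operator $F_\varepsilon=\mathcal{D}_\varepsilon\nabla_\eta\mathcal{A}$ rather than with $\nabla_\eta\mathcal{A}$: the diagonal matrix $\mathcal{D}_\varepsilon$ is engineered so that $\partial_\eta F_\varepsilon$ admits a finite, invertible limit as $\varepsilon\to 0$, which is what makes the Neumann-series perturbation argument uniform. The delicate verification is therefore the continuity of the unperturbed linearization at $\varepsilon=0$, resting on the cancellations $\nu/\omega\to1$ and $\varepsilon^{\alpha+1}\nu^2\to1$, together with the uniform lower bound on the eigenvalues of $\hat T_\ell$ obtained in Lemma \ref{Estimate hessian A_0} once the resonant Fourier modes are excluded by passing to $W^\Gamma$.
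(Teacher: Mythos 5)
Your proposal is correct, and it follows a genuinely different route from the paper's proof, yielding a strictly stronger conclusion. The paper also starts from the two key inputs you use (Lemma \ref{lemma: bound for gradient of H} for the $\mathcal{O}(\varepsilon)$ source and Lemma \ref{Estimate hessian A_0} for the uniform inverse), but it then Taylor-expands $\nabla U_{0}$ around the \emph{fixed} orbit point $x_{a}$, inverts only the frozen operator $\mathcal{D}_{\varepsilon}\nabla_{\eta}^{2}\mathcal{A}_{0}[x_{a}]$, and absorbs the quadratic remainder $N_{3}\Vert\xi-x_{a}+\varphi_{\varepsilon}(\xi)\Vert^{2}$; this is precisely what produces the term $\Vert\xi-gx_{a}\Vert^{2}$ in the statement. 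You instead linearise at the actual base point $\xi$ via the integral mean value theorem, which requires upgrading Lemma \ref{Estimate hessian A_0} to uniform invertibility of $\partial_{\eta}F_{\varepsilon}(\xi,\eta)$ on a whole neighbourhood of the orbit (your Neumann-series step, which is legitimate since the base-point dependence of $\mathcal{D}_{\varepsilon}\nabla^{2}_{\eta}\mathcal{A}_{0}$ enters only through the potential, uniformly Lipschitz on $\widetilde{\Omega}$ by the Banach algebra property, and the $\mathcal{H}$-part is $\mathcal{O}(\varepsilon)$ by Remark \ref{hessian}). Your two structural observations are sound and are not exploited in this form by the paper: first, since $\mathcal{A}_{0}$ is autonomous, $\nabla\mathcal{A}_{0}(\xi)$ is a constant loop for constant $\xi$, so $F_{\varepsilon}(\xi,0)=\mathcal{D}_{\varepsilon}(I-P)\nabla\mathcal{H}(\xi)=\mathcal{O}(\varepsilon)$ uniformly; second, $F_{0}(\xi,0)=0$ for \emph{all} constant $\xi$, so uniqueness in the implicit function theorem forces $\varphi_{0}\equiv0$, and joint continuity plus compactness (note $X_{0}^{\Gamma}$ is finite dimensional and $G(x_{a})$ is compact, so a closed tubular neighbourhood is compact) gives the qualitative smallness needed to keep the segment $t\varphi_{\varepsilon}(\xi)$ in the region of uniform invertibility, avoiding any circularity. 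The payoff is the bound $\Vert\varphi_{\varepsilon}(\xi)\Vert\leq 2cN_{2}\varepsilon$ with no quadratic term, which shows the $\Vert\xi-gx_{a}\Vert^{2}$ in the lemma is an artifact of the paper's choice of linearisation point rather than a genuine feature; the paper's version is weaker but suffices for its later use in Theorem \ref{N}, and its proof has the mild advantage of needing invertibility only at the orbit points themselves.
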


\begin{proof}
By theorem \ref{thm:Lyapunov-Schmidt reduction} the implicit mapping
$\varphi_{\varepsilon}(\xi)$ solves the equation
\[
\nabla_{\eta}\mathcal{A}_{0}(\xi+\varphi_{\varepsilon}(\xi))=-\nabla_{\eta
}\mathcal{H}(\xi+\varphi_{\varepsilon}(\xi)).
\]
for $\xi\in\mathcal{V}$. Since we can take $\varepsilon_{0}<1$ and $(I-P)$ is
a projection, there is a constant $N_{2}>0$ such that
\begin{equation}
\Vert\nabla_{\eta}\mathcal{A}_{0}(\xi+\varphi_{\varepsilon}(\xi))\Vert
=\Vert\nabla\mathcal{H}(\xi+\varphi_{\varepsilon}(\xi))\Vert\leq
N_{2}\varepsilon\label{AH}%
\end{equation}
by Lemma \ref{lemma: bound for gradient of H}. Define the operator
$\mathcal{L}:X\rightarrow X$ by
\[
\mathcal{L}=M_{0}\left(  \frac{\nu}{\omega}\partial_{s}+\mathcal{J}\right)
^{2}\oplus M_{1}\left(  \nu\partial_{s}+\mathcal{J}\right)  ^{2}%
\oplus...\oplus M_{n}\left(  \nu\partial_{s}+\mathcal{J}\right)  ^{2}.
\]
For $x\in X$ given by $x(s)=(u_{0}(s),u(s))$, set
\[
U_{0}(x)=M_{0}\phi_{\alpha}(\Vert u_{0}\Vert)+\sum_{1\leq j<k\leq n}M_{j}%
M_{k}\phi_{\alpha}(\Vert u_{j}-u_{k}\Vert).
\]
We have that
\begin{equation}
\mathcal{C}_{\varepsilon}\nabla_{\eta}\mathcal{A}_{0}(x)=(I-P)\left(
-\partial_{s}^{2}+1\right)  ^{-1}(-\mathcal{L}x+\nabla U_{0}(x)).
\label{main:eq3bis}%
\end{equation}

Hereafter we use the fact that the differential operator $\left(
-\partial_{s}^{2}+1\right)  ^{-1}\mathcal{L}:X\rightarrow X$ and the
projection $(I-P)$ commute, because they are block diagonal operators in
Fourier components (\ref{OpFou}). Thus $(I-P)\left(  -\partial_{s}%
^{2}+1\right)  ^{-1}\mathcal{L}\xi=0$ for any $\xi\in\mathcal{V}$ and
\[
\mathcal{C}_{\varepsilon}\nabla_{\eta}\mathcal{A}_{0}(\xi+\varphi
_{\varepsilon}(\xi))=(I-P)\left(  -\partial_{s}^{2}+1\right)  ^{-1}\left(
-\mathcal{L}\varphi_{\varepsilon}(\xi)+\nabla U_{0}(\xi+\varphi_{\varepsilon
}(\xi))\right)  ,
\]
for any $\xi\in\mathcal{V}$.

Since $X$ is a Banach algebra and $U_{0}(x)$ is analytic in $\Omega\subset X$,
we can perform a Taylor expansion of $\nabla U_{0}(\xi+\varphi_{\varepsilon
}(\xi))$ around $\xi=x_{a}$ in $X$. In particular, there is a ball
$\mathcal{B}_{\delta}\subset\mathcal{V}$ of radius $\delta>0$ (independent of
the parameter $\varepsilon$ because $U_{0}$ does not depend on $\varepsilon$)
centered at $x_{a}$ such that, if $\xi\in\mathcal{B}_{\delta}$, the following
inequality holds
\[
\left\Vert \nabla U_{0}(\xi+\varphi_{\varepsilon}(\xi))-\nabla^{2}U_{0}%
[x_{a}]\left(  \xi-x_{a}+\varphi_{\varepsilon}(\xi)\right)  \right\Vert \leq
N_{3}\Vert\xi-x_{a}+\varphi_{\varepsilon}(\xi)\Vert^{2}
\]
for some positive constant $N_{3}$. Since the norms of the operator $\left(
-\partial_{s}^{2}+1\right)  ^{-1}:X\rightarrow X$ and $(I-P):X\rightarrow W$
are smaller or equal to $1$ then, for $\xi\in\mathcal{B}_{\delta}$,
\begin{equation}
\Vert\mathcal{C}_{\varepsilon}\nabla_{\eta}\mathcal{A}_{0}(\xi+\varphi
_{\varepsilon}(\xi))-\mathcal{C}_{\varepsilon}\nabla_{\eta}^{2}\mathcal{A}%
_{0}[x_{a}]\varphi_{\varepsilon}(\xi)\Vert\leq N_{3}\Vert\xi-x_{a}%
+\varphi_{\varepsilon}(\xi)\Vert^{2}. \label{eq: main 3}%
\end{equation}
By the triangle inequality,
\[
\Vert\mathcal{C}_{\varepsilon}\nabla_{\eta}^{2}\mathcal{A}_{0}[x_{a}%
]\varphi_{\varepsilon}(\xi)\Vert\leq\Vert\mathcal{C}_{\varepsilon}\nabla
_{\eta}\mathcal{A}_{0}[\xi+\varphi_{\varepsilon}(\xi)]\Vert+N_{3}\Vert
\xi-x_{a}+\varphi_{\varepsilon}(\xi)\Vert^{2}.
\]

Since $\left\Vert \mathcal{D}_{\varepsilon}\right\Vert \leq\left\Vert
\mathcal{C}_{\varepsilon}\right\Vert \leq1$ if $\varepsilon_{0}<1$, we
conclude using (\ref{AH}) that
\begin{equation}
\Vert\mathcal{D}_{\varepsilon}\nabla_{\eta}^{2}\mathcal{A}_{0}[x_{a}%
]\varphi_{\varepsilon}(\xi)\Vert\leq N_{2}\varepsilon+N_{3}\Vert\xi-x_{a}%
\Vert^{2}+N_{3}\Vert\varphi_{\varepsilon}(\xi)\Vert^{2}. \label{eq:main 5}%
\end{equation}
In lemma \ref{Estimate hessian A_0} we obtained a uniform bound $c>0$ for the
inverse of the operator $\partial_{\eta}F_{0}[x_{a}]=\lim_{\varepsilon
\rightarrow0}\mathcal{D}_{\varepsilon}\nabla_{\eta}^{2}\mathcal{A}_{0}[x_{a}%
]$. Since $\mathcal{D}_{\varepsilon}\nabla_{\eta}^{2}\mathcal{A}_{0}[x_{a}]$
is continuous at $\varepsilon=0$, then
\[
\Vert\left(  \mathcal{D}_{\varepsilon}\nabla_{\eta}^{2}\mathcal{A}_{0}%
[x_{a}]\right)  ^{-1}\Vert\leq2\Vert\partial_{\eta}F_{0}[x_{a}]^{-1}\Vert
\leq2c
\]
for $\varepsilon\in(0,\varepsilon_{0})$ with $\varepsilon_{0}$ small enough.
Taking $\eta=\mathcal{D}_{\varepsilon}\nabla_{\eta}^{2}\mathcal{A}_{0}%
[x_{a}]\varphi_{\varepsilon}(\xi)$, we conclude that
\[
\Vert\varphi_{\varepsilon}(\xi)\Vert=\left\Vert \left(  \mathcal{D}%
_{\varepsilon}\nabla_{\eta}^{2}\mathcal{A}_{0}[x_{a}]\right)  ^{-1}%
\eta\right\Vert \leq2c\Vert\eta\Vert=2c\Vert\mathcal{D}_{\varepsilon}%
\nabla_{\eta}^{2}\mathcal{A}_{0}[x_{a}]\varphi_{\varepsilon}(\xi)\Vert~.
\]
By \eqref{eq:main 5} and the previous inequality we obtain%
\[
\frac{1}{2c}\Vert\varphi_{\varepsilon}(\xi)\Vert\leq N_{2}\varepsilon
+N_{3}\Vert\xi-x_{a}\Vert^{2}+N_{3}\Vert\varphi_{\varepsilon}(\xi)\Vert^{2}.
\]
By choosing the ball radius $\delta$ small enough such that $N_{3}\Vert
\varphi_{\varepsilon}(\xi)\Vert<\frac{1}{4c}$ we get
\[
\Vert\varphi_{\varepsilon}(\xi)\Vert\leq4c\left(  N_{2}\varepsilon+N_{3}%
\Vert\xi-x_{a}\Vert^{2}\right)  ,
\]
whenever $\xi\in\mathcal{B}_{\delta}$. We obtain the result with
$N_{1}:=4c\max\left(  N_{2},N_{3}\right)  .$

This procedure gives the constant $N_{1}$ of the statement independent of
$\varepsilon\in(0,\varepsilon_{0})$. This estimate holds on a neighbourhood of
the orbit $G(x_{a})$ and not only in a neighbourhood of $x_{a}$. Indeed, since
the constants $N_{2}$ and $c$ do not depend on the point of the orbit, we
could work around another point $gx_{a}$ of the orbit and obtain the same
estimates in a ball $\mathcal{B}_{\delta^{g}}\subset\mathcal{V}$. By
compactness of the orbit, there is $\delta>0$ such that the orbit can be
covered by balls of radius $\delta$ and the estimate \eqref{eq:
main 3} holds at each point of the orbit. Therefore, all the estimates are
valid in the union of balls of radius $\delta$ that we rename $\mathcal{V}$.
\end{proof}

\begin{theorem}
[Uniform estimate]\label{N} Assume conditions $\mathbf{(A)}-\mathbf{(B)}$.
Under one of the assumptions $\mathbf{(C1)}-\mathbf{(C3)}$, the reduced
functional $\Psi_{\varepsilon}:\mathcal{V}\rightarrow\mathbb{R}$ can be written as $\Psi_{\varepsilon}
(\xi)=\mathcal{A}_{0}(\xi)+\mathcal{N}(\xi)\text{,}$ where $\mathcal{N}(\xi)$
is $H$-equivariant and satisfies the uniform estimate
\[
\Vert\mathcal{C}_{\varepsilon}\nabla\mathcal{N}(\xi)\Vert\leq N(\varepsilon
+\Vert\xi-gx_{a}\Vert^{2}),
\]
for all $\varepsilon\in(0,\varepsilon_{0})$ and $g\in G$, with $N>0$ a
constant independent on the parameters.
\end{theorem}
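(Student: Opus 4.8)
The plan is to compute $\nabla\mathcal{N}(\xi)$ explicitly, use the scaling identity behind \eqref{main:eq3bis} to split it into a linear kinetic piece and a nonlinear potential piece, and then observe that projecting onto the constant Fourier mode annihilates exactly the part that would otherwise blow up as $\varepsilon\to 0$. The $H$-invariance of $\mathcal{N}$ (equivalently, the $H$-equivariance of $\nabla\mathcal{N}$) is immediate: since $\varphi_{\varepsilon}$ is $H$-equivariant by Theorem \ref{thm:Lyapunov-Schmidt reduction} and both $\mathcal{A}$ and $\mathcal{A}_{0}$ are $H$-invariant (as $H\subset\widetilde{U(d)}\subset U(d)\times U(d)$), for $g\in H$ one has $\Psi_{\varepsilon}(g\xi)=\mathcal{A}(g\xi+g\varphi_{\varepsilon}(\xi))=\mathcal{A}(g(\xi+\varphi_{\varepsilon}(\xi)))=\Psi_{\varepsilon}(\xi)$ and $\mathcal{A}_{0}(g\xi)=\mathcal{A}_{0}(\xi)$, so $\mathcal{N}(\xi)=\Psi_{\varepsilon}(\xi)-\mathcal{A}_{0}(\xi)$ is $H$-invariant.

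For the estimate I would first write the gradient. Using $\nabla\Psi_{\varepsilon}(\xi)=P\nabla\mathcal{A}(\xi+\varphi_{\varepsilon}(\xi))$ from the proof of Theorem \ref{thm:Lyapunov-Schmidt reduction}, together with $\nabla(\mathcal{A}_{0}|_{X_{0}})(\xi)=P\nabla\mathcal{A}_{0}(\xi)$ (the decomposition $X=X_{0}\oplus W$ is orthogonal in $(\cdot,\cdot)_{X}$), I get
\[
\nabla\mathcal{N}(\xi)=P\left[\nabla\mathcal{A}_{0}(\xi+\varphi_{\varepsilon}(\xi))-\nabla\mathcal{A}_{0}(\xi)\right]+P\nabla\mathcal{H}(\xi+\varphi_{\varepsilon}(\xi)).
\]
Since $\mathcal{C}_{\varepsilon}^{-1}=\varepsilon^{1-\alpha}\mathcal{I}\oplus\mathcal{I}\oplus\dots\oplus\mathcal{I}$ matches the prefactors appearing in \eqref{gradient A_0 u}, the gradient formulas give the full-gradient version of \eqref{main:eq3bis}, namely $\mathcal{C}_{\varepsilon}\nabla\mathcal{A}_{0}(x)=(-\partial_{s}^{2}+1)^{-1}(-\mathcal{L}x+\nabla U_{0}(x))$, where I used that $\mathcal{C}_{\varepsilon}$ commutes with $(-\partial_{s}^{2}+1)^{-1}$. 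Applying $\mathcal{C}_{\varepsilon}$ to $\nabla\mathcal{N}$ (it commutes with $P$, both being diagonal on Fourier modes) and using the linearity of $\mathcal{L}$ to collapse the kinetic part of the difference to $-\mathcal{L}\varphi_{\varepsilon}(\xi)$, I obtain
\[
\mathcal{C}_{\varepsilon}\nabla\mathcal{N}(\xi)=P(-\partial_{s}^{2}+1)^{-1}\left[-\mathcal{L}\varphi_{\varepsilon}(\xi)+\nabla U_{0}(\xi+\varphi_{\varepsilon}(\xi))-\nabla U_{0}(\xi)\right]+P\mathcal{C}_{\varepsilon}\nabla\mathcal{H}(\xi+\varphi_{\varepsilon}(\xi)).
\]

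The crucial step is the disappearance of the $\mathcal{L}$-term. Because $\mathcal{L}$ and $(-\partial_{s}^{2}+1)^{-1}$ are block diagonal in Fourier components, they commute with the projection $P$ onto the zero mode, so $P(-\partial_{s}^{2}+1)^{-1}\mathcal{L}\varphi_{\varepsilon}(\xi)=(-\partial_{s}^{2}+1)^{-1}\mathcal{L}\,P\varphi_{\varepsilon}(\xi)=0$, since $\varphi_{\varepsilon}(\xi)\in W^{\Gamma}$ has zero mean. This is precisely what kills the factor $\nu^{2}\sim\varepsilon^{-(\alpha+1)}$ that $(-\partial_{s}^{2}+1)^{-1}\mathcal{L}$ carries on nonconstant modes, and it is the main obstacle of the whole estimate: without projecting onto constants this term is unbounded as $\varepsilon\to 0$. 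For the surviving potential term, analyticity of $U_{0}$ on $\Omega$ and the Banach algebra property of $X$ yield a uniform Lipschitz bound $\Vert\nabla U_{0}(\xi+\varphi_{\varepsilon}(\xi))-\nabla U_{0}(\xi)\Vert\le C\Vert\varphi_{\varepsilon}(\xi)\Vert$ (after shrinking $\mathcal{V}$ so the segment joining $\xi$ to $\xi+\varphi_{\varepsilon}(\xi)$ stays in $\Omega$), and since $\Vert(-\partial_{s}^{2}+1)^{-1}\Vert,\Vert P\Vert\le 1$ this contribution is $\le C\Vert\varphi_{\varepsilon}(\xi)\Vert$. Finally, $\Vert P\mathcal{C}_{\varepsilon}\nabla\mathcal{H}\Vert\le\Vert\nabla\mathcal{H}\Vert\le N_{2}\varepsilon$ by Lemma \ref{lemma: bound for gradient of H}, using $\Vert\mathcal{C}_{\varepsilon}\Vert\le 1$.

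Combining these bounds with Lemma \ref{lemma: estimate phi}, $\Vert\varphi_{\varepsilon}(\xi)\Vert\le N_{1}(\varepsilon+\Vert\xi-gx_{a}\Vert^{2})$, gives $\Vert\mathcal{C}_{\varepsilon}\nabla\mathcal{N}(\xi)\Vert\le(CN_{1}+N_{2})(\varepsilon+\Vert\xi-gx_{a}\Vert^{2})$, with all constants independent of $\varepsilon$. Uniformity over the whole orbit $G(x_{a})$ follows exactly as in Lemma \ref{lemma: estimate phi}: the constants $c,N_{2},N_{3}$ and the Lipschitz constant of $\nabla U_{0}$ are invariant along the orbit because $\nabla\mathcal{A}_{0}$ is $G$-equivariant and $G$ acts by isometries, so the argument runs around each $gx_{a}$ and the compact orbit is covered by finitely many balls, which yields the constant $N$ of the statement.
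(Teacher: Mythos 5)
Your proposal is correct and follows essentially the same route as the paper's proof: both reduce to bounding $P\bigl[\nabla\mathcal{A}_{0}(\xi+\varphi_{\varepsilon}(\xi))-\nabla\mathcal{A}_{0}(\xi)\bigr]$ and $P\nabla\mathcal{H}$, kill the singular kinetic term via the commutation of $\mathcal{L}$ and $\left(-\partial_{s}^{2}+1\right)^{-1}$ with $P$ together with $P\varphi_{\varepsilon}(\xi)=0$, bound the surviving potential term by $C\Vert\varphi_{\varepsilon}(\xi)\Vert$, and conclude with Lemmas \ref{lemma: bound for gradient of H} and \ref{lemma: estimate phi}. The only (immaterial) difference is ordering: the paper applies the mean value theorem to the full difference $\nabla\mathcal{A}_{0}(\xi+\varphi_{\varepsilon}(\xi))-\nabla\mathcal{A}_{0}(\xi)$ and then splits the Hessian into $-\mathcal{L}+\nabla^{2}U_{0}$, whereas you split off the linear operator $\mathcal{L}$ first and then apply the Lipschitz bound to $\nabla U_{0}$ alone.
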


\begin{proof}
Note that
\[
\mathcal{C}_{\varepsilon}\nabla\mathcal{N}(\xi)=\mathcal{C}_{\varepsilon
}P\left[  \nabla\mathcal{A}_{0}(\xi+\varphi_{\varepsilon}(\xi))-\nabla
\mathcal{A}_{0}(\xi)\right]  +\mathcal{C}_{\varepsilon}P\nabla\mathcal{H}%
(\xi+\varphi_{\varepsilon}(\xi)).
\]
Since the operator norms of $\mathcal{C}_{\varepsilon}$ and $P$ are bounded by
$1$, there is a constant $N_{2}>0$ such that $\left\Vert P\mathcal{C}%
_{\varepsilon}\nabla\mathcal{H}(\xi+\varphi_{\varepsilon}(\xi))\right\Vert
\leq N_{2}\varepsilon$. By the triangle inequality%
\[
\Vert\mathcal{C}_{\varepsilon}\nabla\mathcal{N}(\xi)\Vert\leq\Vert
\mathcal{C}_{\varepsilon}P\left[  \nabla\mathcal{A}_{0}(\xi+\varphi
_{\varepsilon}(\xi))-\nabla\mathcal{A}_{0}(\xi)\right]  \Vert+N_{2}%
\varepsilon\text{.}%
\]
Applying the mean value theorem, there is some $\mu\in\lbrack0,1]$ such that
\begin{equation}
\mathcal{C}_{\varepsilon}\left[  \nabla\mathcal{A}_{0}(\xi+\varphi
_{\varepsilon}(\xi))-\nabla\mathcal{A}_{0}(\xi)\right]  =\mathcal{C}%
_{\varepsilon}\nabla^{2}\mathcal{A}_{0}[\xi+\mu\varphi_{\varepsilon}%
(\xi)]\varphi_{\varepsilon}(\xi).\label{mean value thm}%
\end{equation}
Using the notations of the previous lemma, the Hessian reads
\[
\mathcal{C}_{\varepsilon}\nabla^{2}\mathcal{A}_{0}[\xi+\mu\varphi
_{\varepsilon}(\xi)]=\left(  -\partial_{s}^{2}+1\right)  ^{-1}\left(
-\mathcal{L}+\nabla^{2}U_{0}[\xi+\mu\varphi_{\varepsilon}(\xi)]\right)
\text{.}%
\]
Since the operator $\mathcal{L}$ commutes with $P$ and $P\varphi_{\varepsilon
}(\xi)=0$, then
\[
P\mathcal{C}_{\varepsilon}\nabla^{2}\mathcal{A}_{0}[\xi+\mu\varphi
_{\varepsilon}(\xi)]\varphi_{\varepsilon}(\xi)=P\left(  -\partial_{s}%
^{2}+1\right)  ^{-1}\nabla^{2}U_{0}[\xi+\mu\varphi_{\varepsilon}(\xi
)]\varphi_{\varepsilon}(\xi).
\]
Therefore by \eqref{mean value thm} and the fact that the norm of $\left(
-\partial_{s}^{2}+1\right)  ^{-1}$ is bounded by $1$, we obtain%
\[
\Vert P\mathcal{C}_{\varepsilon}\left(  \nabla\mathcal{A}_{0}(\xi
+\varphi_{\varepsilon}(\xi))-\nabla\mathcal{A}_{0}(\xi)\right)  \Vert\leq\Vert
P\nabla^{2}U_{0}[\xi+\mu\varphi_{\varepsilon}(\xi)]\varphi_{\varepsilon}%
(\xi)\Vert\leq e\Vert\varphi_{\varepsilon}(\xi)\Vert,
\]
for some constant $e>0$ independent of $\mu$, which exists because the
operator $P\nabla^{2}U_{0}[\xi+\mu\varphi_{\varepsilon}(\xi)]$ is bounded
independently of the parameter $\varepsilon$ because $U_{0}$ does not depend
on $\varepsilon$. The result of the statement follows from lemma
\ref{lemma: estimate phi} by setting $N:=eN_{1}+N_{2}$.
\end{proof}

\section{Critical points of the reduced functional}

Let us summarise what we achieved so far. Suppose $\varepsilon\in
(0,\varepsilon_{0})$ and conditions $\mathbf{(A)-(B)}$ are satisfied. Then,
under one of the assumptions $\mathbf{(C1)}-\mathbf{(C3)}$, there is a
neighbourhood $\mathcal{V}\subset X_{0}^{\Gamma}$ of the orbit $G(x_{a})$ such
that the problem of finding a solution $x=\xi+\eta\in X^{\Gamma}$ of the
Euler-Lagrange equations \eqref{gradient A_0 u} is reduced to finding a
solution $\xi\in\mathcal{V}\subset X_{0}^{\Gamma}$ of $\nabla\Psi
_{\varepsilon}(\xi)=0$. Furthermore, the reduced functional is given by
\[
\Psi_{\varepsilon}(\xi)=\mathcal{A}_{0}(\xi)+\mathcal{N}(\xi),
\]
where $\mathcal{A}_{0}(\xi)$ is $G$-invariant, $\mathcal{H}(\xi)$ is
$H$-invariant, and $\varphi_{\varepsilon}(\xi)$ is $H$-equivariant, where
$H\subset G$.

The critical points of $\Psi_{\varepsilon}(\xi)$ cannot be obtained directly
by a continuation of solutions of $\nabla\Psi_{\varepsilon}(\xi)=0$ using the
parameter $\varepsilon\in(0,\varepsilon_{0})$ because $\varepsilon$ encodes
the distance between the pair of bodies, and the function $\nabla
\Psi_{\varepsilon}(\xi)$ explodes as $\varepsilon\rightarrow0$ when $\alpha
>1$. Before proceeding with the continuation of solutions we need to solve
first the singular part of $\nabla\Psi_{\varepsilon}(\xi)$. For logarithm
potentials (case $\alpha=1$), it is still possible to continue the solutions
directly from $\Psi_{\varepsilon}(\xi)$ for $\varepsilon=0$. For instance, in
\cite{Ba17}, this approach is used for a Hamiltonian system corresponding to
the $n$-vortex problem.

\subsection{The regular functional}

In this section we obtain a regular functional by passing to the quotient
space $\xi\in\mathcal{V}\subset X_{0}^{\Gamma}$ under the action of the group
$H$. Let
\[
\xi=(\xi_{0},\xi_{1})\in X_{0}^{\Gamma}=E_{0}\times E^{\prime}.
\]
and recall that the group $G=G_{1}\times G_{2}$ acts diagonally on
$X_{0}^{\Gamma}$. Under the conditions $\mathbf{(C1)-(C3)}$, we have that
$G_{1}\left(  a_{0}\right)  $ is a unit circle $S(E_{0})$ in $E_{0}$. In the case $\mathbf{(C1)-(C2)}$ this
follows from the fact that $E_{0}=E$ is the plane and $G_{1}$ acts as
$U(1)\ $on the plane. In the case $\mathbf{(C3)}$ we have that $E_{0}$ is the plane $\Pi\subset E$ and
$G_{1}=U(1)\times U(d-1)$ acts as $U(1)\ $on the plane $E_{0}=\Pi$.

Notice that we chose $a_{0}=(1,0,\dots,0)$ for $d\geq1$. Thus for every
$\xi_{0}\in E_{0}$ we can find $h\in G_{1}$ such that $\xi_{0}=rha_{0}$ for
some $r\in\mathbb{R}$. Since $H=\widetilde{G_{1}}$, we obtain
\[
\Psi_{\varepsilon}(\xi_{0},\xi_{1})=\Psi_{\varepsilon}(h^{-1}\xi_{0},h^{-1}%
\xi_{1})=\Psi_{\varepsilon}(ra_{0},h^{-1}\xi_{1})\text{,}%
\]
by using $H$-invariance. Setting $\xi^{\prime}=h^{-1}\xi_{1}$ one obtains that
$\Psi_{\varepsilon}(\xi_{0},\xi_{1})=\Psi_{\varepsilon}(ra_{0},\xi^{\prime})$
depends only on the variables $\left(  r,\xi^{\prime}\right)  $. In particular
the solutions of $\nabla\Psi_{\varepsilon}(\xi_{0},\xi_{1})=0$ are in one to
one correspondence with the solutions of
\[
\partial_{r}\Psi_{\varepsilon}(ra_{0},\xi^{\prime})=0\quad\mbox{and}\quad
\nabla_{\xi^{\prime}}\Psi_{\varepsilon}(ra_{0},\xi^{\prime})=0.
\]
Furthermore, we observe that the function $\Psi_{\varepsilon}(ra_{0}%
,\xi^{\prime})$ is $H_{a_{0}}$-invariant, where
\[
H_{a_{0}}:=\left\{  g\in G_{2}\mid\;g\in(G_{1})_{a_{0}}\right\}
\]
is the stabiliser of $a_{0}$ in $H$. Note that $H_{a_{0}}$ is only acting on
the second component because it is a subgroup of $G_{2}$.

\begin{remark}
In the case $d=1$, we can use polar coordinates to write $\xi_{0}=re^{i\theta
}$. Similarly $\xi_{1}=(\rho_{1}e^{i\theta_{1}%
},...,\rho_{n}e^{i\theta_{n}})$. Then the 'reduced' variable is $\xi^{\prime
}=(\rho_{1}e^{i\theta_{1}^{\prime}},...,\rho_{n}e^{i\theta_{n}^{\prime}})$
where ${\theta}_{j}^{\prime}=\theta_{j}-\theta$.
\end{remark}

\begin{theorem}
\label{main regular} Under conditions $\mathbf{(C1)-(C3)}$, for $\varepsilon
\in(0,\varepsilon_{0})$, the critical points of $\Psi_{\varepsilon}(\xi)$ in
the (possibly smaller) neighbourhood $\mathcal{V}\subset X_{0}^{\Gamma}$ are
in one to one correspondence with the critical points of the $H_{a_{0}}%
$-invariant function $\Psi_{\varepsilon}^{\prime}:\mathcal{V}^{\prime}\subset
E^{\prime}\rightarrow\mathbb{R}$ given by
\[
\Psi_{\varepsilon}^{\prime}(\xi^{\prime})=V(\xi^{\prime})+\mathcal{N}^{\prime
}(\xi^{\prime})\text{,}%
\]
where $\mathcal{V}^{\prime}\subset E^{\prime}$ is a neighbourhood of
$G_{2}(a)$, $V(\xi^{\prime})$ is the amended potential as in
\eqref{amended potential V} and
\[
\mathcal{N}^{\prime}(\xi^{\prime})=\mathcal{A}_{0}(r_{\varepsilon}\left(
\xi^{\prime}\right)  a_{0},\xi^{\prime})-V(\xi^{\prime})+\mathcal{N}%
(r_{\varepsilon}(\xi^{\prime})a_{0},\xi^{\prime})
\]
where $r_{\varepsilon}:\mathcal{V}^{\prime}\subset E^{\prime}\rightarrow
\mathbb{R}$ is the unique $H_{a_{0}}$-invariant function that solves the
equation $\partial_{r}\Psi_{\varepsilon}(r_{\varepsilon}(\xi^{\prime}%
)a_{0},\xi^{\prime})=0$. Furthermore there are constants $N^{\prime}%
,N_{1}^{\prime}>0$ such that for each $g\in G_{2}$,
\[
\Vert r_{\varepsilon}(\xi^{\prime})\Vert\leq N_{1}^{\prime}(\varepsilon
+\Vert\xi^{\prime}-ga\Vert^{2})\quad\mbox{and}\quad\Vert\nabla_{\xi^{\prime}%
}\mathcal{N}^{\prime}(\xi^{\prime})\Vert\leq N^{\prime}(\varepsilon+\Vert
\xi^{\prime}-ga\Vert^{2}).
\]

\end{theorem}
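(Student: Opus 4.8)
The plan is to perform a second Lyapunov--Schmidt reduction inside the finite-dimensional space $X_0^\Gamma=E_0\times E'$, eliminating the radial Kepler variable and retaining only the $n$-body variable $\xi'$. As established just before the statement, $H$-invariance lets us write $\xi_0=rha_0$ with $h\in G_1$ and $r\in\R$, put $\xi'=h^{-1}\xi_1$, and reduce the equation $\nabla\Psi_\varepsilon(\xi_0,\xi_1)=0$ on $\mathcal V$ to the pair
\[
\partial_r\Psi_\varepsilon(ra_0,\xi')=0,\qquad \nabla_{\xi'}\Psi_\varepsilon(ra_0,\xi')=0 .
\]
First I would solve the scalar left-hand equation for $r=r_\varepsilon(\xi')$ and feed it into the right-hand one.

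The mechanism is a separation of variables on the constant-loop slice. Evaluating $\mathcal A_0$ at $(ra_0,\xi')$, and using that $\mathcal J$ is an isometry together with $\|a_0\|=1$, the $r$-dependence lies entirely in the Kepler term while the $\xi'$-dependence is exactly the amended potential \eqref{amended potential V}, namely
\[
\mathcal A_0(ra_0,\xi')=\varepsilon^{1-\alpha}M_0\Bigl(\tfrac12 r^2+\phi_\alpha(r)\Bigr)+V(\xi') .
\]
Hence $\partial_r\Psi_\varepsilon=\varepsilon^{1-\alpha}M_0(r-r^{-\alpha})+\partial_r\mathcal N(ra_0,\xi')$. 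The factor $\varepsilon^{1-\alpha}$ explodes as $\varepsilon\to0$, but multiplying by the Kepler block $\varepsilon^{\alpha-1}$ of $\mathcal C_\varepsilon$ regularises the equation into
\[
(r-r^{-\alpha})+\frac{\varepsilon^{\alpha-1}}{M_0}\,\partial_r\mathcal N(ra_0,\xi')=0 ,
\]
in which $\varepsilon^{\alpha-1}\partial_r\mathcal N$ is a component of the Kepler block of $\mathcal C_\varepsilon\nabla\mathcal N$ and is therefore uniformly $\mathcal O(\varepsilon+\|\xi'-ga\|^2)$ by Theorem \ref{N}. At $\varepsilon=0$ with $\xi'\in G_2(a)$ this reduces to $r-r^{-\alpha}=0$, whose only nearby root is the unperturbed radius $r=1$, and the $r$-derivative there equals $1+\alpha\neq0$. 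The implicit function theorem, made uniform in $\xi'$ and $\varepsilon$ by the estimates of Theorem \ref{N} and the analyticity recorded in Remark \ref{hessian}, then yields a unique analytic solution $r_\varepsilon(\xi')$ near $1$.

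I would then record the residual symmetry and close the reduction. Since $H_{a_0}$ fixes $a_0$ and acts only on $\xi'$, whereas $\Psi_\varepsilon(ra_0,\cdot)$ is $H_{a_0}$-invariant, uniqueness of the implicit root forces $r_\varepsilon(g\xi')=r_\varepsilon(\xi')$ for $g\in H_{a_0}$, so that $\Psi'_\varepsilon(\xi'):=\Psi_\varepsilon(r_\varepsilon(\xi')a_0,\xi')$ is $H_{a_0}$-invariant. Differentiating and invoking $\partial_r\Psi_\varepsilon=0$ at $r=r_\varepsilon$ annihilates the chain-rule contribution, giving $\nabla_{\xi'}\Psi'_\varepsilon(\xi')=\nabla_{\xi'}\Psi_\varepsilon(r_\varepsilon a_0,\xi')$; this is precisely the asserted one-to-one correspondence between critical points. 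The decomposition $\Psi'_\varepsilon=V+\mathcal N'$, with $\mathcal N'$ as in the statement, is then immediate from $\Psi_\varepsilon=\mathcal A_0+\mathcal N$ and the separation above.

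It remains to prove the two estimates. For $r_\varepsilon$, reading the regularised equation as $(1+\alpha)(r_\varepsilon-1)+\mathcal O(|r_\varepsilon-1|^2)=-M_0^{-1}\varepsilon^{\alpha-1}\partial_r\mathcal N(r_\varepsilon a_0,\xi')$ and inverting the nondegenerate linear part bounds the deviation of $r_\varepsilon$ from its unperturbed value $1$ by $N_1'(\varepsilon+\|\xi'-ga\|^2)$, using Theorem \ref{N} on the right. For $\mathcal N'$, the same cancellation $\partial_r\Psi_\varepsilon=0$ shows that every term carrying $\nabla_{\xi'}r_\varepsilon$ drops out, leaving $\nabla_{\xi'}\mathcal N'(\xi')=\nabla_{\xi'}\mathcal N(r_\varepsilon a_0,\xi')$, whose norm is controlled by the $\xi'$-blocks of $\mathcal C_\varepsilon\nabla\mathcal N$ (on which $\mathcal C_\varepsilon$ acts as the identity), hence again $\mathcal O(\varepsilon+\|\xi'-ga\|^2)$ by Theorem \ref{N}. \textbf{The hard part} is uniformity: one must ensure that the implicit-function domain for $r_\varepsilon$ and all constants are independent of $\varepsilon\in(0,\varepsilon_0)$ and of the base point $ga$ on the compact orbit $G_2(a)$. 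This is exactly where the rescaling $\mathcal C_\varepsilon$ and the orbit-uniform bounds of Lemma \ref{lemma: estimate phi} and Theorem \ref{N} are indispensable, combined with a finite-covering argument over $G_2(a)$ as in Theorem \ref{thm:Lyapunov-Schmidt reduction}.
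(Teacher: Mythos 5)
Your proposal is correct and follows essentially the same route as the paper's proof: the same splitting of $\mathcal{A}_0(ra_0,\xi^{\prime})$ into the Kepler radial part plus the amended potential $V$, the same regularisation of the $r$-gradient by the factor $\varepsilon^{\alpha-1}$ from $\mathcal{C}_{\varepsilon}$, the implicit function theorem (with a covering argument over the compact orbit $G_{2}(a)$) to produce $r_{\varepsilon}$, uniqueness of the implicit root for the $H_{a_{0}}$-invariance, the chain-rule cancellation $\partial_{r}\Psi_{\varepsilon}=0$ for the one-to-one correspondence and for $\nabla_{\xi^{\prime}}\mathcal{N}^{\prime}=\nabla_{\xi^{\prime}}\mathcal{N}(r_{\varepsilon}a_{0},\cdot)$, and Theorem \ref{N} plus a Lemma \ref{lemma: estimate phi}-type absorption for the uniform estimates. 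One minor imprecision: Theorem \ref{N} bounds the Kepler block of $\mathcal{C}_{\varepsilon}\nabla\mathcal{N}$ by $N(\varepsilon+|r-1|^{2}+\Vert\xi^{\prime}-ga\Vert^{2})$ rather than $N(\varepsilon+\Vert\xi^{\prime}-ga\Vert^{2})$, which is harmless because the extra $|r-1|^{2}$ terms are absorbed exactly as in your estimate step, and because (as the paper makes explicit via $\lim_{\varepsilon\rightarrow0}\mathcal{C}_{\varepsilon}\nabla^{2}\mathcal{N}(x_{a})=0$) analyticity together with this quadratic bound is precisely what guarantees your claim that the $r$-derivative of the limiting equation at $(1,ga)$ equals the unperturbed value $2\pi M_{0}(\alpha+1)\neq0$.
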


\begin{proof}
The function $\Psi_{\varepsilon}(ra_{0},\xi^{\prime})$ reads
\[
\Psi_{\varepsilon}(ra_{0},\xi^{\prime})=\mathcal{A}_{0}(ra_{0},\xi^{\prime
})+\mathcal{N}(ra_{0},\xi^{\prime})
\]
where $(ra_{0},\xi^{\prime})\in\mathcal{V}\subset X_{0}^{\Gamma}$ and
\begin{equation}
\mathcal{A}_{0}(ra_{0},\xi^{\prime})=2\pi\left(  \varepsilon^{1-\alpha}%
M_{0}\left(  \frac{1}{2}r^{2}+\phi_{\alpha}(r)\right)  +V(\xi^{\prime
})\right)  .\nonumber
\end{equation}
We want to express $r$ as a function of $\xi^{\prime}$ from the equation
$\partial_{r}\Psi_{\varepsilon}(ra_{0},\xi^{\prime})=0$. Using the same
strategy as before, we consider the regularised $r$-gradient
\[
f_{\varepsilon}(r,\xi^{\prime})=\varepsilon^{\alpha-1}\partial_{r}%
\Psi_{\varepsilon}(ra_{0},\xi^{\prime}).
\]
Observe that
\[
f_{\varepsilon}(r,\xi^{\prime}):=2\pi M_{0}\left(  r-\frac{1}{r^{\alpha}%
}\right)  +\varepsilon^{\alpha-1}\partial_{r}\mathcal{N}(ra_{0},\xi^{\prime}).
\]
By Theorem \ref{N} the regularised $r$-gradient extends continuously at
$\varepsilon=0$ and $f_{0}(1,a)=0$. Thus, in order to apply the implicit
function theorem we only need to show that the derivative
\begin{equation}
\partial_{r}f_{0}\left(  1,a\right)  =2\pi M_{0}(\alpha+1)+\lim_{\varepsilon
\rightarrow0}\varepsilon^{\alpha-1}\partial_{r}^{2}\mathcal{N}\left(
x_{a}\right)  \label{N vanishes}%
\end{equation}
is non zero. Since $ra_{0}=(r,0,...,0)$, then $\varepsilon^{\alpha-1}%
\partial_{r}^{2}\mathcal{N}\left(  x_{a}\right)  $ is the first row of
$\mathcal{C}_{\varepsilon}\nabla^{2}\mathcal{N}\left(  x_{a}\right)  $ and
\[
\left\Vert \varepsilon^{\alpha-1}\partial_{r}^{2}\mathcal{N}\left(
x_{a}\right)  \right\Vert \leq\left\Vert \mathcal{C}_{\varepsilon}\nabla
^{2}\mathcal{N}\left(  x_{a}\right)  \right\Vert .
\]
The fact that $\mathcal{C}_{\varepsilon}\nabla\mathcal{N}(\xi)$ is analytic in
$\varepsilon$ and the uniform estimate in Theorem \ref{N} imply that
$\Vert\mathcal{C}_{\varepsilon}\nabla\mathcal{N}\left(  \xi\right)
|_{\varepsilon=0}\Vert\leq N\Vert\xi-gx_{a}\Vert^{2}$. This inequality implies
that $\nabla_{\xi}\mathcal{N}(\xi)$ has no linear term at $\xi=x_{a}$, i.e.
its linearisation is zero $\lim_{\varepsilon\rightarrow0}\mathcal{C}%
_{\varepsilon}\nabla^{2}\mathcal{N}\left(  x_{a}\right)  =0$. Thus
\eqref{N vanishes} is non-vanishing.

By the implicit function theorem we conclude that there is a smooth function
$r_{\varepsilon}$ defined on a neighbourhood $\mathcal{V}^{\prime}\subset
E^{\prime}$ of $a$ such that
\[
f_{\varepsilon}(r_{\varepsilon}(\xi^{\prime}),\xi^{\prime})=\varepsilon
^{\alpha-1}\partial_{r}\Psi_{\varepsilon}(r_{\varepsilon}(\xi^{\prime}%
),\xi^{\prime})=0
\]
on this neighbourhood. As before this argument can be repeated at any point of
the orbit $G_{2}(a)$ in $E^{\prime}$ and we can assume that $\mathcal{V}%
^{\prime}$ is a neighbourhood of $G_{2}(a)$ in $E^{\prime}$. Hence, when we
fix $\varepsilon\in(0,\varepsilon_{0})$ and take a smaller neighbourhood
$\mathcal{V}\subset X_{0}^{\Gamma}$, the critical points of $\Psi
_{\varepsilon}(ra_{0},\xi^{\prime})$ in $\mathcal{V}$ are in one to one
correspondence with the critical points of the function $\Psi_{\varepsilon
}^{\prime}:\mathcal{V}^{\prime}\subset E^{\prime}\rightarrow\mathbb{R}$ given
by
\[
\Psi_{\varepsilon}^{\prime}(\xi^{\prime})=\mathcal{A}_{0}(r_{\varepsilon
}\left(  \xi^{\prime}\right)  a_{0},\xi^{\prime})+\mathcal{N}(r_{\varepsilon
}(\xi^{\prime})a_{0},\xi^{\prime})=V(\xi^{\prime})+\mathcal{N}^{\prime}%
(\xi^{\prime}).
\]
By uniqueness of $r_{\varepsilon}$ and $H_{a_{0}}$-equivariance of
$\Psi_{\varepsilon}(ra_{0},\xi^{\prime})$, we have that $r_{\varepsilon}%
$ is $H_{a_{0}}$-invariant, i.e. $\Psi_{\varepsilon}^{\prime
}$ is $H_{a_{0}}$-invariant. By Theorem \ref{N}, we have for
$g\in G_{2}$ the uniform estimates
\begin{align*}
|\varepsilon^{\alpha-1}\partial_{r}\mathcal{N}(ra_{0},\xi^{\prime})|  &  \leq
N(\varepsilon+\Vert\xi^{\prime}-ga\Vert^{2}+|r-1|^{2}),\\
\Vert\nabla_{\xi^{\prime}}\mathcal{N}(ra_{0},\xi^{\prime})\Vert &  \leq
N(\varepsilon+\Vert\xi^{\prime}-ga\Vert^{2}+|r-1|^{2}).
\end{align*}
Using these estimates and an argument analogous to Lemma
\ref{lemma: estimate phi}, it is possible to obtain the uniform estimates for
$r_{\varepsilon}(\xi^{\prime})$ and $\mathcal{N}^{\prime}(\xi^{\prime})$.
\end{proof}

\subsection{Critical points of the regular functional}

In this section we find the critical points of the regular functional
$\Psi_{\varepsilon}^{\prime}(\xi^{\prime})=V(\xi^{\prime})+\mathcal{N}%
^{\prime}(\xi^{\prime})$, where $V(\xi^{\prime})$ is $G_{2}$-invariant and
$\mathcal{N}^{\prime}(\xi^{\prime})$ is $H_{a_{0}}$-invariant. The potential
$\Psi_{0}^{\prime}(\xi^{\prime})=V(\xi^{\prime})$ has the orbit of critical
points $G_{2}\left(  a\right)  $. Thus, we encounter a similar situation to
the case studied in \cite{Fo} where the term $\mathcal{N}^{\prime}$ breaks the
symmetry from $G_{2}$ to the subgroup $H_{a_{0}}$.

Next we use Palais slice coordinates for $\xi^{\prime}$. Let $K:=\left(
G_{2}\right)  _{a}$ be the stabiliser of $a$ and $G_{2}(a)\subset E^{\prime}$
be the group orbit of $a$. Let $W=E^{\prime}/T_{a}G_{2}(a)$ be a $K$-invariant
complement in $E^{\prime}$. By the Palais slice theorem, there is a
$K$-invariant neighbourhood of $0$ denoted $W_{0}\subset W$, and a $G_{2}%
$-invariant neighbourhood of $G_{2}(a)$ denoted $\mathcal{V}^{\prime}\subset
E^{\prime}$, such that $\mathcal{V}^{\prime}$ is isomorphic to the associated
bundle $G_{2}\times_{K}W_{0}$ \cite{Palais, OR}. We can then shrink $W_{0}$
such that $\mathcal{V}^{\prime}$ is contained in $\mathcal{V}$. This provides
slice coordinates $\xi^{\prime}=[(g,w)]\in G_{2}\times_{K}W_{0}$ near
$G_{2}(a)$ with respect to which $a$ corresponds to the class $[(e,0)]$. We
can thus write$\ $the $H_{a_{0}}\times K$-invariant lift $\Psi_{\varepsilon
}^{\prime}(g,w)$ of $\Psi_{\varepsilon}^{\prime}(\xi^{\prime})$ with respect
to the variables $(g,w)\in G_{2}\times W_{0}$, where the twisted action of
$H_{a_{0}}\times K$ on $G_{2}\times W_{0}$ is given by
\[
(h,k)\cdot(g,w)=(hgk^{-1},k\cdot w)\quad(h,k)\in H_{a_{0}}\times K.
\]

By $G_{2}$-equivariance of $\Psi_{0}^{\prime}(g,w)=V(\xi^{\prime})$, we have
\[
\nabla_{w}\Psi_{0}^{\prime}(g,0)=0\quad\mbox{for every}\quad g\in G_{2}%
\]
where $\nabla_{w}\Psi_{0}^{\prime}:G\times W_{0}\rightarrow W$ denotes the
projection of $\nabla\Psi_{0}^{\prime}$ to the slice $W$. In the previous
section we performed a finite-dimensional Lyapunov-Schmidt reduction and a
second Lyapunov-Schmidt to solve the singular part of $\nabla\Psi
_{\varepsilon}^{\prime}(ra_{0}, \xi^{\prime})$. Now we perform a third
Lyapunov-Schmidt reduction to express the (normal) variables $w\in W_{0}$ in
terms of the variables along the group orbit $g\in G_{2}$. For this purpose we
also need the following non-degeneracy condition on the central configuration:

\begin{definition}
\label{non degenerate cc} We say that $a$ is \defn{non-degenerate} if the only
zero eigenvalues of the Hessian $\nabla^{2}V(a)$ correspond to the
eigenvectors belonging to the tangent space $T_{a}U(d)(a)$.
\end{definition}

\begin{remark}\label{remark: non-deg}
In the case \textbf{(C1)-(C2)} we have $G_{2}=U(1)$. In the case $\mathbf{%
(C3)}$ the group $G_{2}\subset U(d)$ is lower-dimensional than the group $%
U(d)$.The central configuration $a\ $in the fixed point space of $\Gamma $
consists of points lying in the orthogonal complement $\Pi ^{\perp }$, i.e. $%
a\in E^{\prime }=\left( \Pi ^{\perp }\right) ^{n}$. The orbit $\ U(d)(a)\subset E^{n}$ intersects $\left( \Pi ^{\perp }\right) ^{n}$ in
the $G_{2}$-orbit $G_{2}(a)\subset \left( \Pi ^{\perp }\right) ^{n}$. Since $%
a\in E^{n}$ is non-degenerate, the hessian $\nabla ^{2}V(a):E^{\prime
}\rightarrow E^{\prime }$ is non-singular when restricted to a complement of
the tangent space of the group orbit $G_{2}(a)\subset E^{\prime }$. Note that we could
have considered a degenerate central configuration $a$ such that the hessian
restricted to the fixed point space of $\Gamma $, $\nabla ^{2}V(a):E^{\prime
}\rightarrow E^{\prime }$ , has only zero eigenvalues with eigenvectors
belonging to the tangent space of the orbit $T_{a}G_{2}(a)\subset E^{\prime }
$. Although, we ignore if any degenerate central configuration $a$ 
satisfies this weaker condition.
\end{remark}

Before concluding the proof of the existence of critical points for the
regular functional $\Psi_{\varepsilon}^{\prime}(g,w)$, we briefly recall some
tools of Lyusternik-Schnirelmann theory \cite{LS}. Given a compact Lie group
$G$ acting on a compact manifold $M$ and a smooth $G$-invariant function
$f:M\rightarrow\R$, the equivariant version of the Lyusternik-Schnirelmann
theorem states that the number of $G$-orbits of critical points of $f$ is
bounded below by $\mbox{Cat}_{G}(M)$ \cite{Fadell}. The latter is defined as
being the least number of \emph{$G$-categorical} open subsets required to
cover $M$. Those are the $G$-invariant open subsets which are contractible
onto a $G$-orbit by mean of a $G$-equivariant homotopy.

\begin{theorem}
\label{main category} Assume conditions $\mathbf{(A)-(B)}$ and
$\mathbf{(C1)-(C3)}$ and suppose that the central configuration $a\in E^{n}$
is non-degenerate. For each $\varepsilon\in(0,\varepsilon_{0})$ there is a
neighbourhood $\mathcal{V}^{\prime}\subset E^{\prime}$ of the orbit $G_{2}(a)$
so that the number of $H_{a_{0}}$-orbits of critical points of the reduced
potential $\Psi_{\varepsilon}^{\prime}$ defined on $\mathcal{V}^{\prime}$ is
bounded below by
\[
\mbox{Cat}_{H_{a_{0}}}(G_{2}/K).
\]
Furthermore, we have that the $H_{a_{0}}$-orbits of solutions have an element
of the form $\xi^{\prime}=g\cdot a+\mathcal{O}_{E^{\prime}}(\varepsilon)$ for
some $g\in G_{2}$.
\end{theorem}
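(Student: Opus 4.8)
The plan is to carry out a third and final Lyapunov-Schmidt reduction in the Palais slice coordinates $\xi^{\prime}=[(g,w)]\in G_{2}\times_{K}W_{0}$ introduced above, now solving the normal (slice) components $w$ in terms of the group variable $g\in G_{2}$, and then to count critical orbits by the equivariant Lyusternik-Schnirelmann theorem on the compact homogeneous space $G_{2}/K$. Working with the $H_{a_{0}}\times K$-invariant lift $\Psi_{\varepsilon}^{\prime}(g,w)$, I consider the slice equation $\nabla_{w}\Psi_{\varepsilon}^{\prime}(g,w)=0$, where $\nabla_{w}$ is the component of $\nabla\Psi_{\varepsilon}^{\prime}$ along $W$. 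At $\varepsilon=0$ one has $\Psi_{0}^{\prime}=V$ and $\nabla_{w}\Psi_{0}^{\prime}(g,0)=0$ for all $g$, so $w=0$ is a solution along the whole orbit. The non-degeneracy of $a$ (Definition \ref{non degenerate cc} and Remark \ref{remark: non-deg}) means precisely that $\nabla^{2}V(a)$ is invertible on the complement $W=E^{\prime}/T_{a}G_{2}(a)$; hence the slice Hessian $\nabla_{w}^{2}\Psi_{0}^{\prime}(e,0)=\nabla^{2}V(a)|_{W}$ is an isomorphism of $W$.

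First I would apply the implicit function theorem to $\nabla_{w}\Psi_{\varepsilon}^{\prime}(g,w)=0$ at $(g,w,\varepsilon)=(e,0,0)$, obtaining a smooth $w_{\varepsilon}(g)$ with $w_{0}(g)=0$. Because $\nabla V$ is $G_{2}$-equivariant, the slice Hessian at any point $ga$ is conjugate by the $G_{2}$-action to the one at $a$, and is therefore invertible with a uniform bound; by compactness of $G_{2}(a)$ the construction is carried out on a finite cover of the orbit and patched, exactly as in Theorem \ref{thm:Lyapunov-Schmidt reduction}, producing $w_{\varepsilon}(g)$ for all $g\in G_{2}$ and $\varepsilon\in(0,\varepsilon_{0})$. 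Next I would derive the uniform estimate $\Vert w_{\varepsilon}(g)\Vert\leq C\varepsilon$: from $\nabla_{w}V(\xi^{\prime})=-\nabla_{w}\mathcal{N}^{\prime}(\xi^{\prime})$, a Taylor expansion of $\nabla_{w}V$ at $w=0$ together with the invertibility of the slice Hessian gives
\[
\Vert w_{\varepsilon}(g)\Vert\leq C\left(  \Vert\nabla_{w}\mathcal{N}^{\prime}(\xi^{\prime})\Vert+\Vert w_{\varepsilon}(g)\Vert^{2}\right)  ,
\]
and the bound $\Vert\nabla_{\xi^{\prime}}\mathcal{N}^{\prime}\Vert\leq N^{\prime}(\varepsilon+\Vert\xi^{\prime}-ga\Vert^{2})$ of Theorem \ref{main regular}, combined with $\Vert\xi^{\prime}-ga\Vert\sim\Vert w_{\varepsilon}(g)\Vert$ in slice coordinates, lets me absorb the quadratic terms on a small enough $W_{0}$ to conclude $\Vert w_{\varepsilon}(g)\Vert\leq C\varepsilon$. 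This is the same scheme as Lemma \ref{lemma: estimate phi}.

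By uniqueness of $w_{\varepsilon}(g)$ and the twisted invariance $\Psi_{\varepsilon}^{\prime}(hgk^{-1},k\cdot w)=\Psi_{\varepsilon}^{\prime}(g,w)$, the implicit solution satisfies $w_{\varepsilon}(hgk^{-1})=k\cdot w_{\varepsilon}(g)$, so the reduced function
\[
\psi_{\varepsilon}(g):=\Psi_{\varepsilon}^{\prime}(g,w_{\varepsilon}(g))
\]
is invariant under the left $H_{a_{0}}$-action and the right $K$-action, hence descends to an $H_{a_{0}}$-invariant smooth function on the compact manifold $G_{2}/K$. As in Theorem \ref{thm:Lyapunov-Schmidt reduction}, the chain rule gives $\nabla\psi_{\varepsilon}(g)=\nabla_{g}\Psi_{\varepsilon}^{\prime}(g,w_{\varepsilon}(g))$ since the $\nabla_{w}$-term vanishes, so critical points of $\psi_{\varepsilon}$ on $G_{2}/K$ are in one-to-one correspondence with critical points of $\Psi_{\varepsilon}^{\prime}$ near $G_{2}(a)$, and each critical $\xi^{\prime}$ has the form $g\cdot a+\mathcal{O}_{E^{\prime}}(\varepsilon)$ because $\Vert w_{\varepsilon}(g)\Vert=\mathcal{O}(\varepsilon)$. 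Finally I would invoke the equivariant Lyusternik-Schnirelmann theorem for the compact Lie group $H_{a_{0}}$ acting on the compact manifold $G_{2}/K$: the number of $H_{a_{0}}$-orbits of critical points of $\psi_{\varepsilon}$ is at least $\mbox{Cat}_{H_{a_{0}}}(G_{2}/K)$, which is the asserted lower bound, and the location of the orbits yields the second claim.

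The main obstacle is the uniform control of the reduction over the entire group orbit together with the correct equivariance bookkeeping. Although $\mathcal{N}^{\prime}$ is only $H_{a_{0}}$-invariant and not $G_{2}$-invariant, the slice reduction must be performed along the full $G_{2}$-orbit $G_{2}(a)$ with constants independent of $g$ and $\varepsilon$; this is exactly what the uniform estimate of Theorem \ref{main regular} supplies, while the non-degeneracy hypothesis on $a$ furnishes the uniform lower bound for the slice Hessian. The delicate point is to arrange the implicit solution $w_{\varepsilon}$ to respect the twisted $H_{a_{0}}\times K$-action, so that the reduced function is genuinely $H_{a_{0}}$-invariant on $G_{2}/K$ and the equivariant category of the smaller group $H_{a_{0}}$ (rather than $G_{2}$) governs the count, in the spirit of \cite{Fo}.
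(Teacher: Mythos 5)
Your proposal is correct and follows essentially the same route as the paper's own proof: a third Lyapunov--Schmidt reduction in the Palais slice coordinates, with the non-degeneracy of $a$ (via Remark \ref{remark: non-deg}) giving uniform invertibility of the slice Hessian along $G_{2}(a)$, uniqueness of the implicit map yielding the twisted $H_{a_{0}}\times K$-equivariance, the estimate $\Vert w_{\varepsilon}(g)\Vert\leq C\varepsilon$ obtained as in Lemma \ref{lemma: estimate phi}, and the equivariant Lyusternik--Schnirelmann theorem applied to the descended $H_{a_{0}}$-invariant function on $G_{2}/K$. The only cosmetic difference is that the paper first verifies $\nabla_{\xi^{\prime}}^{2}\mathcal{N}^{\prime}(a)|_{\varepsilon=0}=0$ from analyticity and the uniform estimate before identifying the slice Hessian with $\nabla^{2}V(a)$, a point you absorb into the identification $\Psi_{0}^{\prime}=V$, which the paper itself also asserts.
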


\begin{proof}
The fact that $\nabla_{\xi^{\prime}}\mathcal{N}^{\prime}(\xi^{\prime})$ is
analytic in $\varepsilon$ and $\xi^{\prime}$ and the uniform estimate
$\Vert\nabla_{\xi^{\prime}}\mathcal{N}^{\prime}(\xi^{\prime})|_{\varepsilon
=0}\Vert\leq N^{\prime}\Vert\xi^{\prime}-a\Vert^{2}$ imply that $\nabla
_{\xi^{\prime}}\mathcal{N}^{\prime}(\xi^{\prime})$ has no linear terms at
$\xi^{\prime}=a$, i.e. $\nabla_{\xi^{\prime}}^{2}\mathcal{N}^{\prime}\left(
a\right)  |_{\varepsilon=0}=0$. Thus
\[
\nabla_{\xi^{\prime}}^{2}\Psi_{0}^{\prime}\left(  a\right)  =\nabla^{2}V(a).
\]
Under the non-degeneracy assumption, the hessian $\nabla^{2}V(a)$ is
non-singular when restricted to a complement of the tangent space of the group
orbit $G_{2}(a)$ at $a$ (see remark \ref{remark: non-deg}). Since $W$ is the orthogonal complement to the
tangent space $T_{a}G_{2}(a)$ in $E^{\prime}$ and $\nabla_{w}^{2}\Psi
_{0}^{\prime}(g,0)$ and $\nabla_{w}^{2}\Psi_{0}^{\prime}(e,0)$ are conjugated
matrices, then the inverse of $\nabla_{w}^{2}\Psi_{0}^{\prime}(g,0)$ is
bounded by $C$ for all $g\in G_{2}$ and $\varepsilon\in(0,\varepsilon_{0})$.
That is,%
\[
\left\Vert \lbrack\nabla_{w}^{2}\Psi_{0}^{\prime}(g,0)]^{-1}\right\Vert \leq
C,\qquad\forall g\in G_{2}.
\]

Fix $\varepsilon\in(0,\varepsilon_{0})$. The compactness of $G_{2}$ and an
argument based on the implicit function theorem similar to the first
Lyapunov-Schmidt reduction imply the existence of a unique map $\phi
_{\varepsilon}:G_{2}\rightarrow W_{0}$, defined for every $\varepsilon
\in(0,\varepsilon_{0})$, that solves the equation
\[
\nabla_{w}\Psi_{\varepsilon}^{\prime}(g,\phi_{\varepsilon}(g))=0,\quad g\in
G_{2}.
\]
Since $\Psi_{\varepsilon}^{\prime}(g,w)$ is $H_{a_{0}}\times K$-invariant, the
uniqueness of the map $\phi_{\varepsilon}(g)$ implies that $\phi_{\varepsilon
}(hgk^{-1})=k\cdot\phi_{\varepsilon}(g)$, i.e. $\phi_{\varepsilon}(g)$ is
$H_{a_{0}}$-invariant and $K$-equivariant. In particular, finding the
solutions of $\nabla\Psi_{\varepsilon}^{\prime}(\xi^{\prime})=0$ amounts to
find the critical points of $\Psi_{\varepsilon}^{\prime}(g,\phi_{\varepsilon
}(g)):G_{2}\rightarrow\mathbb{R}$ which descends to an $H_{a_{0}}$-invariant
function on $G_{2}/K$ - which is compact. By the equivariant version of the
Lyusternik-Schnirelmann theorem, the number of $H_{a_{0}}$-orbits of critical
points of $\Psi_{\varepsilon}^{\prime}:G_{2}/K\rightarrow\mathbb{R}$ is
bounded below by $\mbox{Cat}_{H_{a_{0}}}(G_{2}/K)$. Finally, using a similar
argument to Lemma \ref{lemma: estimate phi} it is possible to show that
$\Vert\phi_{\varepsilon}(g)\Vert\leq N_{3}\varepsilon$ for every $g\in G_{2}$.
Thus these $H_{a_{0}}$-orbits have an element of the form $\xi^{\prime}=g\cdot
a+\mathcal{O}_{E^{\prime}}(\varepsilon)$ for some $g\in G_{2}$.
\end{proof}

\section{Solutions of the $N$-body problem}

We now work out the solutions that we obtain for the $N=(n+1)$-body problem%

\begin{equation}
m_{\ell}\ddot{q}_{\ell}=-\sum_{k\neq\ell}m_{\ell}m_{k}\frac{q_{\ell}-q_{k}%
}{\Vert q_{\ell}-q_{k}\Vert^{\alpha+1}},\qquad\ell=0,\dots,n \label{NBP}%
\end{equation}
according to the three cases $\mathbf{(C1)}$-$\mathbf{(C2)}$-$\mathbf{(C3)}$
that we discussed earlier. The solutions are now written in components
\[
q(t)=(q_{0}(t),q_{1}(t),\dots,q_{n}(t))\in E^{N}.
\]

\subsection{Solutions in the plane (C1)-(C2)}

If $E$ is two dimensional, we set $\mathcal{J}=J.$ In this case we obtain
solutions that in some particular cases correspond to braids. In this case,
$G=U(1)\times U(1)$ and $H=\widetilde{U(1)}$ is diagonal in $G$. The orbit of
$x_{a}$ is $G(x_{a})=G_{1}(a_{0})\times G_{2}(a)$ whose two factors are
isomorphic to a circle $U(1)$. Furthermore, the groups $H_{a_{0}}$ and $K$ are
trivial, the orbit is $G_{2}(a)=\mathbb{S}^{1}$ and
\[
\mbox{Cat}(G_{2}/K)=2.
\]
By Theorem \ref{main category}, the regular functional $\Psi_{\varepsilon
}^{\prime}(\xi^{\prime})$ has at least two critical points near $G_{2}(a)$. We
can identify the critical points of $\Psi_{\varepsilon}^{\prime}(g,\phi
_{\varepsilon}(g))$ by an element of the form $g=e^{\vartheta J}\in G_{2}=U(1)$ for some
$\vartheta\in\lbrack0,2\pi]$. Then for the critical points of $\Psi(\xi)$ we
have $\xi=\left(  a_{0},e^{i\vartheta J}a\right)  +\mathcal{O}_{X_{0}^{\Gamma
}}(\varepsilon)$. Therefore, the critical points of $\mathcal{A}(\xi+\eta)$
are given by
\[
u=\xi+\eta=(a_{0},e^{i\vartheta J}a)+\mathcal{O}_{X^{\Gamma}}(\varepsilon
)\text{,}%
\]
where $\mathcal{O}_{X^{\Gamma}}(\varepsilon)$ is a function in $X^{\Gamma}$
such that $\left\Vert \mathcal{O}_{X^{\Gamma}}(\varepsilon)\right\Vert
_{X}\leq c\varepsilon$ for some constant $c$. Then we have,

\begin{theorem}
\label{main result} Suppose that $\dim(E)=2$ and the conditions
$\mathbf{(A)-(B)}$ are satisfied. Let $a\in E^{n}$ be a central configuration,
satisfying the conditions $\mathbf{(C1)}$ or $\mathbf{(C2)}$, and such that
$\nabla^{2}V(a)$ has kernel of real dimension $1$. Then the following occurs:

\begin{itemize}
\item[$\mathbf{(C1)}$] If $\alpha\neq2$, then for every $\varepsilon
\in(0,\varepsilon_{0})$, there are at least two solutions $q(t)$ of
\eqref{NBP} with components of the form
\begin{align}
q_{0}(t)  &  =\exp\left(  tJ\right)  u_{1}(\nu t)-m_{1}\varepsilon\exp(t\omega
J)u_{0}(\nu t)\\
q_{1}(t)  &  =\exp\left(  tJ\right)  u_{1}(\nu t)+m_{0}\varepsilon\exp(t\omega
J)u_{0}(\nu t)\nonumber\\
q_{\ell}(t)  &  =\exp\left(  tJ\right)  u_{\ell}(\nu t),\qquad\ell
=2,...,n~,\nonumber
\end{align}
where $\omega=\pm\varepsilon^{-(\alpha+1)/2}$, $\nu=\omega-1$, $u_{0}%
(s)=a_{0}+\mathcal{O}_{X^{\Gamma}}(\varepsilon)$ and $u_{\ell}(s)=e^{\vartheta
J}a_{\ell}+\mathcal{O}_{X}(\varepsilon)$ for some phase $\vartheta\in
\lbrack0,2\pi]$. The case $\omega>0$ corresponds to a prograde rotation of the
pair and $\omega<0$ to a retrograde rotation.

\item[$\mathbf{(C2)}$] If $\alpha=2$, the same result holds with the addition
that $u_{0}(s)$ is $2\pi/n$-periodic and%
\begin{equation}
u_{\ell}(s)=\exp(-\theta J)u_{\sigma(\ell)}(s+\theta),\qquad\ell=1,\dots,n,
\label{sym}%
\end{equation}
where $(\theta,\sigma)$ is the generator of the discrete symmetry group
$\Gamma$ defined in Section \ref{section: symmetries}.
\end{itemize}
\end{theorem}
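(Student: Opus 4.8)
The plan is to obtain Theorem \ref{main result} as the planar specialisation ($d=1$) of the reduction carried out in Sections 3 and 4, so that the only new work is a categorical count and the unwinding of the coordinate changes of Section 2. First I would check that the stated hypothesis matches the non-degeneracy condition of Definition \ref{non degenerate cc}. Since $\dim(E)=2$ forces $d=1$, we have $G_{2}=U(1)$ and $U(d)(a)=G_{2}(a)$, so the tangent space $T_{a}U(d)(a)$ coincides with the one-dimensional space $T_{a}G_{2}(a)$. Hence the assumption that $\nabla^{2}V(a)$ has kernel of real dimension $1$ says precisely that the only null directions of the Hessian are tangent to the group orbit, which is Definition \ref{non degenerate cc}. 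In particular the hypotheses of Theorem \ref{main category} are satisfied under both $\mathbf{(C1)}$ and $\mathbf{(C2)}$.

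I would then apply Theorem \ref{main category} directly. In the plane the stabiliser $H_{a_{0}}$ and the isotropy $K=(G_{2})_{a}$ are trivial, so $G_{2}/K=G_{2}\cong\mathbb{S}^{1}$ and the equivariant category reduces to the ordinary one, $\mbox{Cat}_{H_{a_{0}}}(G_{2}/K)=\mbox{Cat}(\mathbb{S}^{1})=2$. Thus $\Psi_{\varepsilon}^{\prime}$ has at least two critical points $\xi^{\prime}\in\mathcal{V}^{\prime}$, each of the form $\xi^{\prime}=e^{\vartheta J}a+\mathcal{O}_{E^{\prime}}(\varepsilon)$ for some phase $\vartheta\in[0,2\pi]$. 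Since $H_{a_{0}}$ is trivial these are two distinct points, and pulling them back through the one-to-one correspondences of Theorem \ref{main regular} and Theorem \ref{thm:Lyapunov-Schmidt reduction} yields two distinct critical points of $\mathcal{A}$ in $X^{\Gamma}$. Combining the $\mathcal{O}(\varepsilon)$ control of $\xi^{\prime}$ with the bound on $r_{\varepsilon}$ from Theorem \ref{main regular} and the estimate $\Vert\varphi_{\varepsilon}(\xi)\Vert\leq N_{1}(\varepsilon+\Vert\xi-gx_{a}\Vert^{2})$ of Lemma \ref{lemma: estimate phi}, each such critical point is of the form $u=\xi+\eta=(a_{0},e^{\vartheta J}a)+\mathcal{O}_{X^{\Gamma}}(\varepsilon)$, with $u_{0}(s)=a_{0}+\mathcal{O}_{X^{\Gamma}}(\varepsilon)$ and $u_{\ell}(s)=e^{\vartheta J}a_{\ell}+\mathcal{O}_{X}(\varepsilon)$. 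By the identification of the normalised and the original functionals in Section 2, $u$ is a solution of the Euler--Lagrange equations \eqref{gradient A_0 u}.

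It remains to translate $u$ into the positions $q_{\ell}(t)$ of the original bodies by reversing, in order, the coordinate changes of Section 2: the scaling $s=\nu t$ together with $v_{0}(t)=\varepsilon\exp((\omega-1)\mathcal{J}t)u_{0}(\nu t)$ and $v_{\ell}(t)=u_{\ell}(\nu t)$; the rotating coordinates $Q_{j}(t)=\exp(t\mathcal{J})v_{j}(t)$; and the Jacobi-like relations $q_{j}=Q_{1}-\mu_{j}Q_{0}$ with $\mu_{0}=m_{1}$, $\mu_{1}=-m_{0}$. Setting $\mathcal{J}=J$ and using that $\exp(tJ)\exp((\omega-1)tJ)=\exp(\omega tJ)$, these compose to $Q_{0}(t)=\varepsilon\exp(t\omega J)u_{0}(\nu t)$ and $Q_{1}(t)=\exp(tJ)u_{1}(\nu t)$, whence $q_{0}=Q_{1}-m_{1}Q_{0}$, $q_{1}=Q_{1}+m_{0}Q_{0}$ and $q_{\ell}=\exp(tJ)u_{\ell}(\nu t)$ give exactly the three displayed formulas. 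This step is routine bookkeeping; the sign $\omega=\pm\varepsilon^{-(\alpha+1)/2}$ from condition $\mathbf{(A)}$ distinguishes the prograde and retrograde cases.

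For case $\mathbf{(C2)}$ the only addition is that the solutions lie in $X^{\Gamma}$, hence satisfy $x=(\theta,\sigma)x$ for the generator $(\theta,\sigma)$ of $\Gamma$. Reading off components of this identity gives $u_{0}(s)=u_{0}(s+\theta)$, so $u_{0}$ is $2\pi/m$-periodic (equal to $2\pi/n$ for the symmetric configurations at hand), together with $u_{\ell}(s)=\exp(-\theta\mathcal{J})u_{\sigma(\ell)}(s+\theta)$, which is precisely \eqref{sym}. I expect the only delicate points to be the two checks bracketing this chain: matching the stated kernel hypothesis to Definition \ref{non degenerate cc} so that Theorem \ref{main category} applies, and confirming that the category bound produces two genuinely distinct solutions rather than one counted twice --- which holds here because $H_{a_{0}}$ is trivial and the overall $H$-phase was already fixed in the reduction of Theorem \ref{main regular}.
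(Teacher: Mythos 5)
Your proposal is correct and follows essentially the same route as the paper: specialise to $d=1$, where $H_{a_{0}}$ and $K$ are trivial, apply Theorem \ref{main category} with $\mbox{Cat}_{H_{a_{0}}}(G_{2}/K)=\mbox{Cat}(\mathbb{S}^{1})=2$, pull the two critical points back through Theorems \ref{main regular} and \ref{thm:Lyapunov-Schmidt reduction} to get $u=(a_{0},e^{\vartheta J}a)+\mathcal{O}_{X^{\Gamma}}(\varepsilon)$, and unwind the coordinate changes of Section 2. The details you add --- matching the kernel hypothesis to Definition \ref{non degenerate cc}, the explicit reconstruction of the $q_{\ell}(t)$, and reading off the $\Gamma$-fixed-point symmetry in case $\mathbf{(C2)}$ --- are precisely the steps the paper leaves implicit.
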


For such solutions, the bodies $\ell=0,1$ rotate in a circular Kepler orbit
whose center of mass follows the position determined by a body in a rigid
motion of $n$ bodies. If $\varepsilon\in(0,\varepsilon_{0})$ is such that
$\omega\in\mathbb{Q}$, then $\nu=1-\omega\in\mathbb{Q}$ and the solution is
periodic. Otherwise the solution $q(t)$ is quasi-periodic. Furthermore, if the
frequency $\omega=\pm p/q$ is rational, then $\nu=(\pm p-q)/q$ is rational and
the functions $u_{j}(\nu t)$ and $e^{\omega tJ}$ are $2\pi q$-periodic.
Therefore, the solutions $q(t)$ are $2\pi q$-periodic.

\begin{corollary}
[Braid solutions]\label{main braids} Suppose that $\dim(E)=2$ and the
conditions $\mathbf{(A)-(B)}$ are satisfied. Let $a\in E^{n}$ be a central
configuration, satisfying the conditions $\mathbf{(C1)}$ or $\mathbf{(C2)}$,
and such that $\nabla^{2}V(a)$ has kernel of real dimension $1$. Fix an
integer $q\in\Z\setminus\{0\}$. Set $\varepsilon=\left(  p/q\right)
^{-2/(\alpha+1)}$, where $p$ is relatively prime to $q$. Then there is $p_{0}$
such that, for each $p>p_{0}$, there are at least two solutions $q(t)$ of
\eqref{NBP} with components of the form
\begin{align*}
q_{0}(t)  &  =\exp(\left(  t+\vartheta\right)  J)a_{1}-m_{1}\varepsilon
\exp(\pm(pt/q)J)a_{0}+\mathcal{O}(\varepsilon),\\
q_{1}(t)  &  =\exp(\left(  t+\vartheta\right)  J)a_{1}+m_{0}\varepsilon
\exp(\pm(pt/q)J)a_{0}+\mathcal{O}(\varepsilon),\\
q_{\ell}(t)  &  =\exp(\left(  t+\vartheta\right)  J)a_{\ell}+\mathcal{O}%
(\varepsilon),\qquad\ell=2,...,n~.
\end{align*}
where $\vartheta$ represents a phase, and $\mathcal{O}(\varepsilon)$ is a
$2\pi q$-periodic function of order $\varepsilon$.
\end{corollary}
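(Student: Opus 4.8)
The plan is to deduce the corollary directly from Theorem \ref{main result} by specialising the free parameter $\varepsilon$ to the discrete values that render the frequency $\omega$ rational, and then checking that the resulting solutions close up after time $2\pi q$. First I would set $\varepsilon = (p/q)^{-2/(\alpha+1)}$ and observe that condition $\mathbf{(A)}$ then forces $\omega = \pm p/q$: indeed $\varepsilon^{-(\alpha+1)/2} = p/q$, so $\omega = \pm p/q$, and condition $\mathbf{(B)}$ gives $\nu = \omega - 1 = (\pm p - q)/q$. Since $q$ is fixed and $\gcd(p,q)=1$, letting $p\to\infty$ sends $p/q\to\infty$ and hence $\varepsilon\to 0$; I would therefore take $p_0 := q\,\varepsilon_0^{-(\alpha+1)/2}$, so that $p>p_0$ guarantees $\varepsilon\in(0,\varepsilon_0)$, the range in which Theorem \ref{main result} applies. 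That theorem then yields at least two solutions of \eqref{NBP} of the stated asymptotic form, with $u_0(s)=a_0+\mathcal{O}_X(\varepsilon)$ and $u_\ell(s)=e^{\vartheta J}a_\ell+\mathcal{O}_X(\varepsilon)$ for some phase $\vartheta$.

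The second step is to establish $2\pi q$-periodicity of every factor appearing in the three components. The factor $\exp(tJ)$ is already $2\pi$-periodic, hence $2\pi q$-periodic. For the Kepler factor $\exp(t\omega J)=\exp(\pm(pt/q)J)$, the shift $t\mapsto t+2\pi q$ produces the extra rotation $\exp(\pm 2\pi p J)=I$, so it too is $2\pi q$-periodic. For the profiles $u_j(\nu t)$, the same shift increases the argument by $\nu\cdot 2\pi q = 2\pi(\pm p - q)$, an integer multiple of $2\pi$, whence $2\pi$-periodicity of $u_j$ yields $2\pi q$-periodicity in $t$. Consequently each component $q_\ell(t)$ is $2\pi q$-periodic, as claimed.

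Finally I would rewrite the solutions in the form of the statement. Substituting the expansions for $u_0,u_1,u_\ell$ and using that planar rotations commute, I get $\exp(tJ)e^{\vartheta J}a_\ell = \exp((t+\vartheta)J)a_\ell$ and $\exp(t\omega J)a_0 = \exp(\pm(pt/q)J)a_0$, so the leading terms are exactly those written in the corollary. The remainders have the form $\exp(tJ)\mathcal{O}_X(\varepsilon)$ and $\varepsilon\exp(t\omega J)\mathcal{O}_X(\varepsilon)$; since the prefactors are isometries and $2\pi q$-periodic, and since $\varepsilon\cdot\mathcal{O}(\varepsilon)=\mathcal{O}(\varepsilon)$, these collapse into a single $2\pi q$-periodic function of order $\varepsilon$. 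I expect no genuine obstacle here beyond Theorem \ref{main result}: the argument is essentially bookkeeping, and the only points requiring care are the simultaneous periodicity of the three distinct factors $\exp(tJ)$, $\exp(t\omega J)$, and $u_j(\nu t)$, and the verification that the absorbed remainder inherits the $2\pi q$-periodicity.
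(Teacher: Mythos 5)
Your proposal is correct and takes essentially the same route as the paper: the paper obtains Corollary \ref{main braids} from Theorem \ref{main result} together with the remark preceding the corollary, namely that for $\omega=\pm p/q$ one has $\nu=(\pm p-q)/q$, so that $u_{j}(\nu t)$ and $e^{\omega tJ}$ are $2\pi q$-periodic and hence so are the solutions $q(t)$. Your explicit threshold $p_{0}=q\,\varepsilon_{0}^{-(\alpha+1)/2}$ and the bookkeeping that absorbs $\exp(tJ)\mathcal{O}_{X}(\varepsilon)$ and $\varepsilon\exp(t\omega J)\mathcal{O}_{X}(\varepsilon)$ into a single $2\pi q$-periodic remainder of order $\varepsilon$ merely spell out what the paper leaves implicit.
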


In these solutions the bodies $\ell=0,1$ wind around their center of mass $p$
times in the period $2\pi q$, while the center of mass of the bodies
$\ell=0,1$ and the bodies $\ell=2,...,n$ wind around the origin $q$ times. The
case $\omega=p/q$ corresponds to a prograde rotation of the pair and
$\omega=-p/q$ to a retrograde rotation.

\subsection{Examples of solutions satisfying conditions (C2)}

\label{examples} Given that we need the symmetric conditions (C2a)-(C2b) in
the gravitational case, we now present examples of configurations that we can
braid: the Maxwell configuration and configurations symmetric through the
origin. For each case, we find a symmetry $\sigma\in S_{n}$ that allows to
deal with the resonances.

\begin{itemize}
\item \textbf{Maxwell configuration. }The Maxwell configuration is proposed by
Maxwell as a model of Saturn and its ring. This central configuration consists
of a polygonal configuration of unitary masses with a central body of
different mass $\mu$. The central body is at the origin $a_{1}=0$ with mass
$M_{1}=\mu$. The other bodies have masses $M_{\ell}=1$ and coordinates
\[
a_{\ell}=\left(  \mu+S_{n-1}\right)  ^{3/2}e^{J\ell\theta}%
\begin{bmatrix}
1\\
0
\end{bmatrix}
,\qquad\theta=\frac{2\pi}{n-1}%
\]
for $\ell=1,\dots,n-1$, where
\[
S_{n-1}=\frac{1}{4}\sum_{\ell=1}^{n-1}\frac{1}{\sin(\frac{\pi\ell}{n-1}%
)}\text{.}%
\]
(see \cite{GaIz13} for details). We consider the discrete symmetry generated
by $(\theta,\sigma)$, where $\sigma=(2\dots n)\in S_{n}$ is such that
$\sigma^{n-1}=(1)$. We only need to verify conditions (C2a)-(C2b). The masses
satisfy condition (C2a) because $\sigma(1)=1$ and $M_{\ell}=1$ for
$\ell=2,\dots,n$. The positions satisfy condition (C2b) because $\sigma(1)=1$
with $a_{n}=0\ $and $a_{\sigma(\ell)}=\exp(\theta J)a_{\ell}$ for
$\ell=2,\dots,n$.
\end{itemize}

\begin{center}
\begin{figure}[h]
\begin{center}
\begin{tikzpicture}
\tkzDefPoint(0,0){O}\tkzDefPoint(2,0){a_2}
\tkzDefPoint(-1,0){P}\tkzDefPoint(0,0){a_7}
\tkzDefPointsBy[rotation=center O angle 360/6](a_2,a_3,a_4,a_5,a_6){a_3,a_4,a_5,a_6,a_1}
\tkzLabelPoints[right](a_2)
\tkzLabelPoints[above right](a_3)
\tkzLabelPoints[below, yshift=-0.2cm](a_7)
\tkzLabelPoints[above left](a_4)
\tkzLabelPoints[left](a_5)
\tkzLabelPoints[below left](a_6)
\tkzLabelPoints[below right](a_1)
\tkzDrawPolygon[color=gray, dashed](a_2,a_3,a_4,a_5,a_6,a_1)
\tkzDrawPoints[fill =black,size=10,color=black](a_2,a_3,a_4,a_5,a_6,a_1)
\tkzDrawPoints[fill =iris,size=20,color=iris](a_7)
\end{tikzpicture}
\end{center}
\caption{Maxwell configuration for seven bodies.}%
\end{figure}
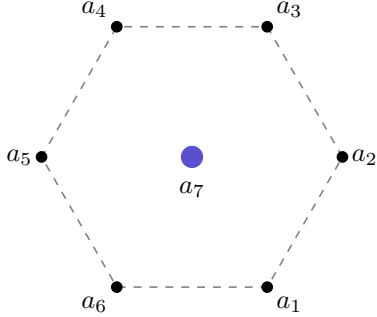
\end{center}

\begin{itemize}
\item \textbf{Symmetric configuration with respect to the origin.} In this
case we assume that $\theta=\pi$ and that there is an involution $\sigma\in
S_{n}$ such that $\sigma^{2}=(1)$ and $\sigma(1)=1$. That is, the central
configuration $a$ and its associated masses $M_{\ell}$ need to be invariant
under the involution $\sigma$. Explicitly we require
\[
M_{\ell}=M_{\sigma(\ell)},\qquad a_{\sigma(\ell)}=-a_{\ell},
\]
for $\ell=1,\dots,n$, i.e. $a_{1}=0$. This class of central configuration are
symmetric with respect to the origin.

\begin{figure}[h]
\begin{center}
\begin{tikzpicture}[fill opacity=1, scale=0.8]
\tkzDefPoint(0,0){O}\tkzDefPoint(1.5,-1.5){a_1}
\tkzDefPointsBy[rotation=center O angle 360/4](a_1,a_2,a_3,a_4){a_2,a_3,a_4}
\tkzDefPoint(-1,0){P}\tkzDefPoint(0,0){a_5}
\tkzDrawPolygon[color=gray, dashed](a_1,a_2,a_3,a_4)
\tkzDefPoint(2,0){b_1}
\tkzDefPointsBy[rotation=center O angle 360/4](b_1,b_2,b_3,b_4){b_2,b_3,b_4}
\tkzDrawPolygon[color=gray, dashed, fill opacity=0.2](b_1,b_2,b_3,b_4)
\tkzDefPoint(3,0){c_1}
\tkzDefPoint(-3,0){c_2}
\tkzDefPoint(0,3){d_1}
\tkzDefPoint(0,-3){d_2}
\tkzDefPoint(2.5,0){A}
\tkzDefPoint(-2.5,0){B}
\tkzDrawLine[color=gray, dashed](A,B)
\tkzDefPoint(0,2.5){C}
\tkzDefPoint(0,-2.5){D}
\tkzDrawLine[color=gray, dashed](C,D)
\tkzDrawPoints[fill =black,size=10,color=black](a_1,a_3)
\tkzDrawPoints[fill =red,size=16,color=red](a_2,a_4)
\tkzDrawPoints[fill =iris,size=20,color=iris](a_5)
\tkzDrawPoints[fill =red, size=10,color=red](b_1,b_3)
\tkzDrawPoints[fill =asparagus, size=14,color=asparagus](b_2,b_4)
\tkzDrawPoints[size=15,fill = blue!50!black](c_1,c_2)
\tkzDrawPoints[size=15,fill = blue!50!black](d_1,d_2)
\end{tikzpicture}
\end{center}
\caption{A central configuration with $D_{2}$ symmetry (see \cite{Mo2} for the
existence of such configurations). }%
\end{figure}
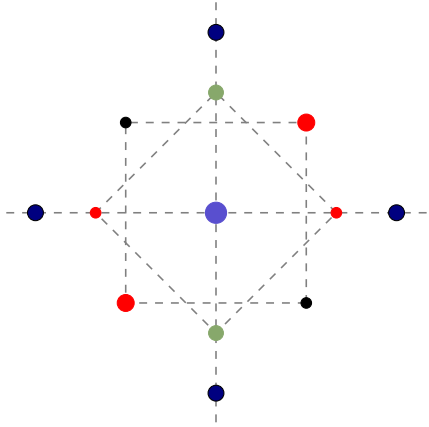
\end{itemize}

\subsection{Solutions in more dimensions (C3)}

For $d\geq2$ the symmetry group is $G=G_{1}\times G_{2}$ where $G_{1}%
=G_{2}=U(1)\times U(d-1)$. Since $a_{0}$ is in the plane $E_{0}=\Pi
:=\{(x,y,0,...,0)\}$, the group orbit of $x_{a}$ is identified with
\[
G(x_{a})=G_{1}(a_{0})\times G_{2}(a)
\]
where $G_{1}(a_{0})=\mathbb{S}^{1}$. Note that, in this case, $H_{a_{0}%
}=\left\{  e\right\}  \times U(d-1). $ By assumption $\mathbf{(C3)}$, the
central configuration $a$ lies in the subspace orthogonal to the plane,
$E_{1}=\Pi^{\perp}$, then $G_{2}(a)=\mathbb{S}^{2d-3}$. It follows that
\[
\mbox{Cat}_{H_{a_{0}}}\left(  G_{2}(a)\right)  =\mbox{Cat}\left(  pt\right)
=1.
\]

We can identify the critical orbit of $\Psi_{\varepsilon}^{\prime}(g,\phi
_{1}(g))$ with any element $g\in G_{2}$. Therefore, the critical point of
$\mathcal{A}(\xi+\eta)$ is given by
\[
u=\xi+\eta=(a_{0},ga)+\mathcal{O}_{X^{\Gamma}}(\varepsilon)\text{,}%
\]
where $\mathcal{O}_{X^{\Gamma}}(\varepsilon)$ is a function in $X^{\Gamma}$
such that $\left\Vert \mathcal{O}_{X^{\Gamma}}(\varepsilon)\right\Vert
_{X}\leq c\varepsilon$ for some constant $c$.

If the central configuration $a$ is non-degenerate, then the Hessian of the
amended potential $V$ is invertible in the orthogonal complement to the
tangent space to the orbit $G_{2}(a)$ in the fixed point space of $\Gamma$.

\begin{theorem}
\label{main result copy(1)} Assume conditions $\mathbf{(A)}$-$\mathbf{(B)}$
and $\mathbf{(C3)}$. Suppose that $a\in E^{n}$ is not-degenerate. Then, for
every $\varepsilon\in(0,\varepsilon_{0})$, the $N=n+1$-body problem has at
least one solutions $q(t)$ of the form
\begin{align}
q_{0}(t)  &  =\exp\left(  t\mathcal{J}\right)  u_{1}(\nu t)-m_{1}%
\varepsilon\exp(t\omega\mathcal{J})u_{0}(\nu t)\label{qqq}\\
q_{1}(t)  &  =\exp\left(  t\mathcal{J}\right)  u_{1}(\nu t)+m_{0}%
\varepsilon\exp(t\omega\mathcal{J})u_{0}(\nu t)\nonumber\\
q_{\ell}(t)  &  =\exp\left(  t\mathcal{J}\right)  u_{\ell}(\nu t),\qquad
\ell=2,...,n~,\nonumber
\end{align}
where $\omega=\pm\varepsilon^{-(\alpha+1)/2}$, $\nu=\omega-1$, $u_{0}%
(s)=a_{0}+\mathcal{O}_{X^{\Gamma}}(\varepsilon)$ and $u_{\ell}(s)=ga_{\ell
}+\mathcal{O}_{X^{\Gamma}}(\varepsilon)$ with $g\in\left\{  e\right\}  \times
U(d-1)\subset U(d)$. Furthermore, in this case $u_{0}(s)$ and $u_{\ell}(s)$
have the symmetries $u_{0}(s)=-\mathcal{R}u_{0}(s+\pi)$ and $u_{\ell
}(s)=\mathcal{R}u_{\ell}(s+\pi)$.
\end{theorem}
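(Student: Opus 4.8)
The plan is to treat Theorem \ref{main result copy(1)} as the specialization of the general reduction scheme to the symmetric setting fixed by assumption $\mathbf{(C3)}$, and then to read off the solution and its symmetries from the chain of coordinate changes. Since the hypotheses $\mathbf{(A)}$-$\mathbf{(B)}$ and $\mathbf{(C3)}$ are in force and $a$ is non-degenerate, Theorem \ref{main category} applies directly: for each $\varepsilon\in(0,\varepsilon_{0})$ the number of $H_{a_{0}}$-orbits of critical points of the regular functional $\Psi_{\varepsilon}^{\prime}:\mathcal{V}^{\prime}\to\mathbb{R}$ is bounded below by $\mathrm{Cat}_{H_{a_{0}}}(G_{2}/K)$, and each such orbit contains an element $\xi^{\prime}=g\cdot a+\mathcal{O}_{E^{\prime}}(\varepsilon)$ with $g\in G_{2}$. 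So the only genuinely new input is the value of the equivariant category in this case.

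First I would pin down the groups and orbits. Under $\mathbf{(C3)}$ one has $G_{2}=U(1)\times U(d-1)$ with the $U(1)$ factor acting on $\Pi$ and trivially on $\Pi^{\perp}$, while $a\in E^{\prime}=(\Pi^{\perp})^{n}$; the stabiliser is $K=(G_{2})_{a}=U(1)\times U(d-2)$, so that $G_{2}/K\cong U(d-1)/U(d-2)=\mathbb{S}^{2d-3}=G_{2}(a)$, as recorded before the statement. The stabiliser $H_{a_{0}}=\{e\}\times U(d-1)$ acts on $G_{2}/K$ by left translation, and since $U(d-1)$ acts transitively on the unit sphere $\mathbb{S}^{2d-3}\subset\mathbb{C}^{d-1}=\Pi^{\perp}$, the whole manifold $G_{2}/K$ is a single $H_{a_{0}}$-orbit. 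Being $H_{a_{0}}$-equivariantly contractible onto itself, it is covered by one $H_{a_{0}}$-categorical set, hence $\mathrm{Cat}_{H_{a_{0}}}(G_{2}/K)=1$. Theorem \ref{main category} then yields at least one $H_{a_{0}}$-orbit of critical points $\xi^{\prime}$ of $\Psi_{\varepsilon}^{\prime}$.

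Next I would reconstruct the solution of the $N$-body problem. By Theorems \ref{main regular} and \ref{thm:Lyapunov-Schmidt reduction}, the critical point $\xi^{\prime}$ lifts to a critical point $x=\xi+\eta\in X^{\Gamma}$ of $\mathcal{A}$, with $\xi=(a_{0},ga)+\mathcal{O}_{X_{0}^{\Gamma}}(\varepsilon)$ and $\eta=\varphi_{\varepsilon}(\xi)=\mathcal{O}_{X^{\Gamma}}(\varepsilon)$ by Lemma \ref{lemma: estimate phi}; thus $(u_{0},u)=(a_{0},ga)+\mathcal{O}_{X^{\Gamma}}(\varepsilon)$ with $g\in\{e\}\times U(d-1)$. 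Undoing the time and space scaling $v_{0}(t)=\varepsilon\exp((\omega-1)\mathcal{J}t)u_{0}(\nu t)$, $v_{\ell}(t)=u_{\ell}(\nu t)$, then the rotating-like coordinates $Q_{j}(t)=\exp(t\mathcal{J})v_{j}(t)$, and finally the Jacobi-like relations $q_{0}=Q_{1}-m_{1}Q_{0}$, $q_{1}=Q_{1}+m_{0}Q_{0}$ and $q_{\ell}=Q_{\ell}$, produces exactly the components \eqref{qqq} with $\omega=\pm\varepsilon^{-(\alpha+1)/2}$ and $\nu=\omega-1$. The stated symmetries $u_{0}(s)=-\mathcal{R}u_{0}(s+\pi)$ and $u_{\ell}(s)=\mathcal{R}u_{\ell}(s+\pi)$ are precisely the identities $\zeta x=x$ for $\Gamma=\langle\zeta\rangle$, which hold automatically because the construction lives in the fixed-point space $X^{\Gamma}$.

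I expect the only delicate point to be verifying that the abstract non-degeneracy of $a$ in $E^{n}$ (Definition \ref{non degenerate cc}) supplies the hypothesis actually needed to invoke Theorem \ref{main category}, namely invertibility of $\nabla^{2}V(a)$ transverse to $T_{a}G_{2}(a)$ inside the restricted space $E^{\prime}=(\Pi^{\perp})^{n}$. This is exactly the content of Remark \ref{remark: non-deg}: one uses that the full orbit $U(d)(a)$ meets $(\Pi^{\perp})^{n}$ precisely in the $G_{2}$-orbit $G_{2}(a)$, so that the zero eigenspace of $\nabla^{2}V(a)$ restricted to $E^{\prime}$ reduces to $T_{a}G_{2}(a)$ and the Hessian is non-singular on its complement. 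With this established, every remaining step is a direct specialization of the already-proved machinery.
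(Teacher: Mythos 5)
Your proposal is correct and follows essentially the same route as the paper: Section 5.3 of the paper likewise identifies $G_{2}(a)=\mathbb{S}^{2d-3}$, $H_{a_{0}}=\{e\}\times U(d-1)$, notes $\mbox{Cat}_{H_{a_{0}}}(G_{2}(a))=\mbox{Cat}(pt)=1$ via the transitive $H_{a_{0}}$-action, invokes Theorem \ref{main category} (with the non-degeneracy hypothesis supplied exactly as in Remark \ref{remark: non-deg}), and reconstructs $q(t)$ by undoing the same chain of coordinate changes, with the symmetries of $u_{0},u_{\ell}$ coming from membership in $X^{\Gamma}$. Your added details (the stabiliser computation $K=U(1)\times U(d-2)$ and the explicit Jacobi/rotating/scaling inversions) only make explicit what the paper leaves implicit.
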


\vspace{0.5cm}
\begin{minipage}[t]{7cm}
MF:  {marine.fontaine.math@gmail.com}\\
{\tt Departement Wiskunde-Informatica \\
Universiteit Antwerpen \\
2020 Antwerpen, BE.}

\end{minipage}
\hfill
\begin{minipage}[t]{7cm}
CGA:  {cgazpe@mym.iimas.unam.mx}\\
{\tt Depto. Matem\'{a}ticas y Mec\'{a}nica IIMAS \\
Universidad Nacional Aut\'onoma de M\'exico,
Apdo. Postal 20-726, Ciudad de M\'exico, MX.}
\end{minipage}
\end{document}